\documentclass[a4paper,11pt,reqno]{amsart}
\usepackage[left=26mm,right=26mm,top=30mm,bottom=30mm]{geometry}

\usepackage{amssymb}
\usepackage[shortlabels]{enumitem}
\usepackage{bm}
\usepackage{mathtools}
\usepackage{mathrsfs}
\usepackage{array} 
\usepackage{xcolor} 
\usepackage{xparse}
\usepackage[sort&compress,numbers]{natbib}
\usepackage{caption}
\usepackage{subcaption}
\usepackage{booktabs}
\usepackage{hyperref}
\usepackage{bbm}
\def\1{\mathbbm{1}}

\def\A{{\mathbb{A}}}

\def\ka{\kappa}

\def\la{\lambda}

\def\t{\tau}

\def\calB{{\calB}}

\def\calT{{\mathcal{T}}}

\def\calB{{\mathcal{B}}}

\def\calA{{\mathcal{A}}}

\def\R{\mathbf{R}}

\def\C{\mathbf{C}}

\def\F{\mathbb F}

\def\sub{\subseteq}

\newtheorem{theorem}{Theorem}[section]
\theoremstyle{plain}
\newtheorem{definition}{Definition}[section]
\newtheorem{lemma}{Lemma}[section]
\newtheorem{proposition}{Proposition}[section]
\newtheorem{corollary}{Corollary}[section]
\newtheorem{remark}{Remark}[section]
\numberwithin{equation}{section}

\newtheorem{mainassumptions}{Assumption}[section]

\makeatletter
\newcommand\makebig[2]{%
\@xp\newcommand\@xp*\csname#1\endcsname{\bBigg@{#2}}%
\@xp\newcommand\@xp*\csname#1l\endcsname{\@xp\mathopen\csname#1\endcsname}%
\@xp\newcommand\@xp*\csname#1r\endcsname{\@xp\mathclose\csname#1\endcsname}%
}
\makeatother

\makebig{biggg} {3.0}
\makebig{Biggg} {3.5}
\makebig{bigggg}{4.0}
\makebig{Bigggg}{4.5}
\makebig{biggggg}{5}
\makebig{Biggggg}{5.5}
\makebig{bigggggg}{6}
\makebig{Bigggggg}{6.5}

\makeatletter
\newcommand{\doublehat}[1]{%
\begingroup%
\let\macc@kerna\z@%
\let\macc@kernb\z@%
\let\macc@nucleus\@empty%
\hat{\mathchoice%
{\raisebox{.2ex}{\vphantom{\ensuremath{\displaystyle #1}}}}%
{\raisebox{.2ex}{\vphantom{\ensuremath{\textstyle #1}}}}%
{\raisebox{.16ex}{\vphantom{\ensuremath{\scriptstyle #1}}}}%
{\raisebox{.14ex}{\vphantom{\ensuremath{\scriptscriptstyle #1}}}}%
\smash{\hat{#1}}}%
\endgroup%
}
\makeatother

\NewCommandCopy{\ordinaryexists}{\exists}
\RenewDocumentCommand{\exists}{}{\mathop{{}\ordinaryexists}}

\usepackage{tikz}
\usetikzlibrary{plotmarks,positioning,arrows}

\DeclareRobustCommand\full {\tikz[baseline=-0.6ex]\draw[very thick] (0,0)--(0.5,0);}

\tikzset{%
 block/.style={draw, fill=white, rectangle, 
 minimum height=2em, minimum width=3em},
 input/.style={inner sep=0pt}, 
 output/.style={inner sep=0pt}, 
 sum/.style = {draw, fill=white, circle, minimum size=2mm, node distance=1.5cm, inner sep=0pt},
 pinstyle/.style = {pin edge={to-,thin,black}}
}
\begin{document}
\title{Exponential stability of abstract boundary-coupled positive systems}
\author{Yassine El Gantouh$^{\tt1,\ast}$}
\author{Mahyar Mahinzaeim$^{\tt2}$}

\thanks{
\vspace{-1em}\newline\noindent
{\sc MSC2020}: 34G10, 47B65, 47D06, 37L15, 35B35, 35M30
\newline\noindent
{\sc Keywords}: {boundary-coupled systems, positive systems, stability, linear systems, semigroup theory, resolvent positive operators}
\newline\noindent
$^{\tt1}$ School of Mathematics, Southwest Jiaotong University, Chengdu, 611756, Sichuan, China.
\newline\noindent
$^{\tt2}$ Research Center for Complex Systems, Aalen University, Germany.
\newline\noindent
{\sc Emails}:
{\tt elgantouhyassine@gmail.com},~{\tt m.mahinzaeim@web.de}.
\newline\noindent
$^{\ast}$ Corresponding author.
}

\begin{abstract}
In this paper, we study the well-posedness and exponential stability of a class of abstract boundary-coupled positive systems. Within a general semigroup framework, we establish well-posedness in terms of existence, uniqueness, and regularity of solutions. More concretely, we derive some simple and readily verifiable tests on the boundary coupling operators for the verification of the well-posedness of the coupled system and, at the same time, provide spectral criteria that guarantee exponential stability of the solutions. The analysis is based on two complementary approaches: the feedback-theoretic method and perturbation techniques for resolvent positive operators on Banach lattices (i.e., Banach spaces equipped with a compatible order structure). This unified framework allows for a systematic treatment of a broad class of boundary couplings in abstract dynamical systems. Several examples are presented to illustrate the applicability of the theoretical results.
\end{abstract}
\maketitle

\pagestyle{myheadings} \thispagestyle{plain} \markboth{\sc Y.\ EL Gantouh, M.\ Mahinzaeim}{\sc abstract boundary-coupled positive systems}

\section{Introduction}

We consider in this paper a class of dynamical systems which can be described abstractly by
\begin{equation}\label{dynamic-boundary}
\dot{z}_1(t)={{A}}_1 z_1(t),\quad \dot{z}_2(t)={{A}}_2 z_2(t),\quad t>0,
\end{equation}
with boundary and initial conditions
\begin{gather}\label{dynamic-boundary2}
G_1 z_1(t)= K_2 z_2(t),\quad G_2 z_2(t)=K_1 z_1(t),\quad t>0,\quad z_1(0)=x_1,\quad z_2(0)=x_2;
\end{gather}
here and in what follows dots denote (partial) differentiation with respect to time $t$ as usual. For $k=1,2$, we let $X_k$ and $U_k$ be infinite-dimensional complete normed vector spaces (or Banach spaces) over the same field -- real or complex -- and we let the initial points $x_k\in X_k$, the mappings $A_k:\bm{D}({{A}}_k)\subset X_k\to X_k$ be closed generally unbounded linear operators with domains of definition $\bm{D}({A}_k)$ dense in $X_k$, and $ G_1:\bm{D}({A}_1)\to U_1$, $K_1: \bm{D}({A}_1)\to U_2$, $G_2:\bm{D}({A}_2)\to U_2$, $K_2: \bm{D}({A}_2)\to U_1$ be linear boundary operators which express the coupling between the two subsystems in \eqref{dynamic-boundary}. In this way \eqref{dynamic-boundary} and \eqref{dynamic-boundary2} together constitute an abstract boundary-coupled system for which we use the acronym $\mathbf{aBCS}$ hereafter. Of particular interest in this study are the well-posedness and stability -- asymptotic or long-time behavior -- analysis of the (solutions of the) $\mathbf{aBCS}$, and we are very much interested in how the coupling between the two subsystems affects the exponential stability analysis (to be defined later). The boundary conditions $G_1 z_1(t)= K_2 z_2(t)$ and $G_2 z_2(t)=K_1 z_1(t)$ above will frequently be referred to in what follows in this paper as the \textit{coupling conditions}.

Many coupled or networked problems that arise in applied mathematics and mathematical physics can be represented by abstract dynamical systems in the form of the $\mathbf{aBCS}$. These include transport equations with delay describing population dynamics of the types considered in \cite{MR4466961,MR3343578}, abstract boundary delay equations introduced in \cite{GANTOUH2026106307}, elliptic and parabolic partial differential equations of possibly quasilinear type \cite{MR1233197,MR1095411}, coupled parabolic and hyperbolic equations describing heat transfer in fluid-solid systems including the semilinear and quasilinear types in \cite{MR1025453,MR4876903,MR4115854}, and linear hyperbolic partial differential equations of possibly higher order describing the wave-like dynamics of elastic systems \cite{MR2105338,MR2046319,MR2421329}. In most of these papers the boundary conditions and couplings are dynamic in nature, by which is meant they contain time derivatives representing damping or inertia coupling, and a number of different assumptions are made concerning these. For example, in \cite{MR3343578}, the well-posedness and exponential stability of the $\mathbf{aBCS}$ when $K_2$ is the identity on $X_2$ were studied. Similar results were also derived in \cite{MR4466961,Boujijane2024} for the case $K_2$ unbounded and $K_1$ bounded. Most recently in \cite{MR4876903}, the authors obtained results on the well-posedness and strong stability of the $\mathbf{aBCS}$ including a certain subclass of passive systems in Hilbert spaces. In particular, they derived explicit resolvent bounds and decay constants for the $\mathbf{aBCS}$. It is the results of \cite{MR4876903} that provide the motivation for the investigation in the paper at hand and that we wish to extend to the case of exponential stability of positive systems in Banach lattices. 

Many systems arising in physics, biology, chemistry, and economics exhibit the natural property that solutions corresponding to positive initial conditions remain positive for all time. Moreover, the concept of positivity in infinite-dimensional systems which are linear and time-invariant makes it possible to encompass coupled physical systems whose semigroup representations imply well-posedness and satisfy a positivity condition which is strong enough to guarantee not only that solutions are always positive, but also, more importantly, that the generated semigroup satisfies the spectrum-determined growth condition (to be defined later). Note that all positive semigroups on spaces of continuous functions and on $\bm{L}_p$-spaces for $p \in [1,\infty)$ have this latter property. The most general results in this regard, obtained by Banach space methods, were established in \cite{MR839450,MR1273529,MR1469440}. For such systems, we can therefore derive a range of interesting exponential stability results. A recent overview of developments in this area is given in \cite{MR4841783} (see also the earlier survey \cite{MR1060534}). Other contributions to the theory of linear time-invariant infinite-dimensional positive systems can be found in \cite{MR4910475,MR3616245,Alessio2026,EZZL,elgantouh2025admissibility,MR4635044,GANTOUH2026106307,Schanbacher1989,MR4711370,Wint}.

The first task in the analysis of the $\mathbf{aBCS}$ is to establish well-posedness via operator-theoretic methods in terms of strongly continuous semigroups of bounded linear operators -- $C_0$-semigroups in short -- on Banach spaces. We propose two complementary perturbation-based approaches to address this issue in the present paper. The first approach is rooted in the feedback theory of positive linear time-invariant infinite-dimensional systems that are regular in the sense of \cite{MR2154892,MR1359020}. Within this framework, we show that the $\mathbf{aBCS}$, when formulated as an abstract initial/boundary-value problem on a suitable Banach state space, has the required properties of existence, uniqueness, and regularity of its solutions. These results are established using a modified version of the Weiss–Staffans perturbation theorem recently obtained in \cite{GANTOUH2026106307}. We derive an explicit convolution formula on the space considered, which enables the definition of solutions to the associated inhomogeneous problem. In addition, we discuss the asymptotic behavior of the $\mathbf{aBCS}$, and we prove that the resulting positive $C_0$-semigroup is exponentially stable if and only if each coupled subsystem is exponentially stable and the transfer function of the overall system has a spectral radius strictly less than one. The second approach is more direct and relies solely on generation and stability results for domain perturbations of positive semigroup generators \cite{MR4711370}. In particular, we show that we can recover the same well-posedness and stability conclusions as those obtained via the first method, provided each of the decoupled subsystems satisfies suitable resolvent-inverse estimates on the positive semiaxis. For related results on the generation under domain perturbations of positive semigroup generators, we refer to \cite{Barbieri,Alessio2026,MR4386712,MR4711370,GANTOUH2026106307,Gantouh2026,PiMa,MR4635044}.

We note that our results will be proven in the context of positive mild solutions under relatively weak assumptions on the system dynamics when compared with other approaches, e.g.\ see \cite{MR3343578,GANTOUH2026106307,MR4466961}. Additionally, we obtain spectral criteria for well-posedness and exponential stability. Hence, the theory of this study is complete in the sense that it addresses the basic questions concerning well-posedness and exponential stability which arise in coupled physical systems and which are important in applications. Indeed, the reader will see that a number of application examples can be made of the theory with little extra effort when compared with the efforts usually required to obtain the well-posedness and exponential stability results (see \cite{MR3624395,Boujijane2024,GANTOUH2026106307,MR3343578,MR4466961}).

\subsection{Contributions and organization}

The contributions of this paper can be summarized as follows. We present a perturbation-based operator-theoretic framework for establishing well-posedness and exponential stability results for the $\mathbf{aBCS}$. While the paper \cite{MR4876903} focused on operator-theoretic formulations in Hilbert space, leading to results for contraction $C_0$-semigroups, we treat the problem in the more general setting of Banach spaces. This generalization is essential for studying diffuse boundary couplings in the $\mathbf{aBCS}$, and thus directly links to the underlying physical situation. Our approach hinges on the inherent positivity of the system dynamics, which serves a dual purpose: first, it relaxes the regularity requirements on the internal dynamics of each subsystem; second, it allows us to exploit the powerful theory of positive semigroups on Banach lattices. The resulting well-posedness and exponential stability results are ``sharp'', meaning that they cannot be relaxed under the imposed assumptions on the coupled system, thereby extending prior findings in \cite{MR4711370,MR4466961,GANTOUH2026106307,MR4876903,MR3343578}. A key contribution of this paper is a spectral characterization for the well-posedness under verifiable conditions on the boundary coupling operators. In particular, we establish simple and readily verifiable tests for $C_0$-semigroup generation, i.e.\ for the well-posedness of the $\mathbf{aBCS}$ (Proposition \ref{sharp-wellposedness-cond}, Theorem \ref{Result1}, and also Remarks \ref{test-remark1} and \ref{test-remark2}). This characterization is derived from a complete description of the spectrum of the positive semigroup generator of the $\mathbf{aBCS}$ (Proposition \ref{Spectrum} and Remark \ref{rem.chara}). As a consequence, considerable emphasis is placed on application examples illustrating the theory.

The remainder of the paper is organized as follows. Some notation, definitions, and auxiliary results which will be used in our considerations are recalled in Section \ref{Sec:1}. Also introduced in that section is the abstract initial/boundary-value problem, which is used as a prototype for describing the $\mathbf{aBCS}$, together with a brief review of the regular systems approach to $C_0$-semigroup generation. In Section \ref{Sec:2}, results are established concerning the well-posedness and exponential stability of the $\mathbf{aBCS}$ in the Banach lattice setting using two complementary approaches: the feedback-theoretic method and perturbation techniques for resolvent positive operators on Banach lattices. Section \ref{Sec:3} presents several applications and physical system models which can be included within our framework. Finally, in Section \ref{Sec:4}, we draw some conclusions.

\section{Notation, terminology and preliminaries}\label{Sec:1}

In this section we introduce standard notation and terminology along with definitions and auxiliary results for the reader's convenience and use in later sections. Further details can be found (in a number of different forms) in the texts \cite{MR2262133,MR423039,MR3616245,MR2154892,MR2502023,MR839450}, or in the papers \cite{MR4910475,MR1359020,GANTOUH2026106307,StaffansWeiss2002}.

\subsection{Notation and terminology}

In this paper, $\R$ and $\R_+$ denote the sets of real and nonnegative real numbers, and $\C$ is the set of complex numbers. Let $E$ and $F$ denote Banach spaces. We denote by $\calB(E,F)$ the space of bounded linear operators from $E$ to $F$ equipped with the strong operator topology; if $E=F$, we write $\calB(E)\coloneqq \calB(E,E)$. For an operator $P\in \calB(E,F)$ and a subspace $Z$ in $E$, we denote by $P\vert_{Z}$ the restriction of $P$ to $Z$, which is again a bounded operator. The notation $\operatorname{ran}P$ is used to denote the range of $P$. For any (real) $p\in [1,\infty)$, we use the notation $\bm{L}_p(\R_+;E)$ to denote the space of all strongly measurable, $p$-integrable $E$-valued functions. For each compact interval $[a,b]\subset \R$, we denote by $\bm{C}(a,b;E)$ the space of all continuous $E$-valued functions on $[a,b]$, and we denote by $\bm{W}^{1}_p(a,b;E)$ the Sobolev space of absolutely continuous $E$-valued functions on $[a,b]$ whose first-order derivatives lie in $\bm{L}_p(a,b;E)$.

Let $E\coloneqq (E,\le)$ be a Banach lattice, i.e.\ a partially ordered Banach space for which any given elements $x$, $y$ of $E$ have a supremum denoted by $x \vee y$ with respect to the partial ordering $\le$ and we denote by $\vert x\vert$ the absolute value defined by $\vert x\vert \coloneqq x\vee (-x)$ if $x\in E$. The following implications then hold:
\begin{enumerate}[label=\normalfont(\roman*)]
\item $x\leq y$ implies $x+z\le y+z $ and $\alpha x\leq \alpha y$ for all $\alpha\ge 0$ and any $x,y,z\in E$; and
\item $\Vert x\Vert \leq \Vert y\Vert$ whenever $\vert x\vert \leq \vert y\vert$.
\end{enumerate}
An element $x\in E$ is positive if $x\ge 0$. A positive cone in $E$ is denoted by $E_+$ (a notation also used for general ordered Banach spaces). The topological dual $E'$ of a Banach lattice $E$, under the usual dual norm and order, is also a Banach lattice. An operator $P\in \calB(E,F)$ is called positive if it maps $E_+$ into $F_+$, where $F$ is a Banach lattice generated by a cone $F_+$. The set of all such positive operators is denoted by $ \calB_+(E,F) $. This notation is also used when $E$ and $F$ are ordered Banach spaces. 

We will use the symbol $I$ generically to denote the identity operator on the corresponding space which will be clear from the context. Let $X$ be a Banach lattice and $A:\bm{D}({{A}})\subset X\to X$ be a closed densely defined linear operator generating a $C_0$-semigroup ${{T}}\coloneqq (T(t))_{t\in\R_+}$ on $X$. The spectral bound of ${{A}}$ is defined by $s({{A}})\coloneqq\sup\{\operatorname{Re}\lambda:\lambda\in\sigma({{A}})\}$, where $\sigma({{A}})$ denotes the spectrum of ${{A}}$, and the growth bound (type) of ${{T}}$ is defined by $\omega_0({{T}})\coloneqq\inf\{\omega\in\R:\text{$\exists M\ge 1$ such that $\|T(t)\|\le Me^{\omega t}$ for all $t\in\R_+$}\}$. It holds that $s({{A}})\le\omega_0({{T}})$ and $r(T(t))=e^{\omega_0({{T}})t}$, $t\in\R_+$, where $r(T(t))$ denotes the spectral radius of $T(t)$ (see, e.g., \cite[Proposition IV.2.2]{MR1721989}). A semigroup ${{T}}$ is said to be exponentially stable if its growth bound $\omega_0({{T}})<0$. If $s({{A}})=\omega_0({{T}})$, then we say that $T$ (or $A$) satisfies the spectrum-determined growth condition, and in this case the exponential stability of ${{T}}$ is obviously equivalent to the property $s({{A}})<0$. We let $X_{-1}^{{{A}}}$ denote the extrapolation space associated with $X$ and $A$, which is the completion of $X$ in the norm $\|x\|_{-1}^{{{A}}}\coloneqq \|R(\la,{{A}})x\|$, the resolvent operator $R(\la,{{A}})\coloneqq (\la I-{{A}})^{-1}$ for some arbitrary, but fixed, $\la\in\C$ in the resolvent set $\varrho({{A}})$ of ${{A}}$. Then we see that $X \subset X_{-1}^{{{A}}}$ and $X$ is dense and continuously embedded in $X_{-1}^{{{A}}}$. Furthermore, a $C_0$-semigroup ${{T}}$ extends uniquely to a $C_0$-semigroup ${{T}}_{-1}\coloneqq ( T_{-1}(t))_{t\in\R_+}$ on $X_{-1}^{{{A}}}$, whose infinitesimal generator ${{A}}_{-1}$ is an extension of ${{A}}$ to $X$, i.e.\ with domain $\bm{D}({{A}}_{-1})=X$; the semigroups ${{T}}$ and ${{T}}_{-1}$ are isomorphic to each other (see \cite[Section II.5]{MR1721989}). A semigroup ${{T}}$ on $X$ is called positive if $T(t)x\ge 0$ for all $t\in\R_+$ and for any $x\in X_+$. 

In our setting, $X$ is a Banach lattice with a natural positive cone $X_+$. However, it is not immediately clear how to extend this concept of positivity to the extrapolation space $X_{-1}^A$. Recall that if $A$ generates a positive $C_0$-semigroup on $X$, then we may define a positive cone in $X_{-1}^A$, denoted $(X_{-1}^A)_+$, as the closure of $X_+$ in the norm $\|\cdot\|_{-1}^{A}$ (see \cite[Definition 2.1]{MR3782646}). This definition ensures that $X_+\subset (X_{-1}^A)_+$. Furthermore, if $X$ is a real Banach lattice, then by \cite[Proposition 2.3]{MR3782646}
\begin{equation*}
X_+=X\cap (X_{-1}^{A})_+;
\end{equation*}
see more details in \cite[Section 2.2]{MR4910475}.

\subsection{Preliminary results on initial/boundary-value problems}\label{Sec:1.2}

Let ${X}$, ${U}$ be Banach spaces, called the \textit{state space} and \textit{boundary space}, respectively, and consider the following abstract initial/boundary-value problem on ${X}$:
\begin{equation}\label{dada}
\dot{z}(t)= {{A}} z(t),\quad{G}z(t)={K}z(t),\quad t>0,\quad z(0)=x.
\end{equation}
We will assume that ${{A}}:{Z}\sub {X}\to {X}$ is closed, ${Z}\sub {X}$ dense and continuously embedded in ${X}$, and $ {G},{K} :{Z}\to U$ are bounded linear boundary operators. 

In order to discuss the well-posedness of the initial/boundary-value problem \eqref{dada}, we first define the so-called \textit{system operator} $\mathcal{A}$ in $X$, which will also be used later in Section~\ref{Sec:2} in the statements of the main results; namely,
\begin{equation}\label{Big-A}
\calA x\coloneqq {A}x,\quad \bm{D}(\calA)= \left\{x\in {Z}:\: Gx =Kx\right\}.
\end{equation}
By standard semigroup theory, the system \eqref{dada} is well posed if and only if the system operator $\mathcal{A}$ generates a $C_0$-semigroup $\mathcal{T} \coloneqq (\mathcal{T}(t))_{t\in\R_+}$ on $X$. For this reason, we focus our attention in the sequel on the $C_0$-semigroup generation properties of $\mathcal{A}$. We impose the following two standard assumptions (which we assume throughout):
\begin{mainassumptions}\label{Assp11}
Consider the pair $({{A}},{G})$. The following assumptions hold:
\begin{enumerate}[label=\normalfont(\roman*)]
\item $\tilde{{{A}}}\coloneqq {{A}}\vert_{\ker {G}}$ generates a $C_0$-semigroup ${{T}}\coloneqq ({T}(t))_{t\in\R_+}$ on ${X}$; and
\item ${G}$ is onto.
\end{enumerate}
\end{mainassumptions}
Under Assumption \ref{Assp11} we have that the space ${Z}$ can be decomposed and related to the domain of $\tilde{{A}}$ as follows:
\begin{equation*}
{Z}=\ker {G} \oplus \ker ( \la I- {A} ),\quad \la\in \varrho(\tilde{{A}}).
\end{equation*}
To proceed, we will use the notion of a Dirichlet operator an define
\begin{equation*}\label{Dirichlet}
{D}_\la\coloneqq ({G}|_{\ker(\la I-{A})})^{-1}: {U}\to \ker(\la I-{A})\subseteq {X},\quad \la\in\varrho(\tilde{{A}}).
\end{equation*}
Hence, ${D}_\la$ exists and ${D}_\la\in\calB(U,X)$ (see \cite[Lemmas 1.2 and 1.3]{MR904952}), and we can relate the pair $(A,G)$ to a well-defined Dirichlet form by introducing the so-called \textit{boundary control operator}
\begin{equation*}\label{boundary-control}
{B}\coloneqq (\la I-\tilde{{A}}_{-1}){D}_\la\in\calB({U},{X}_{-1}^{\tilde{{A}}}),\quad \la\in \varrho(\tilde{{A}}).
\end{equation*}
Note that ${B}$ is independent of the choice of $\la$, $\operatorname{ran}{B}\cap {X}=\{0\}$, and that 
\begin{equation}\label{representation}
({A}-\tilde{{A}}_{-1})|_{Z}={B} {G}.
\end{equation}
Formally, we have the following definition. Throughout $X$ and $U$ are Banach lattices, and we put
\begin{equation*}
{C}\coloneqq {K}\vert_{\ker{G}},
\end{equation*}
the so-called \textit{observation operator}.
\begin{definition}\label{Well-posed-triple}
Let Assumption \ref{Assp11} be satisfied. Suppose that the operators $K$, $B$, and $T$ are positive. Then, for any $p\in [1,\infty)$, we say that the triple $(\tilde{{A}},{B},{C})$ is a positive $\bm{L}_p$-well-posed triple on $({U},{X},{U})$ if the following statements hold:
\begin{enumerate}[\normalfont(1)]
\item The pair $(\tilde{{A}}, {B})$ is positive $\bm{L}_p$-admissible, i.e., for some (and, hence, for all) $t>0$ and $u\in \bm{L}_{p,+}(0,t;{U})$, the input map corresponding to $(\tilde{{A}}, {B})$ is defined by
\begin{equation*}
\Phi_{t}u\coloneqq \int_{0}^{t} T_{-1}(t-r){B}u(r)dr\in {X}_+.
\end{equation*}
\item The pair $(C,\tilde{A})$ is positive $\bm{L}_p$-admissible, i.e., for some (and, hence, for all) $t >0$, there exists $\gamma\coloneqq \gamma(t)>0$ such that for any $x\in \bm{D}_+(\tilde{{A}})$,
\begin{equation*}
\Vert {C} T(\cdot)x\Vert_{\bm{L}_{p}(0,t;{U})}\leq \gamma^p\Vert x\Vert_X^p. 
\end{equation*}
\item For every $t>0$ there exists $\eta\coloneqq \eta(t)>0 $ such that for any $ u\in \mathring{\bm{W}}^{1}_{p,+}(0,t;{U})\coloneqq \{ u\in \bm{W}^{1}_{p,+}(0,t;{U}):\, u(0)=0\}$,
\begin{equation*}
\Vert{K} \Phi_\cdot u\Vert_{\bm{L}_{p}(0,t;{U})}\leq \eta \Vert u\Vert_{\bm{L}_{p}(0,t;{U})}.
\end{equation*}
\end{enumerate}
\end{definition}
The implications of the definition call for some comments. Consider the operator
\begin{equation*}\label{input.output.operator}
(\F u)(r)\coloneqq {K}\Phi_r u
\end{equation*}
for any $u\in \mathring{\bm{W}}^{1}_{p,+}(0,t;{U})$, $r\in [0,t]$, and $t\in\R_+$. If the triple $(\tilde{{A}},{B},{C})$ is a positive $\bm{L}_p$-well-posed triple, then $\F$ extends uniquely to a positive bounded linear operator on $\bm{L}_{p}(0,t;{U})$, still denoted by $\F$. Moreover, there exists a unique positive analytic $\calB({U})$-valued function $\mathbf{H}$, defined on $\C_\alpha\coloneqq\left\{\lambda\in \C: \operatorname{Re}\lambda > \omega_0(T)\right\}$, which is given by
\begin{equation}\label{transfer.fct}
\mathbf{H}(\lambda) = K R(\lambda,\tilde{A}_{-1}) B
\end{equation}
and is called the \textit{transfer function} of the triple $(\tilde{A},B,C)$. Since $K$ is an extension of the observation operator $C$, which is generally not unique, the transfer function $\mathbf{H}$ is defined only up to such an extension. Therefore, as all resulting functions are analytic, we do not distinguish between different representations of $\mathbf{H}$ (see, e.g., \cite[Section 4.6]{MR2154892} or \cite[Section 3]{StaffansWeiss2002}).

To better capture the role of the transfer function $\mathbf{H}$ in our well-posedness analysis, we introduce the following subclasses of $\bm{L}_p$-well-posed linear systems.
\begin{definition}\label{regularity}
Under the assumptions and conditions of Definition \ref{Well-posed-triple}, the triple $(\tilde{{A}},{B},{C})$ is called a positive $\bm{L}_p$-well-posed triple with zero feedthrough which is of weakly, strongly, or uniformly regular type if, respectively, we have for any $v\in {U}$, $u^*\in {U}'$, $\lim_{\la\to +\infty }\langle\mathbf{H}(\la)v,u^*\rangle_{U,U'}=0$, for any $u\in {U}$, $\lim_{\la\to +\infty }\Vert \mathbf{H}(\la)u\Vert=0$, or $\lim_{\la\to +\infty }\Vert\mathbf{H}(\la)\Vert_{\calB({U})}=0$, all referring to $\la\in\R$.
\end{definition}

In the strongly regular case, the extension of $\F$ admits the representation
\begin{equation*}
(\F u)(r)=C_\Lambda \Phi_r u
\end{equation*}
for any $u\in \bm{L}_{p}(0,t;{U})$, a.e.\ $r\in [0,t]$, and $t\in \R_+$. This extension is referred to as the (extended) \textit{input/output map} of the triple $(\tilde{A},B,C)$, e.g.\ see \cite[Section 3.3]{GANTOUH2026106307}. Here $C_\Lambda $ denotes the Yoshida extension of $C$ with respect to $\tilde{A}$, defined by
\begin{equation*}
C_\Lambda x := \lim_{\la \to +\infty}C\la R(\la,\tilde{A})x,\quad x\in \bm{D}(C_\Lambda):= \{x\in X: \text{$\lim_{\la \to +\infty}C\la R(\la,\tilde{A})x$ exists in $U$}\}.
\end{equation*}

We will also need the following concept of an \textit{admissible positive feedback operator}, related to Definition \ref{Well-posed-triple}.
\begin{definition}\label{Definition-admissible}
The identity operator $I$ on $U$ is called an admissible positive feedback operator for a positive $\bm{L}_p$-well-posed triple $(\tilde{{A}},{B},{C})$ with corresponding input/output map $\F$ if the inverse of $I-\F$ exists and is positive as an operator on $\calB(\bm{L}_p(0,\t;{U}))$ for some $\t>0$.
\end{definition}

We conclude this section with a generation result under domain perturbations of positive semigroup generators, which simultaneously establishes both the well-posedness and positivity of the initial/boundary-value problem \eqref{dada} (see \cite[Theorem A.1]{GANTOUH2026106307}). We state the result as a lemma.
\begin{lemma}\label{theorem-ABC}
Suppose that the triple $(\tilde{{A}},{B},{C})$ is a positive $\bm{L}_p$-well-posed weakly regular triple with zero feedthrough in the sense of Definition \ref{regularity} and $I$ is an admissible positive feedback operator in the sense of Definition \ref{Definition-admissible}. Then the system operator $\calA$ generates a positive $C_0$-semigroup $\calT\coloneqq (\calT(t))_{t\in\R_+}$ on $ {X} $ which can be represented in the form 
\begin{equation*}\label{variation}
\calT(t)x=T(t)x+\int_{0}^{t}{T}_{-1}(t-r){B}{C}_{\Lambda}\calT(r)x dr,\quad t\in\R_+,
\end{equation*}
for any $x\in {X}$. The initial/boundary-value problem \eqref{dada} admits a unique positive mild solution $z\in \bm{C}(\R_+;{X})$ which can can be defined by $\R_+\ni t\mapsto z(t)\coloneqq\calT(t)x$ for any $x\in X$. Furthermore, we have for the spectral bound of $\calA$
\begin{equation}\label{spectral-bound}
s(\calA)\ge s(\tilde{{A}}).
\end{equation}
\end{lemma}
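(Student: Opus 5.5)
The plan is to obtain the generation and representation statements from the Weiss--Staffans feedback theory for well-posed triples, adapted to the positive setting as in \cite[Theorem A.1]{GANTOUH2026106307}. The spectral bound inequality \eqref{spectral-bound} will then follow from a resolvent comparison argument for positive operators.

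First, since $(\tilde{A},B,C)$ is $\bm{L}_p$-well-posed and $I-\F$ is invertible on $\bm{L}_p(0,\t;U)$, the closed-loop system under the feedback $u=y$ is well posed. Its state map defines a $C_0$-semigroup $\calT$ on $X$, which satisfies the variation of constants formula
\[
\calT(t)x=T(t)x+\Phi_t\big(C_\Lambda\calT(\cdot)x\big).
\]
Weak regularity is what allows the output to be written as $C_\Lambda\calT(\cdot)x$ for a.e.\ $t$, with $\calT(t)x\in\bm{D}(C_\Lambda)$.

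To identify the generator with $\calA$, I would compute the Laplace transform for $\operatorname{Re}\la$ large. This gives
\[
R(\la,\calA_{cl})=R(\la,\tilde{A})+R(\la,\tilde{A}_{-1})B\,(I-\mathbf{H}(\la))^{-1}C_\Lambda R(\la,\tilde{A}).
\]
Let $x$ lie in the range of this operator. Then $x=x_0+D_\la v$ with $x_0\in\bm{D}(\tilde{A})$, since $R(\la,\tilde{A}_{-1})B=D_\la$. Using $Gx_0=0$, $GD_\la=I$ and $KD_\la=\mathbf{H}(\la)$ (via \eqref{transfer.fct} and \eqref{representation}), one checks $Gx=Kx$, so the generator is contained in $\calA$. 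For the reverse inclusion, $\la I-\calA$ is injective for large $\la$ because $I-\mathbf{H}(\la)$ is invertible. Hence the two operators coincide.

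For positivity, write $(I-\F)^{-1}=\sum_n\F^n$ or use its assumed positivity. Since $T$, $\Phi_t$ and $\F$ are all positive, a Picard iteration of the fixed-point equation above (equivalently $\calT(t)x=T(t)x+\Phi_t(I-\F)^{-1}\F$-type terms acting on $CT(\cdot)x$) gives $\calT(t)X_+\subset X_+$. The mild solution is then $z(t)=\calT(t)x$. Uniqueness follows from the uniqueness of the closed-loop solution, i.e.\ from the invertibility of $I-\F$.

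For \eqref{spectral-bound}, the resolvent formula with positive $B$, $K$ and $(I-\mathbf{H}(\la))^{-1}$ (positive for large real $\la$) gives $R(\la,\calA)\ge R(\la,\tilde{A})\ge0$ for large real $\la$. Both semigroups are positive, so for real $\la>s(\calA)$ the resolvent $R(\la,\calA)$ is positive and given by the Laplace integral. Suppose $s(\calA)<s(\tilde{A})$. Then the domination $0\le R(\la,\tilde{A})\le R(\la,\calA)$ would propagate down to $\la\downarrow s(\calA)$, contradicting the blow-up $\|R(\la,\tilde{A})\|\to\infty$ as $\la\downarrow s(\tilde{A})$. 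The blow-up holds because $s(\tilde{A})\in\sigma(\tilde{A})$ for positive semigroups whenever $s>-\infty$.

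The main obstacle is propagating the domination from large $\la$ down to all real $\la>s(\calA)$. I would handle this with the Neumann series of the resolvent in powers of $(\mu-\la)$, which preserves the ordering term by term, as in standard Pringsheim/Landau-type arguments for resolvent positive operators.
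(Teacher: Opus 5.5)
Your proposal is correct and takes essentially the paper's route. The paper gives no independent argument: it simply invokes the positive Weiss--Staffans closed-loop theorem \cite[Theorem A.1]{GANTOUH2026106307}, and your three steps are a faithful unpacking of that result, namely the resolvent identification of the closed-loop generator with $\calA$, the positivity of $\calT(t)x=T(t)x+\Phi_t(I-\F)^{-1}C_\Lambda T(\cdot)x$, and the domination $0\le R(\la,\tilde{A})\le R(\la,\calA)$ giving $s(\calA)\ge s(\tilde{A})$. One justification needs tightening: weak regularity by itself only yields the representation with the weak Yosida extension of $C$, and you recover $C_\Lambda$ here because $\la\mapsto\mathbf{H}(\la)v=KD_\la v$ is positive and decreasing for $v\ge 0$, so weak convergence to $0$ upgrades to norm convergence, i.e.\ the triple is in fact strongly regular.
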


\section{Main results}\label{Sec:2} 

In this section we present our main results on the well-posedness and exponential stability of the $\mathbf{aBCS}$. The analysis proceeds along two complementary approaches. While the first approach is based on the feedback-theoretic framework presented in Section \ref{Sec:1.2}, the second approach relies on perturbation techniques for the so-called \textit{resolvent positive operators} as developed in \cite{MR4711370} (appropriate definitions will be given later). These approaches will be loosely termed in the sequel the ``feedback theory approach'' and the ``perturbation approach'', respectively.

Throughout we assume the spaces $X_k$ and $U_k$, $k=1,2$, to be Banach lattices. To write the $\mathbf{aBCS}$ as an abstract initial/boundary-value problem of the form \eqref{dada}, the state and boundary spaces are decomposed
\begin{equation*}
X = X_1\times X_{2}, \quad U= U_1 \times U_2
\end{equation*}
and are Banach lattices with the norms
\begin{equation*}
\left\|\left(\begin{matrix}
x_1\\
x_2
\end{matrix}\right)\right\|_{X}\coloneqq\|x_1\|_{X_1}+\|x_2\|_{X_1}, \quad \left\|\left(\begin{matrix}
u_1\\
u_2
\end{matrix}\right)\right\|_{U}\coloneqq\|u_1\|_{U_1}+\|u_2\|_{U_1}.
\end{equation*}
In terms of the state function
\begin{equation*}
t\mapsto z(t)=\left(\begin{matrix}
z_1(t)\\
z_2(t)
\end{matrix}\right),
\end{equation*}
the $\mathbf{aBCS}$ can then be written in the form \eqref{dada} with initial state $z(0)\coloneqq(x_1,x_2)^\top$, where the operators ${{A}}$, $G$, and $K$ are defined in $X$ with the same domain
\begin{equation*}
\bm{D}({{A}}_1)\times \bm{D}({{A}}_2)
\end{equation*}
by the block operator matrices
\begin{gather*}
{{A}}\coloneqq\left(\begin{matrix}
{{A}}_1 & 0 \\
0 & {{A}}_2
\end{matrix}\right),\quad G\coloneqq\left(\begin{matrix}
G_1 & 0 \\
0 & G_2
\end{matrix}\right),\quad K\coloneqq\left(\begin{matrix}
0 & K_1 \\
K_2 & 0
\end{matrix}\right).
\end{gather*}
Therefore, the system operator $\mathcal{A}$ in $X$ is as defined in \eqref{Big-A}, with $Z=\bm{D}(A)\coloneqq\bm{D}({{A}}_1)\times \bm{D}({{A}}_2)$ used in the definition of $\bm{D}(\mathcal{A})$.

\subsection{Feedback theory approach}

Before we proceed to state the main results of this subsection we recall that Assumption \ref{Assp11} mentioned in the context of the pair $(A,G)$ continues to hold for $(A_k,G_k)$, $k=1,2$. Thus, in line with Assumption \ref{Assp11}, we make the following assumptions.
\begin{mainassumptions}\label{Assp12}
Consider the pair $(A_k,G_k)$, $k=1,2$. For $k=1,2$, the following assumptions hold:
\begin{enumerate}[label=\normalfont(\roman*)]
\item $\tilde{{A}}_k\coloneqq {A}_k\vert_{\ker {G}_k}$ generates a positive $C_0$-semigroup ${{T}}_k\coloneqq ({T}_k(t))_{t\in\R_+}$ on ${X}_k$; and
\item ${G}_k$ is onto.
\end{enumerate}
\end{mainassumptions}

With Assumption \ref{Assp12} it can be directly seen from the paragraph following the statement of Assumption \ref{Assp11} that for $k=1,2$ there are Dirichlet and boundary control operators 
\begin{equation*}
D_\la^k\in \calB(U_k,X_k),\quad B_k\in\calB(U_k,(X_k)_{-1}^{\tilde{A}_k})
\end{equation*}
corresponding to the pairs $(A_k,G_k)$ which can be defined, respectively, by
\begin{equation*}
D_\la^k\coloneqq (G_k|_{\ker(\la I-\tilde{A}_k)})^{-1},\quad B_k\coloneqq (\la I-(\tilde{A}_k)_{-1})D_\la^k,\quad \la\in \varrho(\tilde{A}_k).
\end{equation*}
Thus there are again corresponding well-defined Dirichlet forms. Let us, as before, put
\begin{equation*}
C_k\coloneqq K_k\vert_{\ker{G}_k},\quad k=1,2.
\end{equation*}
Bringing Assumption \ref{Assp12} and the definitions above together, we now are in a position to state our first main result, which essentially guarantees the existence of unique positive solutions of the $\mathbf{aBCS}$. The proof which we give is a direct application of Lemma \ref{theorem-ABC}. 
\begin{theorem}\label{theorem-dynamic-boundary}
Let Assumption \ref{Assp12} be satisfied and assume that $K_k$ and $B_k$, $k=1,2$, are positive operators. Suppose that the triples $(\tilde{A}_1,B_1, C_1)$, $(\tilde{A}_2,B_2,C_2)$ with corresponding input/output maps $\F_{1}$ and $\F_2$, respectively, are positive $\bm{L}_p$-well-posed weakly regular triples with zero feedthrough in the sense of Definition \ref{regularity}. If the spectral radius $r(\F_1\F_2)<1$ or, equivalently, $r(\F_2\F_1)<1$, then the system operator $\calA$ generates a positive $C_0$-semigroup $\calT\coloneqq (\calT(t))_{t\in\R_+}$ on $X$ and \eqref{dada} has a unique positive mild solution $z\in \bm{C}(\R_+;{X})$ for any $x\in X$ which can be represented in the form
\begin{equation}\label{eqwr4532}
{z}(t) =\calT(t)x,\quad t\in\R_+,
\end{equation}
and which gives the unique solutions of the $\mathbf{aBCS}$.
\end{theorem}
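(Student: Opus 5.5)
The plan is to apply Lemma \ref{theorem-ABC} to the product system on $X=X_1\times X_2$, $U=U_1\times U_2$, with the block operators $A$, $G$, $K$ introduced above. I first check Assumption \ref{Assp11} for $(A,G)$. Since $\ker G=\ker G_1\times\ker G_2$, we get $\tilde A=\operatorname{diag}(\tilde A_1,\tilde A_2)$. This generates the positive diagonal semigroup $T=\operatorname{diag}(T_1,T_2)$ on the lattice $X$. Moreover, $G$ is onto because $G_1$ and $G_2$ are.

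Next I identify the remaining ingredients. The Dirichlet operator is $D_\la=\operatorname{diag}(D^1_\la,D^2_\la)$, so $B=\operatorname{diag}(B_1,B_2)\in\calB(U,X^{\tilde A}_{-1})$ with $X^{\tilde A}_{-1}=(X_1)_{-1}^{\tilde A_1}\times(X_2)_{-1}^{\tilde A_2}$. The observation operator is the off-diagonal block
\begin{equation*}
C=K|_{\ker G}=\left(\begin{matrix}0&C_1\\ C_2&0\end{matrix}\right).
\end{equation*}
The operators $K$ and $B$ are positive because their blocks are; positivity of $B$ is understood with respect to the product cone in the extrapolation space, which is the closure of $X_+$. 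The input map is $\Phi_t=\operatorname{diag}(\Phi^1_t,\Phi^2_t)$, and it is positive and admissible since each $\Phi^k_t$ is. Output admissibility and the bound (3) in Definition \ref{Well-posed-triple} also follow componentwise. Writing $u=(u_1,u_2)$, one checks $K\Phi_r u=(K_1\Phi^2_r u_2,\,K_2\Phi^1_r u_1)$, with the indices fixed by the mapping properties $K_1:\bm{D}(A_1)\to U_2$ and $K_2:\bm{D}(A_2)\to U_1$. Hence the input/output map is the off-diagonal block
\begin{equation*}
\F=\left(\begin{matrix}0&\F_2\\ \F_1&0\end{matrix}\right)
\end{equation*}
on $\bm{L}_p(0,t;U)\cong \bm{L}_p(0,t;U_1)\times \bm{L}_p(0,t;U_2)$, up to the labelling of the blocks. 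The transfer function is likewise $\mathbf{H}(\la)=\left(\begin{smallmatrix}0&\mathbf{H}_2(\la)\\ \mathbf{H}_1(\la)&0\end{smallmatrix}\right)$. Weak regularity with zero feedthrough follows by pairing against $u^*=(u_1^*,u_2^*)\in U'=U_1'\times U_2'$ and using the weak regularity of each triple.

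The key step is to show that $I$ is an admissible positive feedback operator, that is, $(I-\F)^{-1}$ exists and is positive. Since $\F^2=\operatorname{diag}(\F_2\F_1,\F_1\F_2)$, the spectral radii satisfy $r(\F)^2=r(\F^2)=\max\{r(\F_1\F_2),r(\F_2\F_1)\}$. The equality $r(\F_1\F_2)=r(\F_2\F_1)$ holds because $\sigma(PQ)\cup\{0\}=\sigma(QP)\cup\{0\}$, which also justifies the "equivalently" in the statement. Thus $r(\F)<1$, the Neumann series $\sum_{n\ge0}\F^n$ converges in $\calB(\bm{L}_p(0,\t;U))$, and it is positive because $\F$ is positive and the positive cone is closed. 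Explicitly,
\begin{equation*}
(I-\F)^{-1}=\left(\begin{matrix}(I-\F_2\F_1)^{-1}&(I-\F_2\F_1)^{-1}\F_2\\ (I-\F_1\F_2)^{-1}\F_1&(I-\F_1\F_2)^{-1}\end{matrix}\right),
\end{equation*}
with the products ordered to match the labelling of $\F$. All hypotheses of Lemma \ref{theorem-ABC} then hold, so $\calA$ generates a positive $C_0$-semigroup $\calT$ and the unique positive mild solution is $z(t)=\calT(t)x$. Since $\bm{D}(\calA)$ encodes exactly the coupling conditions \eqref{dynamic-boundary2}, this $z$ solves the $\mathbf{aBCS}$.

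The main obstacle I expect is bookkeeping rather than analysis. I need to make sure the product structure of extrapolation spaces and cones, and of the $\bm{L}_p$ spaces, is compatible with the definitions, and that the index conventions for $\F_1$ and $\F_2$ match the off-diagonal structure of $K$. The spectral-radius argument itself is routine.
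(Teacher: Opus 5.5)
Your proposal is correct and follows the paper's proof. You lift to the product system, check that $\tilde A$, $T$, $D_\lambda$, $B$ and $\Phi_t$ are block diagonal while $C$, $\F$ and $\mathbf{H}$ are off-diagonal, and invoke Lemma~\ref{theorem-ABC}. The only variation is in the feedback step: you obtain $(I-\F)^{-1}\ge 0$ from $r(\F)^2=r(\F^2)=r(\F_1\F_2)<1$ and the Neumann series. The paper instead uses a triangular factorization of $I-\F$ (whose $(1,1)$ entry should read $I-\F_2\F_1$) and leaves the positivity of the inverse implicit, so your version is slightly more transparent.

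One bookkeeping correction. For $Gz=Kz$ to encode $G_1z_1=K_2z_2$ and $G_2z_2=K_1z_1$, one needs $C=\left(\begin{smallmatrix}0&C_2\\ C_1&0\end{smallmatrix}\right)$ and $K\Phi_r u=(K_2\Phi^2_r u_2,\,K_1\Phi^1_r u_1)$. This is exactly what produces your (correct) $\F=\left(\begin{smallmatrix}0&\F_2\\ \F_1&0\end{smallmatrix}\right)$.
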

\begin{proof}
Under Assumption \ref{Assp12}, we can always define
\begin{equation}\label{restriction-operator}
\tilde{A}\coloneqq A,\quad \bm{D}(\tilde{A})\left(\coloneqq \ker G\right)= \bm{D}(\tilde{A}_1)\times \bm{D}(\tilde{A}_2)
\end{equation}
such that $\tilde{A}$ generates a positive $C_0$-semigroup $T\coloneqq \left(T(t)\right)_{t\geq0}$ on $X$. Hence,
\begin{equation*}
T=\left(\begin{matrix}
T_1 & 0 \\
0 & T_2
\end{matrix}\right)
\end{equation*}
and moreover, since there is a Dirichlet operator on $U$ given by
\begin{equation*}
D_\la\coloneqq\left(\begin{matrix}
D_\la^{1} & 0 \\
0 & D_\la^{2}
\end{matrix}\right)
\end{equation*}
for each $\la \in \varrho(\tilde{A})\coloneqq\varrho(\tilde{A}_1)\cap \varrho(\tilde{A}_2)$, we have for each such $\la$ that there is a boundary control operator on $U$ which is defined by
\begin{equation*}
B\coloneqq (\la I-\tilde{A}_{-1})D_\la\in \calB(U, X_{-1}^{\tilde{A}}),
\end{equation*}
where, we recall, $\tilde{A}_{-1}$ is an extension of $\tilde{A}$ to $X$. Therefore
\begin{equation*}
B=\left(\begin{matrix}
B_1 & 0 \\
0 & B_2
\end{matrix}\right)
\end{equation*}
and we obtain for the input map corresponding to the pair $(\tilde{{A}}, {B})$
\begin{equation}\label{input-maps}
\Phi_{t}\left(\begin{matrix}
u_1\\
u_2
\end{matrix}\right)=\left(\begin{matrix}
\Phi_{t}^1u_1\\
\Phi_{t}^2u_2
\end{matrix}\right)
\end{equation}
for all $t\in\R_+$ and any $(u_1,u_2)^\top\in \bm{L}_p(\R_+, U)$. This establishes the positive $\bm{L}_p$-admissibility of the pair $(\tilde{A},B)$ in the sense of Definition \ref{Well-posed-triple}, wherein we have for $k=1,2$ the input map $\Phi_{t}^k$ corresponding to the pair $(\tilde{A}_k,B_k)$ given by
\begin{equation*}
\Phi^k_{t}u_k= \int_{0}^{t} (T_k)_{-1}(t-r){B}_ku_k(r)dr
\end{equation*}
for any $u_k\in \bm{L}_{p,+}(0,t;U_k)$. Define now the observation operator 
\begin{equation*}
C\coloneqq K\vert_{ \bm{D}(\tilde{A})}=\left(\begin{matrix}
0 & C_2\\
C_1& 0
\end{matrix}\right).
\end{equation*}
The pairs $(C_1,\tilde{A}_1)$, $(C_2,\tilde{A}_2)$ are positive $\bm{L}_p$-admissible, implying that the pair $(C,\tilde{A})$ is also positive $\bm{L}_p$-admissible. For its Yoshida extension, we have
\begin{equation}\label{Yoshida-extension}
C_\Lambda\coloneqq\left(\begin{matrix}
0 & (C_2)_\Lambda\\
(C_1)_\Lambda& 0
\end{matrix}\right), \quad \bm{D}(C_\Lambda)=\bm{D}((C_1)_\Lambda)\times \bm{D}
((C_2)_\Lambda),
\end{equation}
where $(C_1)_\Lambda$, $(C_2)_\Lambda$ are the Yoshida extensions of $C_1$ and $C_2$ with respect to $\tilde{A}_1$ and $\tilde{A}_2$, respectively. Finally we define the operator $\F$ by
\begin{equation*}
\left(\F\left(\begin{matrix}
u_1\\
u_2
\end{matrix}\right)\right)(t)\coloneqq K \Phi_{t}\left(\begin{matrix}
u_1\\
u_2
\end{matrix}\right)
\end{equation*}
for any $(u_1,u_2)^\top\in \mathring{\bm{W}}^{1}_{p,+}(0,\t; U)$, $t\in[0,\t]$, and $\t\in\R_+$, which in view of \eqref{input-maps} implies
\begin{equation*}
\left(\F\left(\begin{matrix}
u_1\\
u_2
\end{matrix}\right)\right)(t)= K \left(\begin{matrix}
\Phi_{t}^1u_1\\
\Phi_{t}^2u_2
\end{matrix}\right)=\left(\begin{matrix}
K_2\Phi_{t}^2u_2\\
K_1\Phi_{t}^1u_1
\end{matrix}\right).
\end{equation*}
We have that the triples $(\tilde{A}_1,B_1, C_1)$, $(\tilde{A}_2,B_2,C_2)$ are hence positive $\bm{L}_p$-well-posed triples and so it follows from Definition \ref{Well-posed-triple} that there exists $\eta\coloneqq \eta(\t)$ for every $\tau>0$ such that for any $u\in \mathring{\bm{W}}^{1}_{p,+}(0,\t;U)$,
\begin{equation*}
\Vert \F u\Vert_{\bm{L}_p(0,\t;U)}\leq \eta \Vert u\Vert_{\bm{L}_p(0,\t;U)}.
\end{equation*}
Thus all the conditions of Definition \ref{Well-posed-triple} are satisfied, and the triple $(\tilde{A},B,C)$ is a positive $\bm{L}_p$-well-posed triple on $(U,X,U)$ with input/output map $\F$ given by
\begin{equation*}
\F=\left(\begin{matrix}
0&\F_2\\
\F_1&0
\end{matrix}\right),
\end{equation*}
$\F_{1}$ and $\F_2$ being the input/output maps corresponding to the triples $(\tilde{A}_1,B_1,C_1)$, $(\tilde{A}_2,B_2,C_2)$, respectively. Consequently for the transfer function of the triple $(\tilde{A},B,C)$, 
\begin{equation*}
\mathbf{H}=\left(\begin{matrix}
0&\mathbf{H}_2\\
\mathbf{H}_1&0
\end{matrix}\right),
\end{equation*}
where $\mathbf{H}_1$ and $\mathbf{H}_2$ are the transfer functions of the triples $(\tilde{A}_1,B_1,C_1)$, $(\tilde{A}_2,B_2,C_2)$, respectively, and hence, $(\tilde{A}_1,B_1,C_1)$ and $(\tilde{A}_2,B_2,C_2)$ being weakly regular triples, for $\la\in\R$ and any $v\in U$, $u^*\in U'$,
\begin{equation*}
\lim_{\la\to +\infty }\langle\mathbf{H}(\la)v,u^*\rangle_{U,U'}=0.
\end{equation*}
It follows that the triple $(\tilde{A},B,C)$ is a positive $\bm{L}_p$-well-posed weakly regular triple with zero feedthrough in the sense of Definition \ref{regularity}. Let us write
\begin{equation*}
I-\F=\left(\begin{matrix}
I-\F_1\F_2&-\F_2\\
0 & I
\end{matrix}\right)\left(\begin{matrix}
I &0\\
-\F_1 & I 
\end{matrix}\right).
\end{equation*}
The identity operator $I$ on $U$ appearing in the left side is an admissible positive feedback operator for the triple $(\tilde{{A}},{B},{C})$ with corresponding input/output map $\F$ (in the sense of Definition \ref{Definition-admissible}) if and only if the inverse of $I-\F_1\F_2$ exists and is positive as an operator on $\calB_+(\bm{L}_p(0,\t;U_1),\bm{L}_p(0,\t;U_2))$ for some $\tau>0$. This is fulfilled in view of the conditions imposed on the spectral radii in the theorem. Applying Lemma \ref{theorem-ABC} with $\calA$ as defined by \eqref{Big-A}, we conclude that the system operator $\calA$ generates a positive $C_0$-semigroup $\calT\coloneqq (\calT(t))_{t\in\R_+}$ on $X$ which can be represented in the form \eqref{variation}. Moreover, \eqref{dada} admits a unique positive mild solution $z\in \bm{C}(\R_+;{X})$ given by \eqref{eqwr4532}. This completes the proof of the theorem.
\end{proof}

\begin{remark}
Note from \eqref{restriction-operator} that the domain $\bm{D}(\tilde{A})$ of $\tilde{A}$ is diagonal. Then, with \eqref{input-maps} and \eqref{Yoshida-extension}, the solutions $z=z(t)$ can be written component-wise for any $x_1\in X_1$ and $x_2\in X_2$ as
\begin{equation*}
z_1(t)=T_1(t)x_1+ \Phi_t^1(C_2)_\Lambda z_2(\cdot),\quad z_2(t)=T_2(t)x_2+ \Phi_t^2(C_1)_\Lambda z_1(\cdot),\quad t\in\R_+.
\end{equation*}
\end{remark}

The next result consists of a characterization of the spectral properties of the system operator $\calA$ along with its resolvent operator computation. 
\begin{proposition}\label{Spectrum}
Let Assumption \ref{Assp12} be satisfied and let it be further assumed that $K_k$, $k=1,2$, is a positive operator. For $k=1,2$, suppose that $D_\lambda^k$ is a positive operator for all $\lambda > s(\tilde{A}_k)$. Suppose also that $\mathcal{A}$ generates a positive $C_0$-semigroup $\mathcal{T} := (\mathcal{T}(t))_{t\in\R_+}$ on $X$. For $\lambda \in \varrho(\tilde{A}_1) \cap \varrho(\tilde{A}_2)$, define
\begin{equation*}
\mathbb{A}_\la\coloneqq K_1 D_\la^1 K_2 D_\la^2.
\end{equation*}
Then the following conditions are equivalent:
\begin{enumerate}[\normalfont(i)]
\item\label{implSpectrum01} $\max\big\{s(\tilde{A}_1), s(\tilde{A}_2)\big\}<\lambda$ and $r(\mathbb{A}_\lambda)<1$; and
\item\label{implSpectrum02} $s(\mathcal{A})<\lambda$.
\end{enumerate}
Moreover, for all $\lambda>s(\mathcal{A})$, 
\begin{equation}\label{resolvent-operator}
R(\la, \calA)=
\left(\begin{matrix}
\left(I+ D_\la^1 K_2 \Delta_\la\right)R(\la,\tilde{A}_1) & D_\la^1 K_2\left(I+ \Delta_\la D_\la^1 K_2\right)R(\la,\tilde{A}_2)\\
\Delta_\la R(\la,\tilde{A}_1) & I+\Delta_\la D_\la^1K_2 R(\la,\tilde{A}_2)
\end{matrix}\right),
\end{equation}
wherein $\Delta_\la\coloneqq D_\la^2\left(I-\mathbb{A}_\la\right)^{-1}K_1$.
\end{proposition}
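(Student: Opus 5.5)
The plan is to reduce the resolvent equation for $\calA$ to an equation on the boundary space $U_2$ using the Dirichlet operators, read off \eqref{resolvent-operator}, and then get the equivalence from two standard facts about positive operators:
\begin{itemize}
\item[(a)] if $\calA$ generates a positive $C_0$-semigroup and, for some real $\lambda$, $\lambda\in\varrho(\calA)$ with $R(\lambda,\calA)\ge 0$, then $\lambda>s(\calA)$;
\item[(b)] if $S\ge 0$ on a Banach lattice and $I-S$ has a positive bounded inverse, then $r(S)<1$.
\end{itemize}
Throughout I use that $r(ST)=r(TS)$. I also use that $K_kD^k_\lambda$ and $K_kR(\lambda,\tilde A_k)$ are bounded: $K_k$ is bounded on $\bm{D}(A_k)$ with the graph norm, $D^k_\lambda$ maps boundedly into $\ker(\lambda I-A_k)\subset\bm{D}(A_k)$, and $R(\lambda,\tilde A_k)$ maps boundedly into $\bm{D}(\tilde A_k)$.

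\emph{Reduction.} Fix $\lambda\in\varrho(\tilde A_1)\cap\varrho(\tilde A_2)$ and $f=(f_1,f_2)^\top\in X$. Using $Z=\ker G\oplus\ker(\lambda I-A)$, every solution $x\in\bm{D}(\calA)$ of $(\lambda I-A)x=f$ has the form
\begin{equation*}
x_k=R(\lambda,\tilde A_k)f_k+D^k_\lambda u_k,\qquad k=1,2,
\end{equation*}
with $u_1=G_1x_1$ and $u_2=G_2x_2$. The coupling conditions $u_1=K_2x_2$ and $u_2=K_1x_1$ then become
\begin{equation*}
u_2=K_1R(\lambda,\tilde A_1)f_1+K_1D^1_\lambda K_2R(\lambda,\tilde A_2)f_2+\mathbb{A}_\lambda u_2 ,
\end{equation*}
and $u_1$ is recovered from $u_2$. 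Hence $\lambda\in\varrho(\calA)$ if and only if $I-\mathbb{A}_\lambda$ is bijective on $U_2$; this is a Schur-complement argument, and injectivity goes the same way with $f=0$. In that case, solving for $u_2$ and back-substituting gives exactly \eqref{resolvent-operator} with $\Delta_\lambda=D^2_\lambda(I-\mathbb{A}_\lambda)^{-1}K_1$.

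\emph{(i)$\Rightarrow$(ii).} The resolvent identity gives $D^k_\lambda-D^k_\mu=(\mu-\lambda)R(\lambda,\tilde A_k)D^k_\mu\ge 0$ for $\mu\ge\lambda>s(\tilde A_k)$. Hence $0\le\mathbb{A}_\mu\le\mathbb{A}_\lambda$, and so $r(\mathbb{A}_\mu)\le r(\mathbb{A}_\lambda)<1$. The Neumann series then shows $(I-\mathbb{A}_\mu)^{-1}=\sum_n\mathbb{A}_\mu^n\ge 0$. Every entry of \eqref{resolvent-operator} is a sum of products of positive operators: $D^k_\mu$, $K_k$, $R(\mu,\tilde A_k)$ (positive since $\mu>s(\tilde A_k)$ and $T_k$ is positive), and $(I-\mathbb{A}_\mu)^{-1}$. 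Thus $\lambda\in\varrho(\calA)$ and $R(\lambda,\calA)\ge 0$, and fact (a) yields $s(\calA)<\lambda$.

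\emph{(ii)$\Rightarrow$(i).} First I need $\lambda>s(\tilde A_k)$. Take $\mu$ large, with $\mu>\max\{s(\calA),\omega_0(T_1),\omega_0(T_2)\}$ and $r(\mathbb{A}_\mu)<1$; the latter is where a little care is needed, via decay of $\mathbb{A}_\mu$ as $\mu\to\infty$. The formula gives
\begin{equation*}
0\le\operatorname{diag}\bigl(R(\mu,\tilde A_1),R(\mu,\tilde A_2)\bigr)\le R(\mu,\calA),
\end{equation*}
since the extra terms are positive. Domination of positive resolvents for large $\mu$ implies $s(\tilde A_k)\le s(\calA)<\lambda$; this strengthens \eqref{spectral-bound}. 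So $\lambda\in\varrho(\tilde A_1)\cap\varrho(\tilde A_2)$, and by the reduction $I-\mathbb{A}_\lambda$ is invertible.

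Next I must show $(I-\mathbb{A}_\lambda)^{-1}\ge 0$. For $v\in U_{2,+}$, choose $f_1\ge 0$ with $K_1R(\lambda,\tilde A_1)f_1$ approximating $v$. Alternatively, write $(I-\mathbb{A}_\lambda)^{-1}=I+\mathbb{A}_\lambda(I-\mathbb{A}_\lambda)^{-1}$ and read positivity of $(I-\mathbb{A}_\lambda)^{-1}K_1$ off the positive entry $\Delta_\lambda R(\lambda,\tilde A_1)$ of $R(\lambda,\calA)$, after pushing $D^2_\lambda$ through $G_2$. I expect this step to be the main obstacle. The operators $D^k_\lambda$ and $K_k$ need not be lattice-surjective, so extracting positivity of $(I-\mathbb{A}_\lambda)^{-1}$ itself, and not only of its compositions, may require working directly with the boundary values $u_2=G_2[R(\lambda,\calA)f]_2\ge 0$, or a continuity argument in $\mu\in[\lambda,\infty)$.

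A continuity argument would run as follows. The map $\mu\mapsto r(\mathbb{A}_\mu)$ is decreasing. Since $I-\mathbb{A}_\mu$ is invertible for all $\mu\ge\lambda$, we have $1\notin\sigma(\mathbb{A}_\mu)$ there. For positive operators $r(\mathbb{A}_\mu)\in\sigma(\mathbb{A}_\mu)$, so $r(\mathbb{A}_\mu)\neq 1$ for every $\mu\ge\lambda$, and upper semicontinuity of the spectral radius together with $r(\mathbb{A}_\mu)<1$ for large $\mu$ forces $r(\mathbb{A}_\lambda)<1$. Once positivity of $(I-\mathbb{A}_\lambda)^{-1}$ is established, fact (b) gives $r(\mathbb{A}_\lambda)<1$ directly; either route completes (i).
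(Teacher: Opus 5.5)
Your (i)$\Rightarrow$(ii) argument is sound. It is also tighter than the paper's, which passes from $\lambda\in\varrho(\mathcal{A})$ to $s(\mathcal{A})\le\lambda$ without the positivity of $R(\lambda,\mathcal{A})$ that your fact (a) uses. The gap is in (ii)$\Rightarrow$(i), at the step you pass over as needing ``a little care''. Nothing in the hypotheses makes $\mathbb{A}_\mu$ decay, or gives $r(\mathbb{A}_\mu)<1$ for even one $\mu$: $K_kD^k_\mu$ is the subsystem transfer function, and without zero feedthrough it need not tend to $0$. Take $X_k=\bm{L}_p(0,1)$, $U_k=\R$, $A_k=-\partial_\xi$ on $\bm{W}^1_p(0,1)$, $G_kx=x(0)$, $K_2x_2=x_2(0)$ and $K_1x_1=2x_1(0)$. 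All hypotheses hold. Indeed, $\tilde A_k$ generates the nilpotent positive shift semigroup, so $s(\tilde A_k)=-\infty$; also $D^k_\mu u=e^{-\mu\xi}u\ge0$ and $K_k\ge0$. The coupling conditions $x_1(0)=x_2(0)$ and $x_2(0)=2x_1(0)$ force $x_1(0)=x_2(0)=0$, so $\mathcal{A}=\tilde A_1\oplus\tilde A_2$ generates a positive semigroup with $s(\mathcal{A})=-\infty$. Yet $\mathbb{A}_\mu=2$ for all $\mu$, so (ii) holds for every real $\lambda$ and (i) for none. Your positivity route fails too, since $(I-\mathbb{A}_\lambda)^{-1}=-1$. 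No argument can close this step: (ii)$\Rightarrow$(i) is false as stated. It needs an extra hypothesis such as $r(\mathbb{A}_\mu)<1$ for large $\mu$, which holds e.g.\ when $\|K_kD^k_\mu\|\to0$ (uniform regularity with zero feedthrough, as in the applications).

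There is a second, independent gap in your reduction. From $\lambda\in\varrho(\mathcal{A})$ you only get that $I-\mathbb{A}_\lambda$ is injective and that its range contains the vectors $K_1R(\lambda,\tilde A_1)f_1+K_1D^1_\lambda K_2R(\lambda,\tilde A_2)f_2$. These need not exhaust $U_2$. Replace $2$ by $2S$, with $S$ the unilateral shift on $U_k=\ell^2$ and $X_k=\bm{L}_p(0,1;\ell^2)$. Then $\mathcal{A}$ is again the decoupled operator, but $I-2S$ is injective and not onto. So for infinite-dimensional $U_2$ the resolvent formula is unavailable for general $\lambda>s(\mathcal{A})$. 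Note also that your back-substitution gives the $(2,2)$ entry $(I+\Delta_\lambda D^1_\lambda K_2)R(\lambda,\tilde A_2)$, whereas the displayed formula lacks the factor $R(\lambda,\tilde A_2)$ on $I$.

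Three smaller points. First, upper semicontinuity of $r$ only makes $\{\mu: r(\mathbb{A}_\mu)<1\}$ open; it cannot exclude a jump from above $1$ to below $1$. Instead, given invertibility of $I-\mathbb{A}_{\mu^*}$ at the infimum $\mu^*$, the positive operators $\sum_n\mathbb{A}_\mu^n$ converge in norm to $(I-\mathbb{A}_{\mu^*})^{-1}$. That limit is then positive, and your fact (b) applies. Second, $s(\tilde A_k)\le s(\mathcal{A})$ holds without the formula. For $f\ge0$ and $\mu>\max\{s(\mathcal{A}),s(\tilde A_1),s(\tilde A_2)\}$ one has $R(\mu,\mathcal{A})f-R(\mu,\tilde A)f=D_\mu KR(\mu,\mathcal{A})f\ge0$. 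Blow-up of $\|R(\mu,\tilde A_k)\|$ at $s(\tilde A_k)$ then finishes the argument. Third, the paper's own proof has the same hole. It asserts that invertibility of $I-\mathbb{A}_\lambda$ is equivalent to $r(\mathbb{A}_\lambda)<1$ because $\mathbb{A}_\lambda\ge0$. It also asserts that $\lambda\in\varrho(\mathcal{A})$ forces $1\in\varrho(KD_\lambda)$. The two examples above refute both claims. Finally, the paper invokes \eqref{spectral-bound}, which comes from Lemma \ref{theorem-ABC}, whose hypotheses are not assumed here.
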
 
\begin{proof}
Let $\la >\max\{s(\tilde{A}_1),s(\tilde{A}_2)\}$ and $x\in {\bm D}(\calA) $. From the representation formula \eqref{representation}, we obtain
\begin{align}
(\la I-\calA )x &=(\la I- A)x\notag\\
&=(\la I- \tilde{A}_{-1}-B G)x\notag\\
&=(\la I- \tilde{A})(x-D_\la Gx)\notag\\
&=(\la I- \tilde{A})(x-D_\la Kx), \label{resolvent.charact}
\end{align}
where the third equality holds because $x-D_\la Gx\in {\bm D}(\tilde{A})$. Thus,
\begin{equation}\label{spectral-chara}
\la \in \varrho(\calA) \iff 1\in \varrho( D_\la K) \iff 1\in \varrho(K D_\la ),
\end{equation}
where we have used that $K D_\lambda$ and $D_\lambda K$ share the same nonzero spectrum (since $(I - D_\lambda K)^{-1}=I+D_\lambda (I - K D_\lambda)^{-1} K$). Next, for $\la >\max\{s(\tilde{A}_1),s(\tilde{A}_2)\}$, we obtain the factorization
\begin{equation}\label{cara2}
I-K D_\la=\left( \begin{matrix}
I & 0\\
-K_1 D_\la^1 & I-\A_\la
\end{matrix}\right)\left(\begin{matrix}
I & -K_2 D_\la^2\\
0& I
\end{matrix}\right).
\end{equation}
Thus $I-KD_\lambda$ is invertible if and only if $I-\A_\lambda$ is invertible. Since $\A_\lambda$ is a positive operator, this is equivalent to $r(\A_\lambda)<1$. 
Combining this with \eqref{spectral-chara}, we obtain
\begin{equation}\label{condition.invert}
\lambda\in \varrho(\mathcal{A}) \iff r(\A_\lambda)<1.
\end{equation}
If \ref{implSpectrum01} holds, then $\lambda\in \varrho(\mathcal{A})$, hence $s(\mathcal{A})\le \lambda$. If $s(\mathcal{A})=-\infty$, the positivity of $\mathcal{T}$ implies that $R(\lambda,\mathcal{A})$ is positive, and therefore, by \cite[Theorem C-III.1.1-(b)]{MR839450}, $s(\mathcal{A})<\lambda$. If $s(\mathcal{A})>-\infty$, the positivity of $\mathcal{T}$ implies that $s(\mathcal{A})\in \sigma(\mathcal{A})$ (cf. \cite[Theorem C-III.1.1-(a)]{MR839450}). Hence, $s(\mathcal{A})<\lambda$. This proves \ref{implSpectrum01}$\implies$\ref{implSpectrum02}. For the proof of \ref{implSpectrum02}$\implies$\ref{implSpectrum01}, let $s(\calA)<\lambda$. Then, by \eqref{spectral-bound},
\begin{equation*}
s(\calA) > \max\{s(\tilde{A}_1),s(\tilde{A}_2)\},
\end{equation*}
and hence $\lambda>\max\{s(\tilde{A}_1),s(\tilde{A}_2)\}$. Moreover, from \eqref{condition.invert} we obtain $r(\A_\lambda)<1$, and we have the desired result.

Now, using \eqref{resolvent.charact}, it follows that for all $\lambda>s(\mathcal{A})$,
\begin{align}
R(\la, \calA)&= (I-D_\lambda K)^{-1} R(\la ,\tilde{A})\notag\\
&=(I+{D}_\la (I-KD_\la)^{-1}{K})R(\la ,\tilde{{A}}).\label{resolvent}
\end{align}
A direct computation using \eqref{cara2} gives
\begin{equation*}
(I-K D_\la )^{-1}=\left(\begin{matrix}
I+ K_2 D_\la^2\left(I-\mathbb{A}_\la\right)^{-1}K_1 D_\la^2& K_2 D_\la^2\left(I-\mathbb{A}_\la\right)^{-1}\\
\left(I-\mathbb{A}_\la\right)^{-1}K_1 D_\la^1& \left(I-\mathbb{A}_\la\right)^{-1}
\end{matrix}\right).
\end{equation*}
Substituting this into the resolvent expression \eqref{resolvent} yields \eqref{resolvent-operator}, completing the proof.
\end{proof}

\begin{remark}
It is important to note that the proof of Proposition \ref{Spectrum} does not rely on the $\bm{L}_p$-well-posedness and regularity of the triples $(\tilde{A}_1,B_1,C_1)$ and $(\tilde{A}_2,B_2,C_2)$. Indeed, on the one hand, we have only used Assumption \ref{Assp12} along with the positivity of the operators $D_\lambda^k$ and $K_k$, $k=1,2$, which implies by \cite[Theorem II.5.3]{MR423039} that $\mathbb{A}_\lambda \in \mathcal{B}(U_2,U_1)$ and hence that $r(\mathbb{A}_\lambda)$ is well defined. On the other hand, we have relied on the fact that the system operator $\mathcal{A}$ generates a positive $C_0$-semigroup on $X$. This makes the spectral characterization in Proposition \ref{Spectrum} independent of the method used to prove the $C_0$-semigroup generation property of $\mathcal{A}$. Therefore, the results established in Proposition \ref{Spectrum} are robust in the sense that they do not depend on the specific well-posedness framework.
\end{remark}

\begin{remark}\label{rem.chara}
It is worth noting that Proposition \ref{Spectrum} provides a complete characterization of the spectral bound of $\mathcal{A}$. More precisely, we have
\begin{equation}\label{sharp-characterization}
s(\mathcal{A})=\inf\left\{\omega<\lambda: r(\A_\lambda)<1\right\},
\end{equation}
wherein $\omega \coloneqq\max\big\{s(\tilde{A}_1), s(\tilde{A}_2)\big\}$. In fact, let $\omega<\mu<\lambda$. Then,
\begin{equation*}
D_\lambda^k\le D_\mu^k,\quad k=1,2,
\end{equation*}
so that $\mathbb{A}_\lambda\le \mathbb{A}_\mu$ (since $K_1$, $K_2$ are positive). Hence, the mapping $\lambda \mapsto \mathbb{A}_\lambda$ is decreasing. By positivity of operators on Banach lattices, $\lambda \mapsto r(\mathbb{A}_\lambda)$ is monotone with respect to the order $\le$ and, thus, is decreasing. Hence, the set $\{\omega<\lambda:r(\A_\lambda)<1\}$ is nonempty and bounded below by $\omega$, so its infimum exists. By Proposition \ref{Spectrum}, $r(\A_\lambda)<1$ for all $\lambda>s(\mathcal{A})$, which implies \eqref{sharp-characterization}.
\end{remark}

An immediate consequence of Proposition \ref{Spectrum} is that a necessary condition for the system operator $\mathcal{A}$ to generate a positive $C_0$-semigroup is that the spectral radius $r(\mathbb{A}_\lambda) < 1$ for some $\lambda > \max\{\omega_0(T_1), \omega_0(T_2)\}$. When the underlying spaces are Hilbert lattices, this condition is also sufficient. In particular, we have the following result.
\begin{proposition}\label{sharp-wellposedness-cond}
For $k=1,2$, let $X_k$ and $U_k$ be Hilbert lattices. Let Assumption \ref{Assp12} be satisfied and let it be further assumed that $K_k$ and $B_k$, $k=1,2$, are positive operators. Suppose the triples $(\tilde{A}_1,B_1, C_1)$, $(\tilde{A}_2,B_2,C_2)$ to be positive $\bm{L}_2$-well-posed weakly regular triples with zero feedthrough in the sense of Definition \ref{regularity}. Then the following statements are equivalent:
\begin{enumerate}[\normalfont(1)]
\item\label{implCor01} The system operator $\calA$ generates a positive $C_0$-semigroup $\calT\coloneqq (\calT(t))_{t\in\R_+}$ on $X$.
\item\label{implCor02} $r(\A_{\lambda_0})<1$ for some $\lambda_0>\max\big\{\omega_0(T_1), \omega_0(T_2)\big\}$.
\end{enumerate}
\end{proposition}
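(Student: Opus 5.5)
The plan is to prove the two implications separately. The direction \ref{implCor01}$\Rightarrow$\ref{implCor02} comes directly from Proposition \ref{Spectrum}. The direction \ref{implCor02}$\Rightarrow$\ref{implCor01} reduces to Theorem \ref{theorem-dynamic-boundary}: I will show that $r(\A_{\lambda_0})<1$ forces $r(\F_1\F_2)<1$ on $\bm{L}_2(0,\t;U_1)$. Throughout I use the identification $\mathbf{H}_k(\la)=K_kR(\la,(\tilde A_k)_{-1})B_k=K_kD^k_\la$ for $\la>\omega_0(T_k)$, which holds because the $k$-th triple has zero feedthrough. Consequently $\A_\la=\mathbf{H}_1(\la)\mathbf{H}_2(\la)$.

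\emph{\ref{implCor01}$\Rightarrow$\ref{implCor02}.} Suppose $\calA$ generates a positive $C_0$-semigroup. Since $B_k=(\la I-(\tilde A_k)_{-1})D^k_\la$ and $R(\la,(\tilde A_k)_{-1})\ge0$ for $\la>s(\tilde A_k)$, positivity of $B_k$ gives $D^k_\la=R(\la,(\tilde A_k)_{-1})B_k\ge 0$. Hence Proposition \ref{Spectrum} applies. I pick any real $\lambda_0>\max\{s(\calA),\omega_0(T_1),\omega_0(T_2)\}$. Then \ref{implSpectrum02}$\Rightarrow$\ref{implSpectrum01} of Proposition \ref{Spectrum} gives $r(\A_{\lambda_0})<1$, which is \ref{implCor02}.

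\emph{\ref{implCor02}$\Rightarrow$\ref{implCor01}.} Fix $\t>0$. The operator $M_{\lambda_0}u\coloneqq e^{-\lambda_0\cdot}u$ is a bounded, invertible and positive similarity on $\bm{L}_2(0,\t;U_k)$. Since $M_{\lambda_0}\F_1\F_2M_{\lambda_0}^{-1}$ is similar to $\F_1\F_2$, we have $r(\F_1\F_2)=r(M_{\lambda_0}\F_1\F_2M_{\lambda_0}^{-1})$. The weighted map $\F_k^{\lambda_0}\coloneqq M_{\lambda_0}\F_kM_{\lambda_0}^{-1}$ is the input/output map of the shifted triple $(\tilde A_k-\lambda_0,B_k,C_k)$, which is exponentially stable because $\lambda_0>\omega_0(T_k)$. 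It is therefore a causal, shift-invariant, positive bounded operator on $\bm{L}_2(\R_+;U_k)$, and its restriction to $[0,\t]$ is a compression of it. Since the inputs here are positive, the restriction argument goes through. The Hilbert-space (Paley--Wiener/Plancherel) theory then identifies $\F_1^{\lambda_0}\F_2^{\lambda_0}$ on $\bm{L}_2(\R_+;U_1)$ with the multiplier whose symbol is $\omega\mapsto\mathbf{H}_1(\lambda_0+i\omega)\mathbf{H}_2(\lambda_0+i\omega)$. For every $n$ this yields
\begin{equation*}
\big\|(\F_1\F_2)^n\big\|_{\calB(\bm{L}_2(0,\t;U_1))}\le c_\t\,\sup_{\omega\in\R}\big\|\big(\mathbf{H}_1(\lambda_0+i\omega)\mathbf{H}_2(\lambda_0+i\omega)\big)^n\big\|,
\end{equation*}
where $c_\t$ accounts for the similarity $M_{\lambda_0}$ and is independent of $n$.

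The key estimate is a positivity domination. The transfer functions are Laplace transforms of positive input/output maps, so $|\mathbf{H}_k(\lambda_0+i\omega)v|\le \mathbf{H}_k(\lambda_0)|v|$ in the lattice sense. Iterating gives $|(\mathbf{H}_1\mathbf{H}_2)^n(\lambda_0+i\omega)v|\le \A_{\lambda_0}^n|v|$. Since lattice norms are monotone, $\|(\mathbf{H}_1\mathbf{H}_2)^n(\lambda_0+i\omega)\|\le\|\A_{\lambda_0}^n\|$ uniformly in $\omega$. Taking $n$-th roots in the previous display gives $r(\F_1\F_2)\le r(\A_{\lambda_0})<1$. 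Theorem \ref{theorem-dynamic-boundary} then yields that $\calA$ generates a positive $C_0$-semigroup. Positivity of $(I-\F_1\F_2)^{-1}$ follows from its Neumann series.

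I expect the main obstacle to be the middle step. It must rigorously identify the positive input/output maps with $\bm{L}_2$ Fourier multipliers, which is where the Hilbert-lattice hypothesis is used. It must also justify the pointwise domination $|\mathbf{H}_k(\lambda_0+i\omega)|\le\mathbf{H}_k(\lambda_0)$, which needs a kernel or measure representation of the positive shift-invariant map. I would first try to obtain that domination directly from the formula $\mathbf{H}_k(\la)=\int_0^\infty e^{-\la t}\,d\mathcal{K}_k(t)$ with a positive operator-valued measure $\mathcal{K}_k$, reading it off as the Laplace transform of $\F_k$ applied to step inputs.
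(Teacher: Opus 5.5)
Your proposal is correct. For (1)$\Rightarrow$(2) you do what the paper does, and you add a check the paper omits: $D^k_\la=R(\la,(\tilde A_k)_{-1})B_k\ge0$, which Proposition~\ref{Spectrum} requires. For (2)$\Rightarrow$(1) your route is genuinely different.

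The paper passes to the coupled triple $(\tilde A,B,C)$. It uses \eqref{cara2} to turn $r(\A_{\lambda_0})<1$ into $r(\mathbf{H}(\lambda_0))=r(KD_{\lambda_0})<1$. It then invokes the Hilbert-space criterion of \cite[Remark 3.4]{GANTOUH2026106307} as a black box, and concludes with Lemma~\ref{theorem-ABC}.

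You stay with the two subsystems and in effect prove that criterion yourself for $\F_1\F_2$:
\begin{itemize}
\item exponential similarity on $[0,\t]$, causality and Plancherel give $\|(\F_1\F_2)^n\|\le e^{|\lambda_0|\t}\sup_\omega\|(\mathbf{H}_1\mathbf{H}_2)^n(\lambda_0+i\omega)\|$;
\item positivity bounds the right-hand side by $e^{|\lambda_0|\t}\|\A_{\lambda_0}^n\|$;
\item Theorem~\ref{theorem-dynamic-boundary} then finishes.
\end{itemize}
The paper's argument is shorter but only as transparent as the cited remark. Yours is self-contained and shows exactly where the Hilbert structure (Plancherel) and positivity (domination on vertical lines) enter. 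It also yields $r(\F_1\F_2)\le\inf_{\la>\max\{\omega_0(T_1),\omega_0(T_2)\}}r(\A_\la)$.

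The domination you flag as the main obstacle needs no measure representation. Write $\mu=\operatorname{Re}\la>\omega_0(T_k)$ and let $\Psi_k$ be the extended output map $x\mapsto C_kT_k(\cdot)x$.
\begin{itemize}
\item Comparing Laplace transforms, using $D^k_s-D^k_\la=(\la-s)R(s,\tilde A_k)D^k_\la$, gives $\F_k(e^{\la\cdot}v)=e^{\la\cdot}\mathbf{H}_k(\la)v-\Psi_kD^k_\la v$.
\item Apply $|\F_ku|\le\F_k|u|$ with $|e^{\la\cdot}v|=e^{\mu\cdot}|v|$, then multiply by $e^{-\mu t}$.
\item The constant vector $\big(|\mathbf{H}_k(\la)v|-\mathbf{H}_k(\mu)|v|\big)^+$ is then dominated by transient terms. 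Their $\bm{L}_2(N,N+1)$-norm is $O(e^{(\omega-\mu)N})$ for any $\omega\in(\omega_0(T_k),\mu)$, so the vector vanishes.
\end{itemize}

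Two slips: $\F_1\F_2$ acts on $\bm{L}_2(0,\t;U_2)$, not $U_1$. And $\mathbf{H}_k(\la)=K_kD^k_\la$ is just \eqref{transfer.fct}, not a consequence of zero feedthrough.
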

\begin{proof}
The implication \ref{implCor01}$\implies$\ref{implCor02} follows from Proposition \ref{Spectrum}. In order to prove the converse implication, it suffices (by Lemma \ref{theorem-ABC} and the proof of Theorem \ref{theorem-dynamic-boundary}) to show that the identity operator $I$ on $U$ is an admissible positive feedback operator for the triple $(\tilde{A},B,C)$ defined in the proof of Theorem \ref{theorem-dynamic-boundary}. In the Hilbert space setting, this is equivalent to the condition (cf.\ \cite[Remark 3.4]{GANTOUH2026106307}) that for some $\lambda_0>\omega_0(T)$,
\begin{equation*}
r(\mathbf{H}(\lambda_0))<1,
\end{equation*}
where $\mathbf{H}$ is the transfer function of the triple $(\tilde{A},B,C)$. Since $(\tilde{A},B,C)$ is $\bm{L}_2$-well-posed (see the proof of Theorem \ref{theorem-dynamic-boundary}), it follows from \eqref{transfer.fct} that
\begin{equation*}
\mathbf{H}(\lambda) = K D_\lambda,\quad \operatorname{Re}\lambda>\omega_0(T).
\end{equation*}
Using the factorization \eqref{cara2}, \ref{implCor02} implies that $r(K D_{\lambda_0})<1$ and hence that $r(\mathbf{H}(\lambda_0))<1$. Therefore, the identity operator $I$ on $U$ is an admissible positive feedback operator for the triple $(\tilde{A},B,C)$. Applying Lemma \ref{theorem-ABC}, we conclude that $\calA$ generates a positive $C_0$-semigroup on $X$. This prove \ref{implCor02}$\implies$\ref{implCor01}. 
\end{proof}

\begin{remark}\label{test-remark1}
We note that the above result yields a useful and verifiable test for $C_0$-semigroup generation in terms of the transfer function of the $\mathbf{aBCS}$, in which the boundary conditions are encoded into the the boundary coupling operators. In other words, provided that the underlying setting is that of Hilbert lattices, the above result provides a very simple test for identifying those boundary couplings that guarantee the well-posedness of the $\mathbf{aBCS}$.
\end{remark}

We can now state and prove the following stability result.
\begin{theorem}\label{stability-result}
Consider the setting of Theorem \ref{theorem-dynamic-boundary}. Then the following conditions are equivalent:
\begin{enumerate}[\normalfont(i)]
\item\label{stability-resulta} $s(\calA)<0 $; and
\item\label{stability-resultb} $s(\tilde{A}_1),s(\tilde{A}_2)<0$ and $r(\A_0)<1$.
\end{enumerate}
Moreover, if any of the above conditions hold, then there exist constants $J_{x_1,x_2},\alpha>0$ such that
\begin{equation}\label{solution-estimate}
\| z_1(t)\|_{X_1}+\| z_2(t)\|_{X_2}\le J_{x_1,x_2} e^{-\alpha t}\left\| \left(\begin{matrix}
x_1\\
x_2
\end{matrix}\right)\right\|_{X}
\end{equation}
for all $t\in\R_+$ and any $x_1\in {\bm D}(A_1)$, $x_2\in {\bm D}(A_2)$ satisfying the coupling conditions $G_1 x_1=K_2 x_2$ and $ G_2 x_2=K_1 x_1$.
\end{theorem}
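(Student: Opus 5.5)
The equivalence will come from Proposition \ref{Spectrum} with $\lambda=0$; the decay estimate will come from the spectrum-determined growth condition for positive semigroups together with a domain-based bound. First we check that the hypotheses of Proposition \ref{Spectrum} hold in the setting of Theorem \ref{theorem-dynamic-boundary}. There $\calA$ generates a positive $C_0$-semigroup and $K_k$ is positive. It remains to show that $D^k_\lambda$ is positive for $\lambda>s(\tilde A_k)$. Since $T_k$ is positive, $R(\lambda,(\tilde A_k)_{-1})$ is positive on the extrapolation space with the cone $((X_k)_{-1})_+$ for such $\lambda$; here we use \cite[Theorem C-III.1.1]{MR839450}, which gives $R(\lambda,\tilde A_k)\ge0$ for all $\lambda>s(\tilde A_k)$. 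Writing $D^k_\lambda=R(\lambda,(\tilde A_k)_{-1})B_k$ with $B_k\ge0$, we get $D^k_\lambda u\in X_k\cap ((X_k)^{\tilde A_k}_{-1})_+=(X_k)_+$ for $u\ge0$, by \cite[Proposition 2.3]{MR3782646}. Applying Proposition \ref{Spectrum} with $\lambda=0$ then gives directly: (i) $s(\calA)<0$ holds if and only if $\max\{s(\tilde A_1),s(\tilde A_2)\}<0$ and $r(\A_0)<1$, which is (ii).

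For the estimate, assume $s(\calA)<0$. The main obstacle is to pass from the spectral bound to the growth bound, because the spectrum-determined growth condition can fail for positive semigroups on general Banach lattices. We avoid it with the following route. For positive semigroups on Banach spaces with a generating normal cone, a Banach lattice is enough, and \cite[Theorem C-IV.1.1]{MR839450} gives $s(\calA)=\omega_1(\calT)$, the growth bound on $\bm D(\calA)$. So for any $0<\alpha<-s(\calA)$ there is $M\ge1$ with
\begin{equation*}
\|\calT(t)x\|_X\le Me^{-\alpha t}\|x\|_{\bm D(\calA)}\quad\text{for all }x\in\bm D(\calA),\ t\ge0.
\end{equation*}
Alternatively, the same bound follows from the general fact $\omega_1(\calT)\le s(\calA)$ for positive semigroups.

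An initial datum $x=(x_1,x_2)^\top$ with $x_k\in\bm D(A_k)$ satisfying the coupling conditions is exactly an element of $\bm D(\calA)$, and $z(t)=\calT(t)x$ by Theorem \ref{theorem-dynamic-boundary}. Hence
\begin{equation*}
\|z_1(t)\|_{X_1}+\|z_2(t)\|_{X_2}=\|\calT(t)x\|_X\le Me^{-\alpha t}\big(\|x\|_X+\|\calA x\|_X\big).
\end{equation*}
For $x\ne0$ we set $J_{x_1,x_2}:=M(\|x\|_X+\|\calA x\|_X)/\|x\|_X$, which gives \eqref{solution-estimate}. The dependence of $J$ on the data is exactly why the constant in the statement is indexed by $(x_1,x_2)$. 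The case $x=0$ is trivial.
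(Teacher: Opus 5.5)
Your proposal is correct and follows the paper's proof. The equivalence is Proposition \ref{Spectrum} applied at $\lambda=0$. The decay estimate on $\bm D(\calA)$ comes from the growth theory for positive semigroups in \cite{MR839450} (the paper cites Corollary C-IV.1.4), and the data-dependent constant arises from the graph norm, as you make explicit.

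Your check that $D^k_\lambda=R(\lambda,(\tilde A_k)_{-1})B_k\ge 0$ for $\lambda>s(\tilde A_k)$ is a useful addition. It supplies a hypothesis of Proposition \ref{Spectrum} that the paper leaves implicit in the setting of Theorem \ref{theorem-dynamic-boundary}.
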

\begin{proof}
The equivalence between \ref{stability-resulta} and \ref{stability-resultb} follows directly from Proposition \ref{Spectrum}. If we suppose that either either \ref{stability-resulta} or \ref{stability-resultb} holds, then, according to \cite[Corollary C-IV.1.4]{MR839450}, there exist constants $J_x,\alpha>0$ such that for all $t\in\R_+$ and any $x\in {\bm D}(\calA)$,
\begin{equation*}
\| \calT(t)x\|_{X}\le J_x e^{-\alpha t}\| x\|_{ X}.
\end{equation*}
Combining this with \eqref{eqwr4532} yields the estimate \eqref{solution-estimate}, thereby proving the theorem.
\end{proof}

The following corollary answers the question as to what happens to the result of Theorem \ref{stability-result} if the spectrum-determined growth condition holds.
\begin{corollary}\label{Cor2}
Consider the setting of Theorem \ref{theorem-dynamic-boundary}. Assume that $\calT$ satisfies the spectrum-determined growth condition, i.e., $s(\calA)=\omega_0(\calT)$. If 
\begin{equation*}\label{spectral-condition}
s(\tilde{A}_1),s(\tilde{A}_2)<0,\quad r(\A_0)<1,
\end{equation*}
then there exist constants $J, \alpha>0$ such that for all $t\in\R_+$ and any $x_1\in X_1$, $x_2\in X_2$,
\begin{equation*}
\| z_1(t)\|_{X_1}+\| z_2(t)\|_{X_2}\le J e^{-\alpha t}\left(\| x_1\|_{X_1}+\| x_2\|_{X_2}\right).
\end{equation*}
\end{corollary}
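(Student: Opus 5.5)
The argument combines Theorem \ref{stability-result} with the spectrum-determined growth condition, and then translates the semigroup bound into the component-wise estimate. I work in the setting of Theorem \ref{theorem-dynamic-boundary}, so $\calA$ generates a positive $C_0$-semigroup $\calT$ on $X$. The solution of the $\mathbf{aBCS}$ is $z(t)=\calT(t)x$ with $x=(x_1,x_2)^\top$, by \eqref{eqwr4532}.

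First, the hypotheses $s(\tilde{A}_1),s(\tilde{A}_2)<0$ and $r(\A_0)<1$ are exactly condition \ref{stability-resultb} of Theorem \ref{stability-result}. That theorem then gives $s(\calA)<0$. Its proof goes through Proposition \ref{Spectrum} with $\lambda=0$, which is admissible because $0>\max\{s(\tilde{A}_1),s(\tilde{A}_2)\}$. By assumption $\omega_0(\calT)=s(\calA)$, so $\omega_0(\calT)<0$.

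Next, I fix $\alpha$ with $0<\alpha<-\omega_0(\calT)$, for instance $\alpha=-\omega_0(\calT)/2$. The definition of the growth bound, recalled in Section \ref{Sec:1}, gives a constant $J\ge1$ such that
\begin{equation*}
\|\calT(t)\|_{\calB(X)}\le Je^{-\alpha t}\quad\text{for all } t\in\R_+.
\end{equation*}
This is a uniform operator-norm bound, which is the improvement over \eqref{solution-estimate}: there the constant depends on the initial data and the data are restricted to $\bm{D}(\calA)$. Here the bound holds for every $x\in X$, with no domain restriction and no density argument needed.

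Finally, I unfold the norm on $X=X_1\times X_2$. The paper's display has a typo, $\|x_2\|_{X_1}$, which should read $\|x_2\|_{X_2}$. With the sum norm,
\begin{equation*}
\|z_1(t)\|_{X_1}+\|z_2(t)\|_{X_2}=\|\calT(t)x\|_X\le Je^{-\alpha t}\big(\|x_1\|_{X_1}+\|x_2\|_{X_2}\big),
\end{equation*}
which is the claim.

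There is no real obstacle. The only point needing care is to pick $\alpha$ strictly smaller than $-\omega_0(\calT)$, since the infimum defining the growth bound need not be attained.
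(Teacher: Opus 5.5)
Your proposal is correct and follows the argument the paper intends. The paper gives no separate proof of Corollary \ref{Cor2} and treats it as an immediate consequence of Theorem \ref{stability-result}: condition (ii) gives $s(\calA)<0$, the spectrum-determined growth condition gives $\omega_0(\calT)<0$, and the uniform bound $\|\calT(t)\|\le Je^{-\alpha t}$ (with $\alpha<-\omega_0(\calT)$ taken strictly, as you note) combined with the sum norm on $X_1\times X_2$ yields the stated estimate, where you also correctly read the paper's $\|x_2\|_{X_1}$ as $\|x_2\|_{X_2}$.
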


\subsection{Perturbation approach}

In this subsection we present an alternative characterization of exponential stability, which is based on the perturbation approach for resolvent positive operators in \cite{MR4711370}. For this purpose, we recall that a linear operator $Q$ is called \textit{resolvent positive} if there exists $\omega \in \R$ such that $(\omega, \infty) \subset \rho(Q)$ and $R(\lambda,Q) \ge 0$ for all $\lambda > \omega$. Every infinitesimal generator of a positive $C_0$-semigroup is resolvent positive; however, the converse is not necessarily true (see, e.g., \cite[Section 3]{MR872810}).

We begin with a generation result for the system operator $\mathcal{A}$ in $X$.
\begin{theorem}\label{Result1}
For $k=1,2$, let the following assumptions hold:
\begin{enumerate}[label=\normalfont(\roman*)]
\item\label{cond1} $\ker G_k$ is dense in $X_k$ and ${G}_k$ is onto;
\item\label{cond2} $\tilde{A}_k\coloneqq {A}_k\vert_{\ker {G}_k}$ is resolvent positive and, for some $\lambda_0^k > s(\tilde{A}_k)$, there exists $c_k > 0$ such that
\begin{equation*}
\Vert R(\lambda, \tilde{A}_k)x_k \Vert_{X_k} \geq c_k \Vert x_k \Vert_{X_k}
\end{equation*}
for all $\lambda \ge \lambda_0^k$ and any $x_k \in (X_k)_+$; and
\item\label{cond3} $K_k$ is positive, and $D_\lambda^k$ is positive for large enough $\lambda\in\R$.
\end{enumerate}
Then $\calA$ generates a positive $C_0$-semigroup $\calT\coloneqq (\calT(t))_{t\in\R_+}$ on $X$ if and only if
\begin{equation}\label{spectral.cond}
r(K_1 D_{\lambda_1}^1 K_2 D_{\lambda_1}^2)<1
\end{equation}
or, equivalently, $r(K_2 D_{\lambda_1}^2 K_1 D_{\lambda_1}^1)<1$ for some $\lambda_1>\max\{s(\tilde{A}_1),s(\tilde{A}_2)\}$. In this case the spectrum-determined growth condition $\omega_0(\calT)=s(\mathcal{A})$ holds.
\end{theorem}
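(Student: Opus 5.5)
The plan is to reduce the statement to the domain-perturbation generation theorem for resolvent positive operators from \cite{MR4711370}, applied to the product data $A$, $G$, $K$ on $X=X_1\times X_2$, $U=U_1\times U_2$. That result, in the form we intend to use, says the following. Suppose $\ker G$ is dense and $G$ is onto. Suppose $\tilde A$ is resolvent positive and satisfies the lower resolvent estimate $\|R(\lambda,\tilde A)x\|\ge c\|x\|$ on $X_+$ for large $\lambda$. Suppose also that $K\ge 0$ and $D_\lambda\ge 0$ for large $\lambda$. Then $\calA$ generates a positive $C_0$-semigroup if and only if $r(KD_{\lambda})<1$ for some $\lambda>s(\tilde A)$. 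Moreover, in that case $\omega_0(\calT)=s(\calA)$.

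The first step is to check these hypotheses for the diagonal data.
\begin{itemize}
\item Density: $\ker G=\ker G_1\times\ker G_2$ is dense in $X$.
\item Surjectivity: $G=\operatorname{diag}(G_1,G_2)$ is onto by \ref{cond1}.
\item Resolvent positivity: $\tilde A=\operatorname{diag}(\tilde A_1,\tilde A_2)$ has $s(\tilde A)=\max\{s(\tilde A_1),s(\tilde A_2)\}$ and $R(\lambda,\tilde A)=\operatorname{diag}(R(\lambda,\tilde A_1),R(\lambda,\tilde A_2))\ge0$.
\item Lower estimate: for $\lambda\ge\max\{\lambda_0^1,\lambda_0^2\}$ and $x=(x_1,x_2)\in X_+$, the $\ell^1$-type norm on $X$ gives $\|R(\lambda,\tilde A)x\|\ge \min\{c_1,c_2\}\|x\|$.
\item Positivity of the coupling data: $K\ge0$ because its off-diagonal entries $K_1,K_2$ are positive, and $D_\lambda=\operatorname{diag}(D^1_\lambda,D^2_\lambda)\ge0$ for large $\lambda$ by \ref{cond3}.
\end{itemize}
One point needs care: $D_\lambda^k$ is a priori only defined for $\lambda\in\varrho(\tilde A_k)$, and Assumption \ref{Assp11}(i) is not available. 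The Dirichlet operator construction of \cite{MR904952} only needs $\lambda\in\varrho(\tilde A_k)$ and $G_k$ onto, so it still applies here.

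The second step translates the spectral condition. By the factorization \eqref{cara2}, which is purely algebraic and valid for any $\lambda\in\varrho(\tilde A_1)\cap\varrho(\tilde A_2)$, the operator $I-KD_\lambda$ is invertible if and only if $I-\A_\lambda$ is. Since $\A_\lambda\ge0$, it is natural to expect $r(KD_\lambda)<1$ if and only if $r(\A_\lambda)<1$. To make this rigorous I will use the block structure directly: $(KD_\lambda)^2=\operatorname{diag}(K_2D^2_\lambda K_1D^1_\lambda,\;K_1D^1_\lambda K_2D^2_\lambda)$. Hence $r(KD_\lambda)^2=\max\{r(K_2D^2_\lambda K_1D^1_\lambda),r(\A_\lambda)\}=r(\A_\lambda)$, because $PQ$ and $QP$ have the same spectral radius. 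This gives both the equivalence with \eqref{spectral.cond} and the ``equivalently'' clause.

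The main obstacle is ensuring that the cited perturbation theorem of \cite{MR4711370} really has exactly this form: a necessary and sufficient condition $r(KD_\lambda)<1$ for some $\lambda>s(\tilde A)$, under a resolvent positive $\tilde A$ that is not assumed to be a generator. If the reference only yields sufficiency, then for necessity I would argue as in Proposition \ref{Spectrum}. That argument runs as follows.
\begin{itemize}
\item If $\calA$ generates a positive semigroup, then $R(\lambda,\calA)\ge0$ for large $\lambda$.
\item By \eqref{resolvent.charact}, $\lambda\in\varrho(\calA)$ forces $1\in\varrho(KD_\lambda)$ there.
\item Since $\lambda\mapsto r(KD_\lambda)$ is decreasing and positive operators have $r\in\sigma$, the condition $r(KD_\lambda)<1$ follows for large $\lambda$.
\end{itemize}
For the limiting behaviour in the last step I would use $\lambda R(\lambda,\tilde A)\to$ something bounded together with the lower estimate, which controls $\|D_\lambda\|$.

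The spectrum-determined growth condition I would take from the same theorem. It comes from the lower resolvent estimate passing to $\calA$, because $R(\lambda,\calA)\ge R(\lambda,\tilde A)$ by the Neumann series in \eqref{resolvent}.
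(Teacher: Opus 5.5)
Your sufficiency half is the paper's argument: verify the hypotheses of \cite[Theorem~1]{MR4711370} for the diagonal data $\tilde A$, $D_\lambda$, $K$, then translate $r(KD_\lambda)<1$ into $r(\A_\lambda)<1$. Your identity $(KD_\lambda)^2=\operatorname{diag}(K_2D^2_\lambda K_1D^1_\lambda,\,K_1D^1_\lambda K_2D^2_\lambda)$, giving $r(KD_\lambda)^2=r(\A_\lambda)$, justifies this translation more cleanly than the paper's appeal to \eqref{cara2}. That factorization by itself only relates the invertibility of $I-KD_\lambda$ and $I-\A_\lambda$. Since $\lambda_1$ need not be large, you should also note that $D_\lambda\ge0$ for every $\lambda>s(\tilde A)$; this follows from $D_\lambda=D_\mu+(\mu-\lambda)R(\lambda,\tilde A)D_\mu$ for $s(\tilde A)<\lambda<\mu$.

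The genuine gap is the necessity half. From $\lambda\in\varrho(\calA)$ you get $1\in\varrho(KD_\lambda)$, and positivity gives $r(KD_\lambda)\in\sigma(KD_\lambda)$. Together with monotonicity in $\lambda$, this only yields $r(KD_\lambda)\neq1$, not $r(KD_\lambda)<1$. Bounds on $\|D_\lambda\|$ do not help, since $K$ is unbounded on $X$. Positivity of $R(\lambda,\calA)$, which your sketch never really uses, does not suffice either. Writing $y=R(\lambda,\calA)x=R(\lambda,\tilde A)x+D_\lambda Ky$, you only obtain $(I-KD_\lambda)^{-1}w=Ky\ge0$ for $w$ in the cone $KR(\lambda,\tilde A)X_+$, and this cone can be $\{0\}$.

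In fact the ``only if'' part cannot be proved, because it is false. Take
\begin{itemize}
\item[] $X_1=X_2=\bm{L}_1(0,\infty)$, $A_k=-\partial_x$ on $\bm{W}^1_1(0,\infty)$, $G_kf=f(0)$, $U_1=U_2=\R$,
\item[] $K_1f=af(0)$, $K_2f=bf(0)$, with $a,b>0$ and $ab>1$.
\end{itemize}
Then:
\begin{itemize}
\item[] $\tilde A_k$ generates the positive right shift and $s(\tilde A_k)=0$;
\item[] $\|\lambda R(\lambda,\tilde A_k)h\|=\|h\|$ for $h\ge0$;
\item[] $D^k_\lambda c=ce^{-\lambda\cdot}\ge0$, and $\A_\lambda=ab>1$ for every $\lambda>0$.
\end{itemize}
Yet the coupling conditions $f_1(0)=bf_2(0)$ and $f_2(0)=af_1(0)$ force $f_1(0)=f_2(0)=0$. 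So $\calA=\operatorname{diag}(\tilde A_1,\tilde A_2)$ generates a positive $C_0$-semigroup. Note that hypothesis (ii) as literally written, with $c_k$ independent of $\lambda$, can never hold, because $R(\lambda,\tilde A_k)x\to0$ as $\lambda\to\infty$. Its intended form $\|\lambda R(\lambda,\tilde A_k)x_k\|\ge c_k\|x_k\|$, which is what the applications verify, holds here.

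Consequently, the cited theorem cannot supply an ``if and only if'' of the form you quote. Following the paper does not close the gap either. Its necessity is derived from Proposition~\ref{Spectrum}, whose step \eqref{condition.invert} rests on the same invalid inference: that invertibility of $I-\A_\lambda$ with $\A_\lambda\ge0$ forces $r(\A_\lambda)<1$. What is true is that for $T\ge0$, the conditions $1\in\varrho(T)$ and $(I-T)^{-1}\ge0$ imply $r(T)<1$. Hence necessity can be recovered by the argument above only under an extra nondegeneracy hypothesis, for instance that $KR(\lambda,\tilde A)X_+$ is dense in $U_+$.
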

\begin{proof}
The proof of necessity is obvious from Proposition \ref{Spectrum}. For the sufficiency, define the operator
\begin{equation*}
\tilde{A}\coloneqq A,\quad \bm{D}(\tilde{A})\coloneqq \ker G.
\end{equation*}
Then,
\begin{equation*}
\tilde{A}=\left(\begin{matrix}
\tilde{A}_{1} & 0 \\
0 & \tilde{A}_{2}
\end{matrix}\right), \quad \bm{D}(\tilde{A})=\bm{D}(\tilde{A}_1)\times \bm{D}(\tilde{A}_2).
\end{equation*}
Under our assumptions \ref{cond1} and \ref{cond2}, we see that $\tilde{A}_1$ and $\tilde{A}_2$ are densely defined resolvent positive operators. Hence, $\tilde{A}$ is a densely defined resolvent positive operator as well, and we have, for $\la \in \varrho(\tilde{A})\coloneqq\varrho(\tilde{A}_1)\cap \varrho(\tilde{A}_2)$,
\begin{equation*}
R(\lambda,\tilde{A})=\left(\begin{matrix}R(\lambda,\tilde{A}_1) & 0\\ 0 & R(\lambda,\tilde{A}_2)\end{matrix}\right).
\end{equation*}
Moreover, for all $\lambda \ge \max\{\lambda_0^1,\lambda_0^2\}$ and any $x\in X_+$,
\begin{equation*}
\Vert R(\lambda, \tilde{A})x \Vert_X \geq c \Vert x \Vert_X,
\end{equation*}
some constant $c\coloneqq \inf\{c_1,c_2\}$. Thus, by \cite[Theorem 2.5]{MR872810}, $\tilde{A}$ generates a positive $C_0$-semigroup on $X$. By assumption \ref{cond1}, there is a Dirichlet operator $D_\la\in \mathcal{B}(U,X)$ given by
\begin{equation*}
D_\la\coloneqq\left(\begin{matrix}
D_\la^{1} & 0 \\
0 & D_\la^{2}
\end{matrix}\right)
\end{equation*}
for each $\la \in \varrho(\tilde{A})$. By assumption \ref{cond3}, $D_\la$ is positive for $\lambda\in \R$ large enough, and hence, by \cite[Proposition 4.3]{MR4910475}, for all $\lambda>s(\tilde{A})$. Furthermore, using the factorization \eqref{cara2}, we obtain for all $\la >\max\{s(\tilde{A}_1),s(\tilde{A}_2)\}$
\begin{equation*}
I-K D_\la=\left( \begin{matrix}
I & 0\\
-K_1 D_\la^1 & I-\A_\la
\end{matrix}\right)\left(\begin{matrix}
I & -K_2 D_\la^2\\
0& I
\end{matrix}\right),
\end{equation*} 
wherein $\mathbb{A}_\lambda \coloneqq K_1 D_\lambda^1 K_2 D_\lambda^2$. Since $K_1$ and $K_2$ are positive, it follows that $r(K D_\la)<1$ if and only if $r(\mathbb{A}_\lambda)<1$, the latter inequality holding by virtue of \eqref{spectral.cond}. Therefore, by \cite[Theorem 1]{MR4711370}, we conclude that the system operator $\mathcal{A}$ generates a positive $C_0$-semigroup $\calT\coloneqq (\calT(t))_{t\in\R_+}$ on $X$ satisfying $\omega_0(\calT)=s(\mathcal{A})$. The proof is complete.
\end{proof}

\begin{remark}\label{test-remark2}
It is worth noting that the above theorem extends Proposition \ref{sharp-wellposedness-cond} to the Banach lattice setting. In particular, Theorem \ref{Result1} provides a simple and readily verifiable test on the boundary coupling operators for the $C_0$-semigroup generation property of the system operator $\calA$ in this general setting.
\end{remark}

We are now in a position to state and prove the following stability result.
\begin{theorem}\label{Result2}
Let the assumptions of Theorem \ref{Result1} be satisfied and suppose that
\begin{equation}\label{Exp.cond}
\max\{s(\tilde{A}_1),s(\tilde{A}_2)\}<0,\quad r(\A_0)<1.
\end{equation}
Then the $C_0$-semigroup $\calT$ is exponentially stable.
\end{theorem}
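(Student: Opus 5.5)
The plan is to combine the generation result of Theorem \ref{Result1} with the spectral characterization of Proposition \ref{Spectrum}, evaluated at $\lambda=0$. By hypothesis \eqref{Exp.cond}, $0>\max\{s(\tilde{A}_1),s(\tilde{A}_2)\}$. I first check that \eqref{spectral.cond} holds for some $\lambda_1>\max\{s(\tilde{A}_1),s(\tilde{A}_2)\}$; the natural choice is $\lambda_1=0$, since then \eqref{spectral.cond} is exactly $r(\A_0)<1$. Theorem \ref{Result1} then shows that $\calA$ generates a positive $C_0$-semigroup $\calT$ on $X$ and that the spectrum-determined growth condition $\omega_0(\calT)=s(\calA)$ holds.

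Next I apply Proposition \ref{Spectrum} with $\lambda=0$. Its hypotheses must be verified: Assumption \ref{Assp12}, positivity of the $K_k$, positivity of $D^k_\lambda$ for all $\lambda>s(\tilde{A}_k)$, and generation of a positive semigroup by $\calA$.

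\begin{itemize}
\item Generation by $\calA$ was obtained in the previous step.
\item $K_k$ is positive by assumption \ref{cond3} of Theorem \ref{Result1}.
\item Part (i) of Assumption \ref{Assp12} does not follow verbatim from the hypotheses of Theorem \ref{Result1}, but it is obtained in its proof. There, assumption \ref{cond2} together with density of $\ker G_k$ and \cite[Theorem 2.5]{MR872810}, applied to each $\tilde A_k$ separately, gives that each $\tilde{A}_k$ generates a positive $C_0$-semigroup.
\item Part (ii) of Assumption \ref{Assp12}, surjectivity of $G_k$, is assumed in \ref{cond1}.
\item Positivity of $D^k_\lambda$ for all $\lambda>s(\tilde{A}_k)$ follows from positivity for large $\lambda$ by \cite[Proposition 4.3]{MR4910475}, as in the proof of Theorem \ref{Result1}.
\end{itemize}

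With these in place, condition \ref{implSpectrum01} of Proposition \ref{Spectrum} holds at $\lambda=0$, so $s(\calA)<0$.

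Finally, $\omega_0(\calT)=s(\calA)<0$, which is exponential stability. The main point to watch is the hypothesis check in the second step, namely that $\lambda_1=0$ is admissible in Theorem \ref{Result1} and that the spectral bounds are the right quantities. Beyond that, the argument is a direct concatenation of the earlier results.
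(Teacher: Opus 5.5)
Your proposal is correct and follows essentially the same route as the paper's proof. You apply Theorem~\ref{Result1} with $\lambda_1=0$ to get generation together with $\omega_0(\calT)=s(\calA)$, then use Proposition~\ref{Spectrum} at $\lambda=0$ to get $s(\calA)<0$; your explicit check of that proposition's hypotheses (componentwise generation by each $\tilde{A}_k$, and positivity of each $D^k_\lambda$ for all $\lambda>s(\tilde{A}_k)$) spells out what the paper leaves implicit.
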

\begin{proof}
As a result of the properties in \eqref{Exp.cond}, it follows from Theorem \ref{Result1} that the system operator $\mathcal{A}$ generates a positive $C_0$-semigroup $\calT\coloneqq (\calT(t))_{t\in\R_+}$ on $X$ satisfying the spectrum-determined growth condition $\omega_0(\mathcal{T})=s(\mathcal{A})$. The proof is completed by combining the result of Proposition \ref{Spectrum} with \eqref{Exp.cond} to yield $\omega_0(\calT)<0$. 
\end{proof}

Returning to the $\mathbf{aBCS}$ we see that in view of the above results it is now readily verified that it admits a unique positive mild solution $z\in \bm{C}(\R_+;{X})$ for any $x\in X$, given by
\begin{equation*}
{z}(t) =\calT(t)x,\quad t\in\R_+,
\end{equation*}
and that there exist constants $J, \alpha>0$ such that for all $t\in\R_+$ and any $x\in X$,
\begin{equation*}
\| z(t)\|_{X}\le J e^{-\alpha t}\| x\|_{ X}.
\end{equation*}

\section{Applications}\label{Sec:3}

We now consider the use of the results of the preceding section in determining the well-posedness and exponential stability of a number of applications and physical system models.

\subsection{Boundary delay equations}

The aim of this subsection is to show how the class of abstract boundary delay equations introduced in \cite{GANTOUH2026106307} fits into the framework of Section \ref{Sec:1} so that the results of Section \ref{Sec:2} can be applied. Following the notational framework of Section \ref{Sec:1.2}, we consider an abstract dynamical system described by
\begin{equation}\label{DEq}
\dot{z}(t)=Az (t),\quad y(t)=K z(t),\quad t>0,
\end{equation}
with boundary and initial conditions
\begin{equation}\label{DEqx}
Gz(t)= L(y_t),\quad t>0,\quad z(0)=x,\quad y_0=\varphi,
\end{equation}
wherein
\begin{itemize}[leftmargin=*,align=left,labelwidth=\parindent]
\item $\varphi\in \bm{L}_p(-r,0;U)\coloneqq Y_p$ for $0<r<\infty$, $1\le p<\infty$;
\item $L:\bm{W}^{1}_p(-r,0;U)\to U$ is a bounded linear operator; and
\item $y:[-r,+\infty)\to U$ and $y_t:[-r,0]\to U$ as defined by $y_t(\theta)=y(t+\theta)$.
\end{itemize}

For the purposes of proving well-posedness and stability results it will be required to reformulate \eqref{DEq}, \eqref{DEqx} in the form of the $\mathbf{aBCS}$. We observe, first of all, that $y_t=y_t\left(\theta\right)$ satisfies the following initial/boundary-value problem for the transport equation:
\begin{equation}\label{transport-eq}
\dot{y}_t= \partial_\theta y_t,\quad y_t(0)= y(t), \quad t> 0,\quad y_0=\varphi.
\end{equation}
The reformulation is completed by combining \eqref{transport-eq} with \eqref{DEq}, \eqref{DEqx} to give
\begin{equation*}
\dot{z}(t)=A z(t),\quad Gz(t)= L(y_t), \quad \dot{y}_t= \partial_\theta y_t, \quad y_t(0)=Kz(t),\quad t>0,
\end{equation*}
with $U_1=U_2=U$. Define the operator
\begin{equation*}
Q\varphi\coloneqq \partial_\theta \varphi, \quad \bm{D}(Q)\coloneqq \ker\delta_0,
\end{equation*}
where $\delta_0:\bm{W}^{1}_p(-r,0;U)\to U$ is the Dirac measure concentrated at $0$. It is well known that $Q$ generates a left-shift semigroup $S\coloneqq (S(t))_{t\in\R_+}$ on $Y_p$ given by 
\begin{equation*}
(S(t)\varphi)(\theta)=\left\{\begin{split}
0&,&& -t\le \theta\le 0,\\
 \varphi(t+\theta)&,&& -r\le \theta\le -t,\end{split}\right.
\end{equation*}
for any $\varphi\in Y_p$ and a.e.\ $\theta\in [-r,0]$. Clearly, $S$ is positive and $\varrho(Q)=\C$. We further define the operators
\begin{equation*}
\Theta\coloneqq (-Q_{-1})\varepsilon_0, \quad \tilde{L}\coloneqq L\vert_{\bm{D}(Q)},
\end{equation*} 
where $(\varepsilon_\la y)(\theta)\coloneqq e^{\la \theta}y$ for $\la \in \C$, $\theta\in [-r,0]$, and any $y\in U$.

We are now ready to apply Theorem \ref{theorem-dynamic-boundary} to the system described by \eqref{DEq}, \eqref{DEqx} and obtain the well-posedness result which we state as a corollary.
\begin{corollary}\label{Cor1}
Let $X$, $U$ be Banach lattices and let Assumption \ref{Assp11} be satisfied. Identify the operators $\tilde{A}$, $B$, and $C$ of Definition \ref{regularity} as $A$, $G$, and $K$, respectively. Suppose that the triples $(\tilde{A},B, C)$, $(Q,\Theta,\tilde{L})$ with corresponding input/output maps $\F_{1}$ and $\F_2$, respectively, are positive $\bm{L}_p$-well-posed weakly regular triples with zero feedthrough in the sense of Definition \ref{regularity}. If the spectral radius $r(\F_1\F_2)<1$ or, equivalently, $r(\F_2\F_1)<1$, then the operator 
\begin{equation}\label{A-delay}
\calA\coloneqq\operatorname{diag}(A, \partial_\theta),\quad {\bm D}(\calA)\coloneqq \left\{x\in {\bm D}(A),~\varphi\in \bm{W}^{1}_p(-r,0;U): Gx=L(\varphi),~\varphi(0)=Kx\right\}
\end{equation}
generates a positive $C_0$-semigroup $\calT\coloneqq (\calT(t))_{t\in\R_+}$ on $X\times Y_p$. Moreover, there exists for any $x\in X$ and $\varphi\in Y_p$ a unique positive mild solution $z\in \bm{C}(\R_+,X)$ of the system \eqref{DEq}, \eqref{DEqx} which can be represented in the form
\begin{align*}
z(t,x,\varphi)&=T(t)x+ \int_{0}^{t}T_{-1}(t-s)B\tilde{L}_{\Lambda} S(s)\varphi ds
\\&\quad+ \int_{0}^{t}T_{-1}(t-s)B\tilde{L}_{\Lambda} \int_{0}^{s}S_{-1}(s-\sigma)\Theta C_\Lambda z(\sigma,x,\varphi)d\sigma ds,\quad t\in\R_+,
\end{align*}
where $\tilde{L}_{\Lambda}$, $C_\Lambda$ are the Yoshida extensions of $\tilde{L}$ and $C$ with respect to $Q$ and $\tilde{A}$, respectively. 
\end{corollary}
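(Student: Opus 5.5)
The plan is to recognize the delay system \eqref{DEq}, \eqref{DEqx} as an instance of the $\mathbf{aBCS}$ and then apply Theorem \ref{theorem-dynamic-boundary} directly. First, set $X_1\coloneqq X$, $X_2\coloneqq Y_p$ and $U_1=U_2\coloneqq U$. The subsystem operators are $A_1\coloneqq A$ and $A_2\coloneqq\partial_\theta$ on $\bm{W}^1_p(-r,0;U)$. The boundary operators are $G_1\coloneqq G$, $K_1\coloneqq K$, $G_2\coloneqq\delta_0$ and $K_2\coloneqq L$.

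With this choice the coupling conditions $G_1x=K_2\varphi$ and $G_2\varphi=K_1x$ become exactly $Gx=L(\varphi)$ and $\varphi(0)=Kx$. Hence the system operator \eqref{Big-A} built from the block matrices of Section \ref{Sec:2} coincides with \eqref{A-delay}.

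Next, check Assumption \ref{Assp12} for the second pair. The restriction $\partial_\theta|_{\ker\delta_0}=Q$ generates the positive left-shift semigroup $S$. The map $\delta_0$ is onto, since for every $u\in U$ the constant function equal to $u$ lies in $\bm{W}^1_p$ and is mapped to $u$. The Dirichlet operator of $(\partial_\theta,\delta_0)$ is $\varepsilon_\lambda$, because $\varepsilon_\lambda u\in\ker(\lambda-\partial_\theta)$ and $\delta_0\varepsilon_\lambda u=u$. The corresponding control operator is therefore $(\lambda-Q_{-1})\varepsilon_\lambda$, which is independent of $\lambda$ and so equals $\Theta=(-Q_{-1})\varepsilon_0$ (recall $0\in\varrho(Q)$). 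Finally $C_2=L|_{\bm{D}(Q)}=\tilde L$.

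For the first pair, Assumption \ref{Assp11} is assumed and the triple $(\tilde A,B,C)$ is given. Positivity of $K_k,B_k$ and the weak regularity and $\bm{L}_p$-well-posedness of both triples are part of the hypotheses. Under the stated condition on $r(\F_1\F_2)$, Theorem \ref{theorem-dynamic-boundary} then yields that $\calA$ generates a positive $C_0$-semigroup on $X\times Y_p$, together with a unique positive mild solution.

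The remaining task is to derive the explicit formula for $z$, using the component-wise representation in the Remark following Theorem \ref{theorem-dynamic-boundary}. With $z_2(t)=y_t$ it gives
\begin{equation*}
z(t)=T(t)x+\int_0^tT_{-1}(t-s)B\tilde L_\Lambda y_s\,ds,\qquad y_s=S(s)\varphi+\int_0^sS_{-1}(s-\sigma)\Theta C_\Lambda z(\sigma)\,d\sigma .
\end{equation*}
Substituting the second identity into the first gives the stated formula, once we justify pulling $\tilde L_\Lambda$ through the sum.

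This justification is the main obstacle. The Yoshida extension $\tilde L_\Lambda$ need not be defined on all of $Y_p$, so we need $S(s)\varphi$ and $\Phi^2_s(C_\Lambda z)$ to lie in $\bm{D}(\tilde L_\Lambda)$ for a.e.\ $s$, with the extended input/output map acting as $\tilde L_\Lambda\Phi^2_s$. I would obtain this from the admissibility of $(\tilde L,Q)$, which gives $S(s)\varphi\in\bm{D}(\tilde L_\Lambda)$ for a.e.\ $s$. The representation $(\F_2u)(s)=\tilde L_\Lambda\Phi^2_su$ would come from weak regularity, via the corresponding feedback-theoretic fact in \cite{GANTOUH2026106307}. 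Linearity of $\tilde L_\Lambda$ on its domain then completes the substitution.
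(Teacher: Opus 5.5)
Your proposal is correct and follows the paper's route: the corollary is simply Theorem~\ref{theorem-dynamic-boundary} applied with $(A_1,G_1,K_1)=(A,G,K)$ and $(A_2,G_2,K_2)=(\partial_\theta,\delta_0,L)$, so that $T_2=S$, $D^2_\lambda=\varepsilon_\lambda$, $B_2=\Theta$ and $C_2=\tilde L$, with the explicit formula read off from the component-wise Remark after that theorem. One step is shaky but unnecessary: you appeal to weak regularity for $(\F_2u)(s)=\tilde L_\Lambda\Phi^2_s u$, whereas the paper records that representation only in the strongly regular case. You do not need it, because once the Remark gives $y_s\in\bm{D}(\tilde L_\Lambda)$ a.e.\ and admissibility gives $S(s)\varphi\in\bm{D}(\tilde L_\Lambda)$ a.e., linearity puts the difference $\Phi^2_s(C_\Lambda z)$ in $\bm{D}(\tilde L_\Lambda)$ as well, and the splitting follows.
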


It is worth noting that Corollary \ref{Cor1} is a slightly generalized statement of \cite[Theorem 4.1]{GANTOUH2026106307}. Moreover, in the following result, we recover the spectral characterization of exponential stability given in \cite[Theorem 4.2]{GANTOUH2026106307}.
\begin{corollary}
Let the assumptions of Corollary \ref{Cor1} be satisfied. If 
\begin{equation*}
s(\tilde{A})<0,\quad r\left(KD_0L(\varepsilon_0)\right)<1,
\end{equation*}
then there exist constants $N_{x,\varphi}, \alpha>0$ such that
\begin{equation*}
\Vert z(t)\Vert_{X}+\Vert y_t\Vert_{Y_p}\le N_{x,\varphi} e^{-\alpha t}\left(\Vert x\Vert_{X}+\Vert\varphi\Vert_{Y_p}\right),
\end{equation*}
for all $t\in\R_+$ and any $x\in {\bm D}(A)$, $\varphi\in \bm{W}^{1}_p(-r,0;U)$ satisfying the coupling conditions $Gx=L(\varphi)$ and $ \varphi(0)=Kx$.
\end{corollary}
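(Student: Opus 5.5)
The plan is to read the delay system as an instance of the $\mathbf{aBCS}$ and apply Theorem \ref{stability-result} with $X_1=X$, $X_2=Y_p$, $U_1=U_2=U$, $A_1=A$, $G_1=G$, $K_1=K$, and $A_2=\partial_\theta$, $G_2=\delta_0$, $K_2=L$. Under these identifications the coupling conditions $Gx=L(\varphi)$ and $\varphi(0)=Kx$ become exactly $G_1x_1=K_2x_2$ and $G_2x_2=K_1x_1$. The operator $\calA$ in \eqref{A-delay} is therefore the system operator of this $\mathbf{aBCS}$. By Corollary \ref{Cor1} (itself an application of Theorem \ref{theorem-dynamic-boundary}), we are in the setting of Theorem \ref{theorem-dynamic-boundary}, and $\calA$ generates a positive $C_0$-semigroup.

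It then suffices to verify condition \ref{stability-resultb} of Theorem \ref{stability-result}, that is, $s(\tilde{A}_1),s(\tilde{A}_2)<0$ and $r(\A_0)<1$. First, $s(\tilde{A}_1)=s(\tilde{A})<0$ holds by hypothesis. Second, $\tilde{A}_2=Q$, and since $\varrho(Q)=\C$ we have $s(Q)=-\infty<0$. Third, we identify the Dirichlet operator of $(\partial_\theta,\delta_0)$. The kernel of $\la I-\partial_\theta$ on $\bm{W}^1_p(-r,0;U)$ consists of the functions $\theta\mapsto e^{\la\theta}y$. The condition $\delta_0(e^{\la\cdot}y)=y$ then gives $D^2_\la=\varepsilon_\la$, and in particular $D^2_0=\varepsilon_0$, the constant embedding. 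This operator is positive, which is what Proposition \ref{Spectrum} needs. Positivity of $D^1_\la$ for $\la>s(\tilde{A})$ should follow from the positivity of $B$ together with the positive resolvent, via $D_\la=R(\la,\tilde{A}_{-1})B$.

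Hence $\A_0=K_1D^1_0K_2D^2_0=KD_0L\varepsilon_0=KD_0L(\varepsilon_0)$, read as the operator $y\mapsto KD_0L(\varepsilon_0y)$ on $U$. The hypothesis $r(KD_0L(\varepsilon_0))<1$ is therefore precisely $r(\A_0)<1$. Theorem \ref{stability-result} then gives $s(\calA)<0$ and the estimate \eqref{solution-estimate} for initial data in $\bm{D}(\calA)$, i.e.\ for compatible $(x,\varphi)$. Since $\|(x,\varphi)\|=\|x\|_X+\|\varphi\|_{Y_p}$, and the second component of $\calT(t)(x,\varphi)$ is $y_t$ by the transport reformulation \eqref{transport-eq}, we obtain the stated bound with $N_{x,\varphi}=J_{x,\varphi}$.

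The main obstacle is the bookkeeping in Proposition \ref{Spectrum}'s hypotheses. One needs positivity of $D^1_\la$ and $D^2_\la$ and of $K_2=L$, which is understood here on $\bm{W}^1_p(-r,0;U)$ with respect to the pointwise order. I would take positivity of $L$ and $K$ as implicit in the positive well-posedness assumed in Corollary \ref{Cor1}. A further check is that $\A_0$ is well defined as a bounded operator on $U$. This holds because $\varepsilon_0U\subset\bm{W}^1_p(-r,0;U)$ and $L$ is bounded there, so $L\varepsilon_0\in\calB(U)$ with no Yoshida extension needed. I would also confirm that $0$ lies to the right of $\max\{s(\tilde{A}_1),s(\tilde{A}_2)\}$, which holds by the first two verifications.
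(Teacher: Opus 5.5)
Your proposal is correct and is essentially the paper's own argument: the paper proves this corollary in one line, by applying Theorem \ref{stability-result} to the delay system viewed as an $\mathbf{aBCS}$ and using $s(Q)=-\infty$. Your identification $D^2_\lambda=\varepsilon_\lambda$, which gives $\A_0=KD_0L(\varepsilon_0)$, fills in bookkeeping the paper leaves implicit; note also that the positivity of $K$ and $L$ you take as implicit is in fact required explicitly by Definition \ref{Well-posed-triple}.
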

\begin{proof}
The proof follows from the result of Theorem \ref{stability-result} and the fact that $s(Q)=-\infty$.
\end{proof}

In closing this application example, we note that in the setting of Hilbert lattices, application of Proposition \ref{sharp-wellposedness-cond} provides the following sharp well-posedness result for the system \eqref{DEq}, \eqref{DEqx}.
\begin{corollary}
Let $X$, $U$ be Hilbert lattices and let Assumption \ref{Assp11} be satisfied. Identify the operators $\tilde{A}$, $B$, and $C$ of Definition \ref{regularity} as $A$, $G$, and $K$, respectively. Suppose that the triples $(\tilde{A},B, C)$, $(Q,\Theta,\tilde{L})$ are positive $\bm{L}_2$-well-posed weakly regular triples with zero feedthrough in the sense of Definition \ref{regularity}. Then the system operator $\calA$ defined by \eqref{A-delay} generates a positive $C_0$-semigroup $\calT\coloneqq (\calT(t))_{t\in\R_+}$ on $X\times Y_2$ if and only if $r\left(KD_{\lambda_0}L(\varepsilon_{\lambda_0})\right)<1$ for some $\lambda_0>\omega_0(T)$.
\end{corollary}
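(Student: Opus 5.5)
This follows from Proposition \ref{sharp-wellposedness-cond} once the delay system is put in the form of the $\mathbf{aBCS}$, exactly as in Corollary \ref{Cor1}. Take the first subsystem $(X_1,U_1)=(X,U)$ with operator $A$, boundary operator $G$ and coupling operator $K_1\coloneqq K$. Take the second subsystem $(X_2,U_2)=(Y_2,U)$ with operator $\partial_\theta$ on $\bm{W}^1_2(-r,0;U)$, boundary operator $G_2\coloneqq\delta_0$ and coupling operator $K_2\coloneqq L$. Then $\tilde A_2=Q$ and $B_2=\Theta$. The system operator \eqref{A-delay} is the system operator $\calA$ of this $\mathbf{aBCS}$, since its domain conditions $Gx=L(\varphi)$ and $\varphi(0)=Kx$ are the coupling conditions $G_1z_1=K_2z_2$ and $G_2z_2=K_1z_1$.

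Next I check the hypotheses of Proposition \ref{sharp-wellposedness-cond}.
\begin{itemize}
\item $X$ and $U$ are Hilbert lattices by assumption, and $Y_2=\bm{L}_2(-r,0;U)$ is then also a Hilbert lattice.
\item Assumption \ref{Assp12} holds: for the first pair it is Assumption \ref{Assp11}. For the second pair, $Q$ generates the positive left-shift semigroup $S$, and $\delta_0$ is onto because $\delta_0(\varepsilon_\lambda u)=u$ for every $u\in U$.
\item $K$ and $L$ are positive, as is implicit in the well-posedness hypotheses. $B$ and $\Theta$ are positive; for $\Theta$ this holds because $D^2_\lambda=\varepsilon_\lambda$ is positive.
\item Both triples are positive $\bm{L}_2$-well-posed weakly regular triples with zero feedthrough, by hypothesis.
\end{itemize}
Proposition \ref{sharp-wellposedness-cond} then gives that $\calA$ generates a positive $C_0$-semigroup if and only if $r(\A_{\lambda_0})<1$ for some $\lambda_0>\max\{\omega_0(T),\omega_0(S)\}$.

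It remains to identify $\A_\lambda$ and the threshold. The Dirichlet operator of $(\partial_\theta,\delta_0)$ is $D^2_\lambda u=\varepsilon_\lambda u$ for every $\lambda\in\C$, because $\varepsilon_\lambda u$ is the unique element of $\ker(\lambda I-\partial_\theta)$ with value $u$ at $\theta=0$. With $D^1_\lambda=D_\lambda$, this gives
\[
\A_\lambda=K_1D^1_\lambda K_2D^2_\lambda=KD_\lambda L(\varepsilon_\lambda\,\cdot\,).
\]
The spectral radius of this operator on $U$ is what the statement writes as $r(KD_{\lambda_0}L(\varepsilon_{\lambda_0}))$. The shift semigroup is nilpotent, since $S(t)=0$ for $t\ge r$, so $\omega_0(S)=-\infty$. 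Hence the maximum reduces to $\omega_0(T)$, which gives the stated equivalence.

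The only delicate point is confirming that the order of the factors in $\A_\lambda$ matches the statement. This does not matter: by the remark in Theorem \ref{Result1}, $r(K_1D^1_\lambda K_2D^2_\lambda)<1$ if and only if $r(K_2D^2_\lambda K_1D^1_\lambda)<1$, because $PQ$ and $QP$ have the same nonzero spectrum. Apart from that, one has to check that $\delta_0$ being onto together with the positivity of $\varepsilon_\lambda$ gives the positivity of $B_2=\Theta$ required in Proposition \ref{sharp-wellposedness-cond}; this is immediate from $\Theta=(\lambda I-Q_{-1})\varepsilon_\lambda$ and the positivity of $R(\lambda,Q_{-1})$ on the extrapolated cone.
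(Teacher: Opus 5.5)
Your proposal is correct and follows the paper's route: the paper obtains this corollary by applying Proposition \ref{sharp-wellposedness-cond} to the $\mathbf{aBCS}$ reformulation of Corollary \ref{Cor1}, with $K_1=K$, $K_2=L$, $G_2=\delta_0$, $D^2_\lambda=\varepsilon_\lambda$ (so $\A_\lambda=KD_\lambda L(\varepsilon_\lambda\,\cdot\,)$), and $\omega_0(S)=-\infty$ reducing the threshold to $\omega_0(T)$. One small quibble: positivity of $\Theta$ is already part of the hypothesis via Definition \ref{Well-posed-triple}, and your closing justification runs the wrong way, since positivity of $R(\lambda,Q_{-1})$ cannot turn $R(\lambda,Q_{-1})\Theta u=\varepsilon_\lambda u\ge 0$ into $\Theta u\ge 0$; the correct reason is that $\Theta u=\lim_{\mu\to\infty}\mu\varepsilon_\mu u$ in the extrapolation space, which is a limit of positive elements.
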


\subsection{Transport equations on a cyclic network with time-delayed vertex conditions}

In this subsection we consider a network of $m\ge 2$ nonconservative transport equations
\begin{equation}\label{transport-equation}
\dot{u}_j(t,x)= -v_j(x)\partial_{x}u_j(t,x),\quad 1\le j\le m,
\end{equation}
in a cyclic arrangement, by which we mean a network consisting of one vertex and $m$ loops attached to the vertex. The conditions that specify the coupling at the vertex are
\begin{equation}\label{transport-equation2}
\gamma_jv_j(0)u_j(t,0)=\displaystyle\sum_{k=1}^{m}\sigma_{jk}\xi_kv_k(\ell_k)u_k(t-r_k,\ell_k),\quad 1\le j\le m,
\end{equation}
and the initial conditions are
\begin{equation}\label{transport-equation3}
u_j(0,x)= f_j(x),\quad u_j(\theta,\ell_j)=\varphi_j(\theta),\quad 1\le j\le m,
\end{equation}
given functions $f_j$, $\varphi_j$. Above ${u}_j(t,x)$ represents the flow of particles on the $j$-th cycle for $(t,x)\in \R_+\times[0,\ell_j]$, $\ell_j>0$ being the length of each cycle. The functions $v_j(\cdot)$ are the particle-flow velocities, the parameters $\gamma_j$, $\xi_j$ are the absorption and amplification coefficients at the points $x=0$ and $x=\ell_j$, respectively, the $\sigma_{jk}\in \R$ represent the coupling coefficients, and $\theta\in[-r_j,0]$, the $r_j\in \R_+$ being inter-cycle transmission time delays in the network.

The problem of exponential stability for the initial/boundary-value problem \eqref{transport-equation}--\eqref{transport-equation3} in the time-varying (i.e.\ nonautonomous) nonconservative case, with partially internal local time-dependent parameters, was addressed in \cite{MR3624395} in the case when time delays are absent. Here, via \eqref{transport-equation2}, we account for both nonconservativeness as well as time delays in the transmission of flows at the vertex and, additionally, assume the flow velocities to be spatially variable (i.e.\ nonuniform).

To begin our study, we consider the Banach spaces
\begin{equation*}
X_1=\prod_{j=1}^m {\bm L}_p(0,\ell_j;\R),\quad X_2=\prod_{j=1}^m {\bm L}_p(-r_j,0;\R)
\end{equation*}
with respective norms
\begin{equation*}
\Vert f\Vert_{X_1}=\sum_{j=1}^{m}\Vert f_j\Vert_{{\bm L}_p(0,\ell_j;\R)},\quad \Vert \varphi\Vert_{X_2}=\sum_{j=1}^{m}\Vert \varphi_j\Vert_{{\bm L}_p(-r_j,0;\R)}.
\end{equation*}
We introduce a history segment
\begin{equation*}\label{segment1}
[-r_j,0]\ni \theta\mapsto \tilde{u}_j^t(\theta)\coloneqq \tilde{u}_j(t+\theta), \quad 1\le j\le m,
\end{equation*}
wherein the tilde indicates that the flows are to be evaluated at the points $x=\ell_j$, i.e., $\tilde{u}_j(t)=u_j(t,\ell_j)$ for every $t\in\R_+$. In particular, for $1\le j\le m$, the function $t\mapsto\tilde{u}_j^t(t)$ satisfies \eqref{transport-eq}. The initial/boundary-value problem \eqref{transport-equation}--\eqref{transport-equation3} can be formulated as an abstract system on $X=X_1\times X_2$, the product space formed by the state space,
\begin{equation*}
\dot{y}(t)=\calA y(t),\quad t> 0,\quad y(0)=((f_j)_{j=1}^m, (\varphi_j)_{j=1}^m)^\top,
\end{equation*} 
where $y(t)=((u_j(t,\cdot))_{j=1}^m,(\tilde{u}_j^t)_{j=1}^m)^\top$ and the system operator $\calA$ on $X$ is defined by 
\begin{equation}\label{cauchy-pb-ope}
\begin{split}
\calA &\coloneqq \operatorname{diag}(- V(\cdot)\partial_x, \partial_\theta),\\
{\bm D}(\calA) & \coloneqq \left\{\begin{gathered}(f,\varphi)^\top\in \prod_{j=1}^m {\bm W}_p^1(0,\ell_j;\R)\times \prod_{j=1}^m {\bm W}_p^1(-r,0;\R):\\ \Gamma V(0)f(0)=(\varphi_j(-r_j))_{j=1}^m,~
\varphi(0)=\mathbb{M}\Xi \operatorname{diag}(v_k(\ell_{k}))_{k=1}^m (f_k(\ell_k))_{k=1}^m 
\end{gathered}\right\}, 
\end{split}
\end{equation}
wherein $V(\cdot)\coloneqq \operatorname{diag}(v_j(\cdot))_{j=1}^m$, $\Gamma\coloneqq \operatorname{diag}(\gamma_j)_{j=1}^m$, $\mathbb{M}\coloneqq (\sigma_{jk})_{j,j=1}^m$, and $\Xi\coloneqq \operatorname{diag}(\xi_j)_{j=1}^m$.

We are now in a position to prove the main result of this subsection.
\begin{theorem}\label{stab-trp-sys}
For $1\le j\le m$, suppose $v_j\in {\bm L}_\infty(0,\ell_j)$ with $v_j(x)>v_0>0$ for a.e.\ $x\in [0,\ell_j]$. Suppose further that $\gamma_j,\xi_j,\sigma_{jk}>0$ for $1\le j,k\le m$. If
\begin{equation}\label{stability-condition}
r(\mathbb{M}\Xi \operatorname{diag}(v_k(\ell_{k}))_{k=1}^m V^{-1}(0)\Gamma^{-1})<1,
\end{equation}
then there exist constants $J,\alpha>0$ such that for all $t\in\R_+$ and any $f\in X_1$, $\varphi\in X_2$,
\begin{equation}\label{solution-decay}
\Vert u(t)\Vert_{X_1}+\Vert \tilde{u}^t\Vert_{X_2}\le J e^{-\alpha t}\left(\Vert f\Vert_{X_1}+\Vert \varphi\Vert_{X_2}\right).
\end{equation}
\end{theorem}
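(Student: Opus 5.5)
The plan is to recast \eqref{transport-equation}--\eqref{transport-equation3} as an $\mathbf{aBCS}$ on $X=X_1\times X_2$ with $U_1=U_2=\R^m$, and then apply Theorems \ref{Result1} and \ref{Result2} (the perturbation approach). That route yields the spectrum-determined growth condition $\omega_0(\calT)=s(\calA)$, which is exactly what is needed to get the uniform estimate \eqref{solution-decay} for all $f\in X_1$, $\varphi\in X_2$.

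\textbf{Step 1: identify the coupling operators.} Reading off \eqref{cauchy-pb-ope}, I set
\begin{equation*}
A_1\coloneqq -V(\cdot)\partial_x,\quad G_1f\coloneqq \Gamma V(0)f(0),\quad K_1f\coloneqq \mathbb{M}\Xi\operatorname{diag}(v_k(\ell_k))_{k=1}^m (f_k(\ell_k))_{k=1}^m,
\end{equation*}
on $\prod_j\bm{W}^1_p(0,\ell_j;\R)$. Likewise $A_2\coloneqq\partial_\theta$, $G_2\varphi\coloneqq\varphi(0)$ and $K_2\varphi\coloneqq(\varphi_j(-r_j))_{j=1}^m$ on $\prod_j\bm{W}^1_p(-r_j,0;\R)$. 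With these choices $\bm{D}(\calA)$ is precisely $\{G_1f=K_2\varphi,\ G_2\varphi=K_1f\}$.

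\textbf{Step 2: check the hypotheses of Theorem \ref{Result1}.} I take the point values $v_j(0)$, $v_j(\ell_j)$ in the trace sense, as the statement implicitly does. The operators $G_k$ are onto and $\ker G_k$ is dense. The operator $K_2$ is positive, and $K_1$ is positive because $\sigma_{jk},\xi_j>0$ and $v_j>v_0>0$. The operator $\tilde A_1$ is a transport operator with zero inflow, and $\tilde A_2=Q$ is the left shift. Both generate positive nilpotent semigroups, hence are resolvent positive with $s(\tilde A_1)=s(\tilde A_2)=-\infty$.

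Rather than verify the resolvent lower bound in \ref{cond2} literally, I note that in the proof of Theorem \ref{Result1} that bound serves only to show that $\tilde A$ generates a positive $C_0$-semigroup. That is known directly here, so I will run that proof from this point on, invoking \cite[Theorem 1]{MR4711370}.

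\textbf{Step 3: compute the Dirichlet operators and $\A_\lambda$.} Solving $\lambda f+Vf'=0$ with $\Gamma V(0)f(0)=u$ gives
\begin{equation*}
(D^1_\lambda u)_j(x)=\frac{e^{-\lambda\int_0^x v_j^{-1}}}{\gamma_j v_j(0)}\,u_j .
\end{equation*}
Similarly $D^2_\lambda=\varepsilon_\lambda$. Both are positive for real $\lambda$. With $\tau_k\coloneqq\int_0^{\ell_k}v_k^{-1}$ this yields
\begin{equation*}
\A_\lambda=\mathbb{M}\Xi\operatorname{diag}(v_k(\ell_k))\operatorname{diag}\big(e^{-\lambda(\tau_k+r_k)}\big)V^{-1}(0)\Gamma^{-1}.
\end{equation*}
In particular $\A_0$ is exactly the positive matrix appearing in \eqref{stability-condition}.

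\textbf{Step 4: conclude.} Condition \eqref{stability-condition} gives $r(\A_0)<1$. The map $\lambda\mapsto\A_\lambda$ is entrywise decreasing (Remark \ref{rem.chara}), so \eqref{spectral.cond} holds with $\lambda_1=0>-\infty$. Theorem \ref{Result1} then gives generation of a positive semigroup $\calT$ with $\omega_0(\calT)=s(\calA)$. Theorem \ref{Result2}, combined with Proposition \ref{Spectrum}, gives $\omega_0(\calT)<0$, and this is \eqref{solution-decay} for the components $u(t)$ and $\tilde u^t$.

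The main obstacle is the justification in Step 2: either confirming hypothesis \ref{cond2} for the nilpotent transport and shift generators, or arguing carefully that only the generation of $\tilde A$ is needed in the proof of Theorem \ref{Result1}. A smaller technical point is making sense of $v_j(0)$ and $v_j(\ell_j)$ for $v_j\in\bm{L}_\infty$.
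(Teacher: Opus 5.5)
Your Steps 1 and 3 are fine; in particular $\A_0$ is exactly the matrix in \eqref{stability-condition}. The argument breaks at Step 2, and with it the appeal to Theorems \ref{Result1} and \ref{Result2} in Step 4. Neither of the two escape routes you list is available.

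First, hypothesis \ref{cond2} is not merely hard to check here; it is false. Take $\varphi=\1_{[-r_j,-r_j+\varepsilon]}$ in the $j$-th component. Then $(R(\lambda,\tilde A_2)\varphi)(\theta)=\int_\theta^0 e^{\lambda(\theta-s)}\varphi(s)\,ds$, which for $\lambda\ge 0$ is supported in $[-r_j,-r_j+\varepsilon]$ and bounded by $\varepsilon$. Hence $\|R(\lambda,\tilde A_2)\varphi\|\le\varepsilon\|\varphi\|$. The same happens for $\tilde A_1$ with data concentrated near the outflow points $x=\ell_j$, with $\varepsilon/v_0$ in place of $\varepsilon$. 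Mass leaves these nilpotent flows arbitrarily fast, so no $c_k>0$ exists.

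Second, the bound does not serve only to make $\tilde A$ a generator. Once it is dropped, the perturbation argument gives only
$R(\lambda,\calA)=(I-D_\lambda K)^{-1}R(\lambda,\tilde A)\ge R(\lambda,\tilde A)\ge 0$, i.e.\ $\calA$ is densely defined and resolvent positive. That does not imply generation of a $C_0$-semigroup, even on $\ell^1$. For example, the block-diagonal operator $N$ with blocks $\left(\begin{smallmatrix}-n&0\\ n^2&-n\end{smallmatrix}\right)$ on its maximal domain is densely defined and resolvent positive on $\ell^1$, yet $\|nR(n,N)\|\ge n/4$, violating Hille--Yosida.

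The lower bound is exactly what closes this gap. Through $R(\lambda,\calA)\ge R(\lambda,\tilde A)$ it passes to $\calA$, so that Arendt's criterion \cite[Theorem 2.5]{MR872810} applies to $\calA$ itself. This is why the paper says the perturbation approach needs resolvent-inverse estimates for the decoupled subsystems. Knowing independently that $\tilde A$ generates therefore does not entitle you to use \cite[Theorem 1]{MR4711370}. Both the generation of $\calA$ and the identity $\omega_0(\calT)=s(\calA)$ that you take from Theorem \ref{Result1} remain unproved.

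The paper avoids this by obtaining generation from the feedback route, Theorem \ref{theorem-dynamic-boundary}:
\begin{itemize}
\item It checks that $(\tilde A_1,B_1,C_1)$ and $(\tilde A_2,B_2,C_2)$ are positive $\bm{L}_p$-well-posed, uniformly regular triples, using explicit input maps, admissibility estimates, and $\mathbf{H}_1(\lambda)=K_1D^1_\lambda\to 0$.
\item It observes that $\F_1$, and hence $\F_1\F_2$, is a pure delay by at least $\min_j\tau_j(\ell_j)$. So $I-\F_1\F_2=I$ on $\bm{L}_p(0,\alpha_0;\R^m)$ for small $\alpha_0$, and $I$ is an admissible positive feedback.
\item Exponential stability then follows from Corollary \ref{Cor2}, i.e.\ Proposition \ref{Spectrum} with $s(\tilde A_1)=s(\tilde A_2)=-\infty$ and your computation $r(\A_0)<1$.
\item The spectrum-determined growth condition comes from the corresponding result for positive semigroups on $\bm{L}_p$-spaces \cite{MR1469440}, not from Theorem \ref{Result1}.
\end{itemize}
If you replace the generation step in your Steps 2 and 4 by this argument, the rest of your proposal goes through.
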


\begin{proof}
We apply Corollary \ref{Cor2} with $U_1=U_2=\R^m$. To this end, let us define
\begin{equation*}
A_1 f\coloneqq - V(\cdot)\partial_x f, \quad A_2 \varphi\coloneqq \partial_\theta \varphi
\end{equation*}
for any $f\in {\bm D}(A_1)\coloneqq \prod_{j=1}^m {\bm W}_p^1(0,\ell_j;\R)$ and $\varphi\in {\bm D}(A_2)\coloneqq \prod_{j=1}^m{\bm W}_p^1(-r_j,0;\R)$. Let us further define the boundary operators
\begin{gather*}
G_1f\coloneqq \Gamma V(0)f(0),\quad K_1f\coloneqq \mathbb{M}\Xi\operatorname{diag}(v_k(\ell_{k}))_{k=1}^m (f_k(\ell_k))_{k=1}^m,\\
G_2\varphi\coloneqq \varphi(0),\quad K_2 \varphi\coloneqq (\varphi_j(-r_j))_{j=1}^m
\end{gather*} 
for any $f\in {\bm D}(A_1)$ and $\varphi \in {\bm D}(A_2)$. With these definitions we next show that the system operator $\mathcal{A}$ defined by \eqref{cauchy-pb-ope}, now with domain 
\begin{equation*}
\calA \coloneqq \operatorname{diag}(A_1,A_2),\quad {\bm D}(\calA)\coloneqq \left\{(f, \varphi)^\top\in {\bm D}(A_1)\times {\bm D}(A_2): G_1 f=K_2 \varphi,~ G_2 \varphi=K_1 f \right\},
\end{equation*}
generates a positive $C_0$-semigroup on $X$. By assumption, $v_j(\cdot)>0$ for $1\le j\le m$; hence, for $1\le j\le m$, the function $[0,\ell_j]\ni x\mapsto\tau_j(x)\coloneqq \int_{0}^{x}\frac{1}{v_j(x)}dx$ is invertible for $x\in [0,\ell_j]$. Then, by a generation argument similar to the one found in \cite{MR2328116}, it can be shown that the operator restriction $\tilde{A}_1\coloneqq A_1\vert_{\ker G_1}$ generates a positive $C_0$-semigroup $T_1\coloneqq (T_1(t))_{t\in\R_+}$ on $X_1$ given by 
\begin{equation*}
(T_1(t)f)_j(x)=\left\{\begin{split}
f_j(\t_j^{-1}(\t_j(x)-t))&,&& t \leq \tau_j(x),&& 1\le j\le m,\\
0&,&&\text{otherwise},&& 1\le j\le m,
\end{split}\right.
\end{equation*}
for any $f\in X_1$ and a.e.\ $x\in [0,\ell_j]$. Since $\gamma_j,v_j(\cdot)>0$ for $1\le j\le m$, it follows that the operator $G_1$ is onto. Moreover, formal calculation shows that the Dirichlet operator associated with the pair $(A_1,G_1)$ is given by 
\begin{equation*}
D_\la^1 \mathsf{a}={\rm diag}(e^{-\la\tau_j(\cdot)})_{j=1}^m V^{-1}(0) \Gamma^{-1} \mathsf{a}
\end{equation*}
for all $\la\in \C$ and any $\mathsf{a}\in \R^m$. We put $B_1\coloneqq -(\tilde{A}_1)_{-1}D_0^1\in\calB(\R^m,(X_1)_{-1}^{\tilde{A}_1})$. Using Laplace transformation techniques, it is then easily shown that for all $t\in\R_+$ the input map corresponding to the pair $(\tilde{A}_1,B_1)$ is given by 
\begin{equation*}
(\Phi^{1}_t g)_j(x)=\left\{\begin{split}
\frac{g_j(t-\tau_j(x))}{\gamma_j v_j(0)}&,&& t\ge \tau_j(x),&& 1\le j\le m,\\
 0&,&&\text{otherwise},&& 1\le j\le m,
\end{split}\right.
\end{equation*}
for any $g\in {\bm L}_p(\R_+,\R^m)$ and a.e.\ $x\in [0,\ell_j]$. Clearly, $\Phi^{1}_t$ is positive for every $t\in\R_+$, and therefore $B_1$ is positive (cf.\ \cite[Lemma 2.2]{elgantouh2025admissibility}). Moreover, we have 
\begin{equation*}
\Vert \Phi^{1}_t g\Vert_{X_1}\le \frac{1}{v_0 \underline{\gamma}}\Vert g\Vert_{{\bm L}_p(0,t;\R^m)}
\end{equation*}
for all $t\in\R_+$ and any $g\in {\bm L}_{p,+}(\R_+,\R^m)$, where $\underline{\gamma}\coloneqq \inf_{1\le j\le m} \gamma_j$. Thus the pair $(\tilde{A}_1,B_1)$ is positive $\bm{L}_p$-admissible. Next set $C_1\coloneqq K_1\vert_{{\bm D}(\tilde{A}_1)}$. Choosing $\alpha\ge \sup_{1\le j\le m}\tau_j(\ell_j)$, we obtain that for any $f\in {\bm D}_+(\tilde{A}_1)$,
\begin{equation*}
\int_{0}^{\alpha} \Vert C_1 T_1(t)f\Vert^p_{\R^m} dt \le (\Vert\mathbb{M}\Vert_{\R^m}\bar{\xi}\bar{v})^p\Vert f\Vert_{X_1}^p,
\end{equation*}
where $\bar{v}\coloneqq \sup_{1\le j\le m}\Vert v_j\Vert_\infty$ and $\bar{\xi}\coloneqq \sup_{1\le j\le m} \xi_j$. Hence, the pair $(C_1,\tilde{A}_1)$ is positive $\bm{L}_p$-admissible. Now we define the operator
\begin{equation*}
(\mathbb{F}_1g)(t)\coloneqq K_1\Phi^{1}_tg
\end{equation*}
for all $t\in[0,\alpha]$, $\alpha\ge \sup_{1\le j\le m}\tau_j(\ell_j)$, and any $g\in \mathring{\bm{W}}^{1}_{p,+}(0,\alpha;\R^m)$. Then,
\begin{equation}\label{input-output-L}
(\mathbb{F}_1g)_j(t)= \left\{\begin{split}
\sum_{k=1}^{m} \frac{\sigma_{jk}\xi_kv_k(\ell_k)}{\gamma_k v_k(0)}g_{k}(t-\tau_{k}(\ell_{k}))&,&&t\ge \tau_k(\ell_k),&& 1\le j\le m,\\
0&,&&\text{otherwise},&& 1\le j\le m.
\end{split}\right.
\end{equation}
Hence, for any $g\in \mathring{\bm{W}}^{1}_{p,+}(0,\alpha;\R^m)$,
\begin{equation*}
\int_{0}^{\alpha} \Vert (\mathbb{F}_1g)(t)\Vert^p_{\R^m} dt\le \left(\frac{\Vert\mathbb{M}\Vert_{\R^m}\bar{\xi}\bar{v}}{v_0 \underline{\gamma}}\right)^p \Vert g\Vert_{L^p(0,\alpha;\R^m)}^p,
\end{equation*}
where we have applied the Hölder inequality, and it follows that the triple $(\tilde{A}_1,B_1,C_1)$ is a positive $\bm{L}_p$-well-posed triple on $(\R^m,X_1,\R^m)$. Furthermore, the transfer function of $(\tilde{A}_1,B_1,C_1)$ is explicitly given by 
\begin{equation*}
\mathbf{H}_1(\la)=K_1 D_\la^1= \mathbb{M}\Xi{\rm diag}(v_k(\ell_{k}))_{k=1}^m {\rm diag}(e^{-\la \tau_j(\ell_j)})_{j=1}^mV^{-1}(0) \Gamma^{-1}
\end{equation*}
and, therefore,
\begin{equation*}
\lim_{\la \to +\infty} \Vert \mathbf{H}_1(\la)\Vert_{\calB(\R^m)}=0.
\end{equation*}
Hence, the triple $(\tilde{A}_1,B_1,C_1)$ is a positive $\bm{L}_p$-well-posed uniformly regular triple with zero feedthrough in the sense of Definition \ref{regularity}. Let now $\tilde{A}_2$ be the restriction of $A_2$ to $\ker G_2$. Then $\tilde{A}_2$ generates a left-shift semigroup $T_2\coloneqq (T_2(t))_{t\in\R_+}$ on $X_2$ given by
\begin{equation*}
(T_2(t)\varphi)_j(\theta)=\varphi_j(t+\theta)\1_{-r_j\le\theta\le-t},\quad 1\le j\le m,
\end{equation*}
for any $\varphi\in X_2$ and a.e.\ $\theta\in [-r_j,0]$, with $\1_{-r_j\le\theta\le-t}$ the indicator function which is $1$ for $-r_j\le\theta\le-t$. Moreover, the operator $G_2$ being onto, a direct calculation yields that the Dirichlet operator associated with the pair $(A_2,G_2)$ is given by
\begin{equation*}
(D_\la^2 \mathsf{a})_j(\theta)=e^{\la \theta}\mathsf{a}_j,\quad 1\le j\le m,
\end{equation*}
for all $\la \in \C$, any $\mathsf{a}\in \R^m$, and a.e.\ $\theta\in [-r_j,0]$. In particular, choosing
\begin{equation*}
B_2\coloneqq -(\tilde{A}_2)_{-1}D_0^2\in\calB(\R^m,(X_2)_{-1}^{\tilde{A}_2}),\quad C_2\coloneqq K_2\vert_{{\bm D}(\tilde{A}_2)},
\end{equation*}
we obtain, by essentially the same arguments as above, that the triple $(\tilde{A}_2,B_2,C_2)$ is a positive $\bm{L}_p$-well-posed uniformly regular triple on $(\R^m,X_2,\R^m)$ with zero feedthrough. Moreover, for all $t\in [0,\alpha]$, $\alpha\ge \sup_{1\le j\le m}r_j$, and any $h\in \mathring{\bm{W}}^{1}_{p,+}(0,\alpha;\R^m)$,
\begin{equation*}
(\mathbb{F}_2h)_j(t)= \left\{\begin{split}
h_j(t-r_j)&,&& t\ge r_j,&& 1\le j\le m,\\
 0&,&&\text{otherwise},&& 1\le j\le m.
\end{split}\right.
\end{equation*}
Note that $\mathbb{F}_2h \in \mathring{\bm{W}}^{1}_{p,+}(0,\alpha;\R^m)$ and, choosing $\alpha_0<\inf_{1\le j\le m}\tau_j(\ell_j)$, it follows from \eqref{input-output-L} that
\begin{equation*}
(I-\mathbb{F}_1\mathbb{F}_2)h=h
\end{equation*}
for any $h\in \mathring{\bm{W}}^{1}_{p,+}(0,\alpha_0;\R^m)$. In fact, by density, we obtain that
\begin{equation*}
I-\mathbb{F}_1\mathbb{F}_2=I\in\calB({\bm L}_p(0,\alpha_0;\R^m)),
\end{equation*} 
$I-\mathbb{F}_1\mathbb{F}_2$ is therefore invertible, and consequently the spectral radius $r(\mathbb{F}_1\mathbb{F}_2)<1$. Hence, we conclude from Theorem \ref{theorem-dynamic-boundary} that the system operator $\calA$ as defined by \eqref{cauchy-pb-ope} generates a positive $C_0$-semigroup $\calT\coloneqq (\calT(t))_{t\in\R_+}$ on $X$. Since $s(\tilde{A}_1)=s(\tilde{A}_2)=-\infty$, Corollary \ref{Cor2} now implies that \eqref{stability-condition} yields \eqref{solution-decay}. Also, as implied by the results \cite{MR1469440}, it follows that $s(\calA)=\omega_0(\calA)$. This completes the proof.
\end{proof}

\begin{remark}
It is interesting to note that Theorem \ref{stab-trp-sys} provides a spectral characterization of exponential stability for the initial/boundary-value problem \eqref{transport-equation}--\eqref{transport-equation3} that is independent of both the cycle lengths and time delays. A similar result has been established very recently in \cite{EZZL} for a variant of \eqref{transport-equation}--\eqref{transport-equation3}, using a different method.
\end{remark}

As an application of Theorem \ref{stab-trp-sys} we next impose a Kirchhoff type condition on the coupling coefficients $\sigma_{jk}$ by adjoining to \eqref{transport-equation}--\eqref{transport-equation3} the condition
\begin{equation}\label{Kirchhoff}
\sum_{k=1}^m \sigma_{jk}=1, \quad 1 \le j\le m.
\end{equation}

\begin{corollary}
Let the assumptions of Theorem \ref{stab-trp-sys} be satisfied and suppose that the condition \eqref{Kirchhoff} holds. If 
\begin{equation}\label{C2}
\max_{k}\frac{\xi_kv_k(\ell_k)}{\gamma_kv_k(0)}<1,
\end{equation}
then solutions to the initial/boundary-value problem \eqref{transport-equation}--\eqref{transport-equation3} decay to zero exponentially.
\end{corollary}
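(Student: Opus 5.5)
The plan is to reduce the corollary to Theorem \ref{stab-trp-sys}. All of its hypotheses are assumed, so it suffices to verify the spectral condition \eqref{stability-condition}. The estimate \eqref{solution-decay} then gives exponential decay of $\Vert u(t)\Vert_{X_1}+\Vert\tilde u^t\Vert_{X_2}$ for all initial data $f\in X_1$, $\varphi\in X_2$.

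First I simplify the matrix in \eqref{stability-condition}. All the factors other than $\mathbb{M}$ are diagonal, so
\begin{equation*}
\mathbb{P}\coloneqq \mathbb{M}\Xi \operatorname{diag}(v_k(\ell_{k}))_{k=1}^m V^{-1}(0)\Gamma^{-1}=\mathbb{M}\,\operatorname{diag}(\rho_k)_{k=1}^m,\qquad \rho_k\coloneqq \frac{\xi_kv_k(\ell_k)}{\gamma_kv_k(0)}>0.
\end{equation*}
Its entries are $\mathbb{P}_{jk}=\sigma_{jk}\rho_k$, and all of them are positive because $\sigma_{jk},\xi_k,\gamma_k>0$ and $v_k>v_0>0$.

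The key step is to bound $r(\mathbb{P})$ by the maximum row sum. For a nonnegative matrix, $r(\mathbb{P})\le \Vert \mathbb{P}\Vert_{\infty}=\max_j\sum_k \mathbb{P}_{jk}$. Using \eqref{Kirchhoff} I get
\begin{equation*}
\sum_{k=1}^m\sigma_{jk}\rho_k\le \Big(\max_k\rho_k\Big)\sum_{k=1}^m\sigma_{jk}=\max_k\rho_k<1
\end{equation*}
by \eqref{C2}. Hence $r(\mathbb{P})<1$, which is \eqref{stability-condition}. Theorem \ref{stab-trp-sys} then yields constants $J,\alpha>0$ with the decay estimate \eqref{solution-decay}, and this is the claimed exponential decay to zero.

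The only delicate point is the interpretation of the point values $v_k(0)$ and $v_k(\ell_k)$ when $v_k\in\bm L_\infty$. These are already used in Theorem \ref{stab-trp-sys}, so I take them in the same sense as there, and no new obstacle arises. If a sharper alternative to the row-sum bound is wanted, one can instead use Perron--Frobenius: $\mathbb{M}$ is row-stochastic, so $\mathbb{M}\mathbf{1}=\mathbf{1}$, and $\mathbb{P}\le(\max_k\rho_k)\mathbb{M}$ entrywise, which gives $r(\mathbb{P})\le \max_k\rho_k\, r(\mathbb{M})=\max_k\rho_k$ by monotonicity of the spectral radius for nonnegative matrices.
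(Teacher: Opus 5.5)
Your proposal is correct and follows the paper's proof essentially step for step. You write the matrix in \eqref{stability-condition} as $\mathbb{M}\operatorname{diag}(\rho_k)_{k=1}^m$, bound its spectral radius by the maximal row sum, use \eqref{Kirchhoff} and \eqref{C2} to get a bound strictly below one, and then invoke Theorem \ref{stab-trp-sys}; your Perron--Frobenius variant is an equally valid way to obtain the same bound.
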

\begin{proof}
Let $d_k = \frac{\xi_kv_k(\ell_k)}{\gamma_kv_k(0)}$, $1 \le k\le m$. Then
\begin{equation*}
\mathbb{M}\Xi \, {\rm diag}(v_k(\ell_{k}))_{k=1}^m V^{-1}(0)\Gamma^{-1}=\mathbb{M}\, {\rm diag}(d_k )_{k=1}^m,
\end{equation*}
where $\mathbb{M}$ is by \eqref{Kirchhoff} a row-stochastic matrix whose entries are positive. Since the spectral radii $r\left( \mathbb{M} \, {\rm diag}(d_k) \right)$ have the property that 
\begin{equation*}
r\left( \mathbb{M} \, {\rm diag}(d_k) \right) 
\le \max_{1 \le j\le m} \sum_{k=1}^m \sigma_{jk} d_k,
\end{equation*}
we have, using \eqref{C2},
\begin{equation*}
\sum_{k=1}^m \sigma_{jk} d_k 
\le \max_k d_k \sum_{k=1}^m \sigma_{jk} 
= \max_k d_k < 1.
\end{equation*}
Hence, for $1 \le k\le m$,
\begin{equation*}
r\left( \mathbb{M} \, {\rm diag}(d_k) \right) < 1.
\end{equation*}
The result then follows from Theorem \ref{stab-trp-sys}.
\end{proof}

\subsection{Mixed heat-transport equations on a cyclic network}

In this subsection we consider a general network related to the special case depicted in Fig.\ \ref{fig01}, namely, a network of $m\ge 2$ mixed heat-transport equations
\begin{equation}\label{transport-heat}
\dot{u}_j(t,x)=\ka_j \partial_{xx}u_j(t,x)-\beta_ju_j(t,x),\quad
\dot{v}_j(t,x)=- c_j(x)\partial_{x}v_j(t,x),\quad 1 \le j\le m,
\end{equation}
together with conditions imposed at the junction point that specify the coupling at the vertex,
\begin{equation}\label{transport-heat-BC}
\begin{gathered}
u_i(t,0)=u_j(t,0),\quad-\sum_{j=1}^{m}\ka_j\partial_{x}u_j(t,0)=\sum_{j=1}^{m}c_j(\ell_j)v_j(t,\ell_j),\\
\ka_j\partial_{x}u_j(t,\alpha_j\ell_j)=0,\quad c_j(\alpha_j\ell_j)v_j(t,\alpha_j\ell_j)= u_j(t,\alpha_j\ell_j),\quad 1 \le i,j\le m,
\end{gathered}
\end{equation}
and the given initial conditions
\begin{equation}\label{transport-heat-IC}
u_j(x,0)=g_j^0(x),\quad v_j(x,0)=f^0_j(x),\quad 1 \le j\le m.
\end{equation}
Here ${u}_j(t,x)$ describes the temperature of a bar or wire on the $j$-th cycle for $(t,x)\in \R_+\times [0,\alpha_j\ell_j]$, $v_j(t,x)$ is the flow of particles on the $j$-th cycle for $(t,x)\in \R_+\times[\alpha_j\ell_j,\ell_j]$, the parameter $\ell_j>0$ again is the length of each cycle, the $\alpha_j\in (0,1)$ are some scaling parameters, the functions $c_j(\cdot)$ are the particle-flow velocities, and the parameters $\kappa_j$, $\beta_j$ are the thermal conductivity and absorption coefficients of the $j$-th cycle, respectively.

\begin{figure}[htb]
\centering
\includegraphics[width=0.4\linewidth]{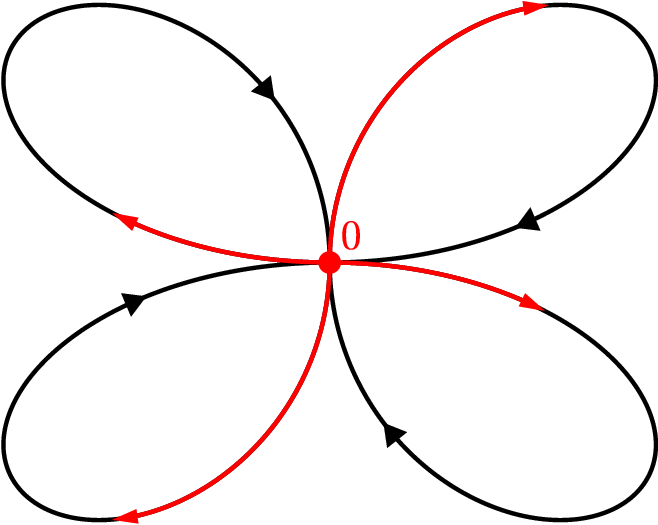}
\caption{Mixed heat-transport equations on a network consisting of four cycles: heat equations (${\color{red}\full}$) and transport equations (${\color{black}\full}$). \label{fig01}}
\end{figure}

In the following we investigate the problems of well-posedness and exponential stability of solutions of our initial/boundary-value problem \eqref{transport-heat}--\eqref{transport-heat-IC}. To do this we introduce the Banach spaces
\begin{equation*}
X_1\coloneqq \prod_{j=1}^m \bm{L}_1(0,\alpha_j\ell_j),\quad X_2\coloneqq \prod_{j=1}^m \bm{L}_1(\alpha_j\ell_j,\ell_j)
\end{equation*}
and define the operators $A_1$ and $A_2$ in $X_1$ and $X_2$, respectively, by
\begin{equation}\label{system.operators}
A_1g\coloneqq (\kappa_j\partial_{xx}g_j-\beta_jg_j)_{j=1}^m,\quad 
A_2f\coloneqq (-c_j(\cdot)\partial_{x}f_j)_{j=1}^m,
\end{equation}
with domains 
\begin{equation}\label{system.operators_dom}
\begin{split}
{\bm D}(A_1)&\coloneqq \left\{g\in \prod_{j=1}^m \bm{W}_1^2(0,\alpha_j\ell_j):g_i(0)=g_j(0),~\ka_j\partial_{x}g_j(\alpha_j\ell_j)=0,~1 \le i,j\le m\right\},\\
{\bm D}(A_2)&\coloneqq \prod_{j=1}^m \bm{W}_1^1(\alpha_j\ell_j,\ell_j).
\end{split}
\end{equation}
We also introduce the boundary coupling operators $G_k$, $K_k$, $k=1,2$: 
\begin{equation}\label{eq:1}
\begin{gathered}
G_1g\coloneqq -\sum_{j=1}^{m}\ka_j\partial_{x}g_j(0),\quad K_1g\coloneqq (g_j(\alpha_j\ell_j))_{j=1}^m,\\
 G_2f\coloneqq (c_j(\alpha_j\ell_j)f_j(\alpha_j\ell_j))_{j=1}^m,\quad K_2f\coloneqq \sum_{j=1}^{m}c_j(\ell_j)f_j(\ell_j),
\end{gathered}
\end{equation}
for any $g\in {\bm D}(A_1)$, $f\in {\bm D}(A_2)$. With these definitions in place we obtain the representation by the $\mathbf{aBCS}$ for our initial/boundary-value problem \eqref{transport-heat}-\eqref{transport-heat-IC} with $U_1=\R$ and $U_2=\R^m$, wherein
\begin{equation*}
 z_1(t)= (u_j(t,\cdot))_{j=1}^m,\quad z_2(t)= (v_j(t,\cdot))_{j=1}^m, \quad x_1= (g_j^0)_{j=1}^m, \quad x_2= (f_j^0)_{j=1}^m.
\end{equation*}

In order to apply our abstract results in Theorems \ref{Result1} and \ref{Result2} to this specific situation, we need some preliminary lemmas.
\begin{lemma}\label{heat-star-exp-stab}
Let $\ka_j,\beta_j>0$ for $1 \le j\le m$. Then the operator restriction
\begin{equation*}
\tilde{A}_1\coloneqq A_1\vert_{\ker G_1}
\end{equation*}
is densely defined and resolvent positive. For any $h\in (X_1)_+$, all $\lambda\ge 0$, and $\bar{\beta}:=\sup_{1\le j\le m}\beta_j$, the following estimate is valid:
\begin{equation}\label{invers-estimate}
\Vert R(\lambda,\tilde{A}_1)h\Vert_{X_1}\ge \frac{1}{\lambda+\bar{\beta}}\Vert h\Vert_{X_1}.
\end{equation}
Furthermore, $\tilde{A}_1$ generates an exponentially stable positive $C_0$-semigroup $T_1\coloneqq (T_1(t))_{t\ge 0}$ on $X_1$.
\end{lemma}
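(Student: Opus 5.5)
The plan is to treat $\tilde{A}_1$ as the Laplacian with absorption on a star graph in $\bm{L}_1$, with Neumann--Kirchhoff conditions: continuity at the vertex $0$, $\sum_j\ka_j\partial_x g_j(0)=0$, and Neumann conditions at the endpoints $\alpha_j\ell_j$. The key identity comes from integrating the resolvent equation against the constant function $1$, using that the Kirchhoff and Neumann conditions make the total flux vanish.

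\emph{Density.} Take $g$ with each $g_j\in C_c^\infty(0,\alpha_j\ell_j)$. All the boundary conditions in ${\bm D}(A_1)$ and in $\ker G_1$ are then trivially satisfied. Since $\prod_j C_c^\infty(0,\alpha_j\ell_j)$ is dense in $X_1$, $\tilde A_1$ is densely defined.

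\emph{Resolvent positivity.} For $\lambda>-\underline{\beta}$ with $\underline{\beta}:=\min_j\beta_j>0$ and $h\in X_1$, I solve on each edge
\[
(\lambda+\beta_j)g_j-\ka_j g_j''=h_j .
\]
Variation of constants gives $g_j=\mathrm{particular}_j+a_j\cosh(\mu_j x)+b_j\sinh(\mu_j x)$ with $\mu_j=\sqrt{(\lambda+\beta_j)/\ka_j}$. The $2m$ conditions (the $m$ Neumann conditions, $m-1$ continuity conditions and one Kirchhoff condition) give a square linear system for the constants $(a_j,b_j)$. Uniqueness, hence solvability, of this system will follow from the dispersivity argument below; alternatively one can check the determinant directly, and it is nonzero since all $\mu_j>0$.

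For positivity, let $h\ge0$ and test the equation with $g_j^-$ (or $\mathrm{sign}(g_j^-)$ in the $\bm{L}_1$ version, via Kato's inequality $\int \mathrm{sign}(g)g''\le$ boundary terms). Summing over $j$, the boundary terms are
\[
\sum_j\ka_j\bigl[g_j'\,\mathbb 1_{\{g_j<0\}}\bigr]_0^{\alpha_j\ell_j},
\]
and these vanish: the Neumann condition kills the endpoint terms, and since all $g_j(0)$ coincide, the indicator at $0$ is the same on every edge, so the Kirchhoff condition kills the vertex terms. This yields
\[
\sum_j(\lambda+\beta_j)\|g_j^-\|\le 0,
\]
so $g\ge0$. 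I expect this $\bm{L}_1$ Kato-type argument with the common vertex value to be the main technical step. If it is awkward, I would instead use the $H^1$ weak formulation with test function $g^-$, which is admissible because continuity at the vertex is preserved under taking negative parts; in that formulation the boundary terms vanish by construction and $H^1$ regularity suffices here. The same computation with $|g|$ shows $(\lambda+\underline\beta)\|g\|\le\|h\|$. This gives uniqueness, hence the invertibility used above.

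\emph{Estimate \eqref{invers-estimate}.} Let $h\ge0$ and $g=R(\lambda,\tilde A_1)h\ge0$. Integrate the $j$th equation over $(0,\alpha_j\ell_j)$ and sum over $j$:
\[
\sum_j(\lambda+\beta_j)\int g_j-\sum_j\ka_j\bigl(g_j'(\alpha_j\ell_j)-g_j'(0)\bigr)=\sum_j\int h_j .
\]
The flux sum is $0$ by the Neumann conditions and $G_1g=0$. Since $g,h\ge0$, $\int g_j=\|g_j\|$ and $\int h_j=\|h_j\|$, so
\[
\|h\|_{X_1}=\sum_j(\lambda+\beta_j)\|g_j\|\le(\lambda+\bar\beta)\|g\|_{X_1},
\]
which is \eqref{invers-estimate}.

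\emph{Generation and stability.} Since $\tilde A_1$ is densely defined and resolvent positive, and satisfies the lower estimate \eqref{invers-estimate}, \cite[Theorem 2.5]{MR872810} (as used in the proof of Theorem \ref{Result1}) shows that $\tilde A_1$ generates a positive $C_0$-semigroup $T_1$. The same identity gives $\|R(\lambda,\tilde A_1)h\|\le\|h\|/(\lambda+\underline\beta)$ for $h\ge0$, so the resolvent stays positive and bounded on $(-\underline\beta,\infty)$. By \cite[Theorem C-III.1.1]{MR839450}, this forces $s(\tilde A_1)\le-\underline\beta<0$. On $\bm{L}_1$, positive semigroups satisfy $s=\omega_0$ \cite{MR1469440}, hence $\omega_0(T_1)<0$ and $T_1$ is exponentially stable.
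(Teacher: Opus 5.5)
Your proof is correct, but the central step, invertibility and positivity of $\lambda I-\tilde A_1$, takes a different route from the paper.

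\textbf{The paper's route.} It computes the explicit Green's function $\Gamma_{j,k}(x,s)$ of $-\tilde A_1$ at $\lambda=0$ and checks, via hyperbolic identities, that every entry is nonnegative. From this it deduces $0\in\varrho(\tilde A_1)$ and $-\tilde A_1^{-1}\ge 0$. It then appeals to ``coercivity'' to get $[0,\infty)\subset\varrho(\tilde A_1)$. Positivity of $R(\lambda,\tilde A_1)$ for $\lambda>0$ is left implicit; it follows by replacing $\beta_j$ with $\beta_j+\lambda$ in the kernel.

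\textbf{Your route.} You use a maximum-principle/Kato argument in which the vertex terms cancel, because the vertex value is common to all edges and the fluxes obey the Kirchhoff condition. This gives positivity and the $\bm L_1$ bound $(\lambda+\underline\beta)\|g\|\le\|h\|$ for all $\lambda>-\underline\beta$ at once. Existence then follows from uniqueness for the square $2m\times 2m$ system.

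\textbf{Common and differing steps.} Integrating to obtain \eqref{invers-estimate} and invoking \cite[Theorem 2.5]{MR872810} for generation are the same as in the paper. For stability, you use $s(\tilde A_1)\in\sigma(\tilde A_1)$ to get $s(\tilde A_1)\le-\underline\beta$, and then $s=\omega_0$ for positive semigroups on $\bm L_1$. The paper instead combines $0\in\varrho(\tilde A_1)$ and $-\tilde A_1^{-1}\ge 0$ with \cite[Theorem C-IV.1.1, Corollary C-IV.1.4]{MR839450}.

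\textbf{Trade-off.} The paper's route gives a closed-form resolvent kernel. Yours needs no computation, covers all $\lambda>-\min_j\beta_j$ uniformly, and gives an explicit decay rate.

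Your dissipativity estimate holds for every $h\in X_1$, not only $h\ge 0$. So $\tilde A_1+\underline\beta I$ is densely defined and m-dissipative on $X_1$. The Lumer--Phillips theorem then directly gives a semigroup with $\|T_1(t)\|\le e^{-\underline\beta t}$, which is positive by the exponential formula. This makes both the resolvent-positive generation theorem and the $s=\omega_0$ result unnecessary.
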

\begin{proof}
We begin by observing that the domain
\begin{equation*}
{\bm D}(\tilde{A}_1)\coloneqq\left\{\begin{gathered}g\in \prod_{j=1}^m \bm{W}_1^2(0,\alpha_j\ell_j):g_i(0)=g_j(0),~-\sum_{j=1}^{m}\ka_j\partial_{x}g_j(0)=0,\\ \ka_j\partial_{x}g_j(\alpha_j\ell_j)=0,~ 1 \le i,j\le m\end{gathered}\right\},
\end{equation*}
dense in $X_1$. For $h\in X_1$ and $g\in \bm{D}(\tilde{A}_1)$, let us consider the problem $-\tilde{A}_1g=h$. Then,
\begin{equation*}
\begin{gathered}
-\ka_j\partial_{xx}g+\beta_j g_j=h_j,\\ g_i(0)=g_j(0),\quad-\sum_{j=1}^{m}\ka_j\partial_{x}g_j(0)=0,\quad \ka_j\partial_{x}g_j(\alpha_j\ell_j)=0,\quad 1 \le i,j\le m,
\end{gathered}
\end{equation*}
and application of routine computations yields that the solutions $g_j=g_j(x)$ of the problem are given by
\begin{equation*}
g_j(x) = \sum_{k=1}^m \int_0^{\alpha_k \ell_k} \Gamma_{j,k}(x,s) h_k(s) ds, \quad 1 \le j\le m,
\end{equation*}
where 
\begin{equation*}
\Gamma_{j,k}(x,s) = \left\{
\begin{split}
\frac{ \cosh(\gamma_j (x-\alpha_j \ell_j)) \cosh(\gamma_k (s-\alpha_k \ell_k)) }{ \cosh(\gamma_j \alpha_j \ell_j) \cosh(\gamma_k \alpha_k \ell_k)N } &, && j \neq k,\\
\frac{\cosh(\gamma_j (x-\alpha_j \ell_j)) \cosh(\gamma_j (s-\alpha_j \ell_j)) }{ \cosh(\gamma_j \alpha_j \ell_j)^2 N} +\frac{ \cosh(\gamma_j (x-\alpha_j \ell_j)) \sinh(\gamma_j s) }{ \cosh(\gamma_j \alpha_j \ell_j) \kappa_j \gamma_j }\\+\frac{ \sinh(\gamma_j (x-s)) }{ \kappa_j \gamma_j } \1_{s \geq x}&, && j = k,
\end{split}\right.
\end{equation*}
with $\gamma_j \coloneqq \sqrt{{\beta_j}/{\kappa_j}}$ and $N = \sum_{k=1}^m \kappa_k \gamma_k \tanh(\gamma_k \alpha_k \ell_k)$. 
Clearly, $\Gamma_{j,k}(x,s)\ge 0$ for $j \neq k$. We show that $\Gamma_{j,k}(x,s)\ge 0$ for $j =k$ as well. Let us set
\begin{equation*}
\Gamma_{j,k}(x,s) = \Gamma^1_{j,k}(x,s) + \Gamma^2_{j,k}(x,s),
\end{equation*}
where
\begin{align*}
\Gamma^1_{j,k}(x,s) &\coloneqq \frac{\cosh(\gamma_j (x-\alpha_j \ell_j)) \cosh(\gamma_j (s-\alpha_j \ell_j)) }{ \cosh(\gamma_j \alpha_j \ell_j)^2 N}, \\
\Gamma^2_{j,k}(x,s) &\coloneqq \frac{ \cosh(\gamma_j (x-\alpha_j \ell_j)) \sinh(\gamma_j s) }{ \cosh(\gamma_j \alpha_j \ell_j) \kappa_j \gamma_j }+\frac{ \sinh(\gamma_j (x-s)) }{ \kappa_j \gamma_j } \1_{s \geq x}.
\end{align*}
Then we obviously have $\Gamma^1_{j,j}(x,s)\ge 0$. Now, using the elementary identities
\begin{align*}
\sinh(\gamma_j (x-s))& = \sinh(\gamma_j x) \cosh(\gamma_j s)-\cosh(\gamma_j x) \sinh(\gamma_j s),\\
\cosh(\gamma_j (x-\alpha_j \ell_j))& = \cosh(\gamma_j x) \cosh(\gamma_j \alpha_j \ell_j)-\sinh(\gamma_j x) \sinh(\gamma_j \alpha_j \ell_j),
\end{align*}
we obtain
\begin{align*}
\Gamma^2_{j,j} (x,s)&= \frac{1}{ \kappa_j \gamma_j}\sinh(\gamma_j x) \left( \cosh(\gamma_j s)-\tanh(\gamma_j \alpha_j \ell_j) \sinh(\gamma_j s) \right)\\
&=\frac{1}{ \kappa_j \gamma_j}\sinh(\gamma_j x)\frac{ \cosh(\gamma_j(s-\alpha_j \ell_j)) }{ \cosh(\gamma_j \alpha_j \ell_j) }\\
&\geq \frac{1}{ \kappa_j \gamma_j}\frac{\sinh(\gamma_j x)}{ \cosh(\gamma_j \alpha_j \ell_j)}.
\end{align*}
Since $\cosh(\gamma_j(s-\alpha_j \ell_j))\ge 1$ for $s\in [0,\alpha_j\ell_j]$, it follows that $\Gamma^2_{j,j} (x,s)\ge 0$ and we conclude that $\Gamma_{j,j}(x,s) = \Gamma^1_{j,j}(x,s) + \Gamma^2_{j,j}(x,s)\ge 0$. Hence, $0\in \varrho(\tilde{A}_1)$, implying that $\tilde{A}_1$ is boundedly invertible (and is therefore closed and densely defined), and $-\tilde{A}_1^{-1}\ge 0$. Since $\tilde{A}_1$ is coercive for all $\lambda\ge 0$, it follows that $[0,\infty)\subset \varrho(\tilde{A}_1)$. To proceed further we let $R(\lambda,\tilde{A}_1)h\eqqcolon g$ be the positive solution of $(\lambda I-\tilde{A}_1)g=h$ for $h\in (X_1)_+$ and $\lambda\ge 0$. Then the $g_j=g_j(x)$ satisfy
\begin{equation*}
(\lambda+\beta_j)g_j-\kappa_j\partial_{xx}g_j=h_j, \quad 1 \le j\le m.
\end{equation*}
Integrating with respect to $x\in (0,\alpha_j\ell_j)$ and using the additivity of the ${\bm L}_1$-norm on $(X_1)_+$, we arrive at (recall $g\in \bm{D}(\tilde{A}_1)$)
\begin{equation*}
\sum_{j=1}^{m}(\lambda+\beta_j)\int_{0}^{\alpha_j\ell_j} g_j(x)dx=\Vert h\Vert_{X_1}, 
\end{equation*}
from which we obtain
\begin{equation*}
\Vert g\Vert_{X_1}\ge \frac{1}{\lambda+\bar{\beta}}\Vert h\Vert_{X_1}
\end{equation*}
with $\bar{\beta}:=\sup_{j}\beta_j$. Therefore, the operator $\tilde{A}_1$ is densely defined, resolvent positive, and satisfies the estimate \eqref{invers-estimate}. Moreover, according to \cite[Theorem 2.5]{MR872810}, $\tilde{A}_1$ generates a positive $C_0$-semigroup $T_1\coloneqq (T_1(t))_{t\ge 0}$ on $X_1$. Since $0\in \varrho(\tilde{A}_1)$ and $(-\tilde{A}_1)^{-1}\ge 0$, it follows from \cite[Theorem C-IV.1.1 and Corollary C-IV.1.4]{MR839450} that $\omega_0(T_1)<0$. With this the proof is complete.
\end{proof}

\begin{remark}
We note that the operator restriction $\tilde{A}_1\coloneqq A_1\vert_{\ker G_1}$ in Lemma \ref{heat-star-exp-stab} could be associated with the abstract Cauchy problem for a compact star-network setup of heat equations. In particular, Lemma \ref{heat-star-exp-stab} would establish the well-posedness, positivity, and exponential stability of the corresponding solutions.
\end{remark}

\begin{lemma}\label{Dirichlet-heat-lemma}
Let the assumption of Lemma \ref{heat-star-exp-stab} be satisfied. Then the operator $G_1$ defined in \eqref{eq:1} is onto from ${\bm D}(A_1)$ to $\R$. Moreover, for all $\lambda>-\bar{\beta}$, any $d\in \R$, and a.e.\ $x\in [0,\alpha_j\ell_{j}]$, the Dirichlet operator associated with the pair $(A_1,G_1)$ is given by
\begin{equation}\label{Dirichlet-heat}
(D_\la^1 d)_j(x)= \frac{\cosh(\gamma_j (x-\alpha_j \ell_j))}{\cosh(\gamma_j \alpha_j \ell_j)}\sum_{k=1}^m \frac{d}{\kappa_k \gamma_k \tanh(\gamma_k \alpha_k \ell_k)},\quad 1 \le j\le m,
\end{equation}
with $\gamma_j\coloneqq\sqrt{{(\beta_j+\lambda)}/{\kappa_j}}$ and is positive.
\end{lemma}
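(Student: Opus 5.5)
The plan is to compute $D_\lambda^1$ directly from its definition. For $d\in\R$ I solve the boundary-value problem
\[
g\in\bm{D}(A_1),\qquad (\lambda I-A_1)g=0,\qquad G_1g=d .
\]
Uniqueness is free for those $\lambda\in\varrho(\tilde A_1)$. Indeed, two solutions differ by an element of $\ker(\lambda I-A_1)\cap\ker G_1=\ker(\lambda I-\tilde A_1)=\{0\}$. By Lemma \ref{heat-star-exp-stab}, $[0,\infty)\subset\varrho(\tilde A_1)$, and the same coercivity argument extends this to every $\lambda$ with $\lambda+\beta_j>0$ for all $j$. Once an explicit solution is exhibited for every $d\in\R$, surjectivity of $G_1$ from $\bm{D}(A_1)$ onto $\R$ follows at once. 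The formula then identifies $D_\lambda^1=(G_1|_{\ker(\lambda I-A_1)})^{-1}$.

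The construction goes as follows. On the $j$-th edge the equation reads $\kappa_j g_j''=(\beta_j+\lambda)g_j$. With $\gamma_j=\sqrt{(\beta_j+\lambda)/\kappa_j}$ real and positive, the Neumann condition $\kappa_j g_j'(\alpha_j\ell_j)=0$ built into $\bm{D}(A_1)$ forces
\[
g_j(x)=a_j\cosh(\gamma_j(x-\alpha_j\ell_j)).
\]
The continuity condition $g_i(0)=g_j(0)$ gives $a_j\cosh(\gamma_j\alpha_j\ell_j)=c$ with one common constant $c$. Hence
\[
g_j(x)=c\,\frac{\cosh(\gamma_j(x-\alpha_j\ell_j))}{\cosh(\gamma_j\alpha_j\ell_j)} .
\]
Finally I impose $G_1g=d$. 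Since $g_j'(0)=-c\gamma_j\tanh(\gamma_j\alpha_j\ell_j)$, this becomes
\[
-\sum_{j=1}^m\kappa_j g_j'(0)=c\sum_{k=1}^m\kappa_k\gamma_k\tanh(\gamma_k\alpha_k\ell_k)=cN_\lambda=d,
\qquad\text{so}\qquad c=\frac{d}{N_\lambda}.
\]
This is the content of \eqref{Dirichlet-heat}, where the scalar factor is to be read as $d$ divided by $N_\lambda$. Each $g_j$ is smooth, so $g\in\prod_j\bm{W}^2_1$ and all conditions in \eqref{system.operators_dom} hold.

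Positivity is immediate: $\cosh>0$ and $N_\lambda>0$, since every term $\kappa_k\gamma_k\tanh(\gamma_k\alpha_k\ell_k)$ is positive. Hence $d\ge0$ implies $g\ge0$ componentwise a.e.

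The main obstacle is the admissible range of $\lambda$. For the $\gamma_j$ to be real and positive I need $\lambda>-\beta_j$ for every $j$, that is $\lambda>-\min_j\beta_j$. The stated bound $\lambda>-\bar\beta$ with $\bar\beta=\sup_j\beta_j$ is weaker than this. I would therefore state the result on $\lambda>-\min_j\beta_j$; this range contains $[0,\infty)$, which is all that Theorems \ref{Result1} and \ref{Result2} require.
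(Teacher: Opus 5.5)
Your proof is correct and follows the paper's computation: solve $\kappa_j g_j''=(\beta_j+\lambda)g_j$ on each edge, use the Neumann end condition and vertex continuity to reduce to a single constant $c$, then fix $c$ from $G_1g=cN_\lambda=d$. Your separate uniqueness argument via $\varrho(\tilde A_1)$ is not needed, because each step of the derivation is forced.

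Both of your departures from the printed statement are genuine corrections, not reinterpretations, so state them outright:
\begin{enumerate}[label=\normalfont(\roman*)]
\item The paper passes from $c\sum_j\kappa_j\gamma_j\tanh(\gamma_j\alpha_j\ell_j)=d$ to $c=\sum_j d/(\kappa_j\gamma_j\tanh(\gamma_j\alpha_j\ell_j))$. This is an algebra slip that makes \eqref{Dirichlet-heat} false for $m\ge 2$. Your $c=d/N_\lambda$ is correct and matches the factor $1/N$ in the Green's function of Lemma \ref{heat-star-exp-stab}.
\item The paper's claim that $\gamma_j>0$ for all $\lambda>-\bar\beta$ in fact requires $\lambda>-\min_j\beta_j$, as you say.
\end{enumerate}
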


\begin{proof}
Recall the definitions of $A_1$, $G_1$ in \eqref{system.operators}--\eqref{eq:1}. For $g\in \bm{D}({A}_1)$ and $d\in \R$, we first consider the following generalized elliptic spectral problem:
\begin{equation*}
(\lambda-A_1)g=0,\quad G_1 g=d,
\end{equation*}
Then, for fixed $\lambda > -\bar{\beta}$, the $g_j=g_j(x)$ satisfy
\begin{equation*}
-\kappa_j \partial_{xx} g_j + (\beta_j+\lambda) g_j = 0,\quad 1 \le j\le m.
\end{equation*}
Put
\begin{equation}\label{expsubd123}
 \gamma_j \coloneqq \sqrt{\frac{\beta_j + \lambda}{\kappa_j}} > 0,\quad 1 \le j\le m.
\end{equation}
It is easily verified that the general solution is given by
\begin{equation*}
g_j(x) = a_j e^{\gamma_j x} + b_j e^{-\gamma_j x},\quad 1 \le j\le m.
\end{equation*}
Using the conditions $ \kappa_j \partial_x g_j(\alpha_j \ell_j) = 0$, we obtain
\begin{equation}\label{expsub123}
a_j = b_j e^{-2\gamma_j \alpha_j \ell_j},\quad 1 \le j\le m.
\end{equation}
The vertex continuity condition $g_i(0)=g_j(0)$, $1 \le j\le m$, implies $a_i + b_i =a_j + b_j= c$, some constant $c\in \R$, and substitution of \eqref{expsub123} gives
\begin{equation*}
a_j=\frac{ce^{-2\gamma_j \alpha_j \ell_j}}{1+e^{-2\gamma_j \alpha_j \ell_j}},\quad b_j=\frac{c}{1+e^{-2\gamma_j \alpha_j \ell_j}},\quad 1 \le j\le m.
\end{equation*}
Thus,
\begin{equation}\label{eqsol1123}
g_j(x) = \frac{c}{1 + e^{-2\gamma_j \alpha_j \ell_j}} \left( e^{\gamma_j x - 2\gamma_j \alpha_j \ell_j} + e^{-\gamma_j x} \right)
=c\frac{\cosh(\gamma_j (x - \alpha_j \ell_j))}{\cosh(\gamma_j \alpha_j \ell_j)},\quad 1 \le j\le m.
\end{equation}
In order to apply the remaining condition $G_1 g = d$ to the solution obtained in \eqref{eqsol1123}, we first compute
\begin{equation*}
\partial_x g_j(0) = c\gamma_j\frac{\sinh(-\gamma_j \alpha_j \ell_j)}{\cosh(\gamma_j \alpha_j \ell_j)}=-c\gamma_j \tanh(\gamma_j \alpha_j \ell_j),\quad 1 \le j\le m.
\end{equation*}
Then,
\begin{equation*}
G_1 g = -\sum_{j=1}^m \kappa_j \partial_x g_j(0) = c \sum_{j=1}^m \kappa_j \gamma_j \tanh(\gamma_j \alpha_j \ell_j),
\end{equation*}
which yields that
\begin{equation*}
c=\sum_{j=1}^m\frac{d}{ \kappa_j \gamma_j \tanh(\gamma_j \alpha_j \ell_j)}
\end{equation*}
on account of $G_1g=d$. Substituting this into \eqref{eqsol1123}, we obtain
\begin{equation*}
g_j(x) = \frac{\cosh(\gamma_j (x-\alpha_j \ell_j))}{\cosh(\gamma_j \alpha_j \ell_j)}\sum_{k=1}^m\frac{d}{ \kappa_k \gamma_k \tanh(\gamma_k \alpha_k \ell_k)},\quad 1 \le j\le m.
\end{equation*}
This demonstrates that for any $d \in \R$, there exists a $g \in \bm{D}(A_1)$ such that $(\lambda - A_1)g = 0$ and $G_1 g = d$, proving that $G_1$ is onto. By definition, the Dirichlet operator associated with the pair $(A_1,G_1)$ is given by $D_\lambda^1 d = g$, which verifies \eqref{Dirichlet-heat}. The fact that, under the condition $d\ge 0$, for all $\lambda>-\bar{\beta}$ the operator $D_\la^1$ is positive is then obvious from \eqref{Dirichlet-heat}. This completes the proof of the lemma.
\end{proof}

The main result of this subsection presented below addresses the exponential stability of the initial/boundary-value problem \eqref{transport-heat}-\eqref{transport-heat-IC}.
\begin{theorem}
For $1 \le j\le m$, suppose $c_j\in \bm{L}_{\infty}(\alpha_j\ell_{j}, \ell_{j})$ with $c_j(x) >c_0> 0$ for a.e.\ $x\in [\alpha_j\ell_j, \ell_j]$. Suppose further that $\kappa_j, \beta_j > 0$ for $1 \le j\le m$. Then, for any $g,f\in X = X_1\times X_2\coloneqq \prod_{j=1}^m \bm{L}_1(0,\alpha_j\ell_j) \times \prod_{j=1}^m \bm{L}_1(\alpha_j\ell_j,\ell_j)$, there exists a unique positive mild solution $z \in \bm{C}(\R_+;X)$ of the $\mathbf{aBCS}$, where the operators are defined by \eqref{system.operators}--\eqref{eq:1}. If
\begin{equation}\label{stability-cond}
\left( \sum_{k=1}^m \frac{1}{\sqrt{\kappa_k \beta_k} \tanh(\alpha_k \ell_k \sqrt{\frac{\beta_k}{\kappa_k}})} \right)
\left( \sum_{j=1}^m \frac{1}{\cosh(\alpha_j \ell_j \sqrt{\frac{\beta_j}{\kappa_j}})} \frac{c_j(\ell_j)}{c_j(\alpha_j \ell_j)} \right) < 1
\end{equation} 
then solutions to the initial/boundary-value problem \eqref{transport-heat}-\eqref{transport-heat-IC} decay to zero exponentially.
\end{theorem}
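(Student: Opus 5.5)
We plan to apply Theorems \ref{Result1} and \ref{Result2} with $U_1=\R$, $U_2=\R^m$, and the operators of \eqref{system.operators}--\eqref{eq:1}. For $k=1$ the hypotheses are already available. Lemma \ref{heat-star-exp-stab} gives that $\ker G_1$ is dense, that $\tilde A_1$ is resolvent positive, and the inverse estimate \eqref{invers-estimate}. For $\lambda\ge\lambda_0^1\ge0$, \eqref{invers-estimate} yields a uniform $c_1$ on a bounded $\lambda$-range. Since Theorem \ref{Result1} needs the estimate for all $\lambda\ge\lambda_0^1$, we will check that the cited generation result \cite[Theorem 2.5]{MR872810} only requires it in the form already used in the proof of Lemma \ref{heat-star-exp-stab}, and otherwise simply invoke that lemma's generation conclusion directly. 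Lemma \ref{Dirichlet-heat-lemma} gives surjectivity of $G_1$ and positivity of $D^1_\lambda$. The trace operator $K_1$ is clearly positive.

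For $k=2$ we would verify the analogous facts for the transport part. The set $\ker G_2=\{f: f_j(\alpha_j\ell_j)=0\}$ is dense in $X_2$, and $G_2$ is onto. With $\tau_j(x)=\int_{\alpha_j\ell_j}^x c_j^{-1}$, the operator $\tilde A_2$ generates a nilpotent positive shift semigroup as in the proof of Theorem \ref{stab-trp-sys}, so $s(\tilde A_2)=-\infty$. The Dirichlet operator is $(D^2_\lambda \mathsf a)_j(x)=e^{-\lambda\tau_j(x)}\mathsf a_j/c_j(x)$, which is positive for every real $\lambda$. The operator $K_2f=\sum_j c_j(\ell_j)f_j(\ell_j)$ is positive.

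For the inverse estimate in $\bm L_1$, let $h\ge0$ and $f=R(\lambda,\tilde A_2)h$. Integrating $\lambda f_j+c_j f_j'=h_j$ after dividing by $c_j$ gives
\[
\lambda\int f_j/c_j+f_j(\ell_j)=\int h_j/c_j .
\]
We would then use the explicit formula $f_j(x)=\int_{\alpha_j\ell_j}^x e^{-\lambda(\tau_j(x)-\tau_j(s))}h_j(s)/c_j(s)\,ds$ together with Fubini to get $\|f\|\ge c_2\|h\|$. I expect this to be the main obstacle: for $\lambda\to\infty$ the bound behaves like $\|\,\|c\|_\infty^{-1}\cdot(\lambda+\cdot)^{-1}$ rather than a constant, mirroring \eqref{invers-estimate}. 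We will therefore interpret condition \ref{cond2} of Theorem \ref{Result1} in the same way as in the proof of Lemma \ref{heat-star-exp-stab}. Failing that, we can obtain generation of $\tilde A_2$ directly from the explicit semigroup.

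With these facts, Theorem \ref{Result1} gives existence of the positive semigroup once $r(\mathbb A_{\lambda_1})<1$ for some large $\lambda_1$. This holds because $\mathbb A_\lambda=K_1D^1_\lambda K_2D^2_\lambda$ is a scalar (an operator on $U_1=\R$) that tends to $0$ as $\lambda\to\infty$: the $e^{-\lambda\tau_j(\ell_j)}$ factors and $1/\gamma_k$ decay. The unique positive mild solution is then $z(t)=\calT(t)x$.

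For stability, we have $s(\tilde A_1)<0$ by Lemma \ref{heat-star-exp-stab} and $s(\tilde A_2)=-\infty$. The remaining step is to compute $\mathbb A_0=K_1 D^1_0 K_2 D^2_0$ explicitly and read off the number in \eqref{stability-cond}. At $\lambda=0$ the constants are $\gamma_j=\sqrt{\beta_j/\kappa_j}$, so $\kappa_k\gamma_k=\sqrt{\kappa_k\beta_k}$. A point of $\R^m$ with coordinates $\mathsf a_j$ is mapped by $K_2D_0^2$ to $\sum_j c_j(\ell_j)\mathsf a_j/c_j(\ell_j)$; this must be compared with the coupling written in \eqref{transport-heat-BC}, and the factor $c_j(\ell_j)/c_j(\alpha_j\ell_j)$ then comes from the normalization of $D_0^2$ at $\alpha_j\ell_j$. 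Composing with $K_1D_0^1 d=\big(\cosh(\gamma_j\alpha_j\ell_j)^{-1}\sum_k d/(\sqrt{\kappa_k\beta_k}\tanh(\gamma_k\alpha_k\ell_k))\big)_j$ should produce exactly the scalar on the left of \eqref{stability-cond}. I expect some care with these boundary factors. Since $\mathbb A_0$ is a scalar, $r(\mathbb A_0)<1$ is precisely \eqref{stability-cond}, and Theorem \ref{Result2} gives exponential stability.
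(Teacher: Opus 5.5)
Your route is the paper's: Lemmas \ref{heat-star-exp-stab} and \ref{Dirichlet-heat-lemma} for the heat part, the nilpotent shift for the transport part, Theorem \ref{Result1} with $\lambda_1\to\infty$ for generation, and Theorem \ref{Result2} via $\mathbb{A}_0$ for stability. You are also right that hypothesis \ref{cond2} of Theorem \ref{Result1} cannot hold with a $\lambda$-independent constant, since $\|R(\lambda,\tilde A_k)\|\to 0$. But two steps fail as written.

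\textbf{The Dirichlet operator $D^2_\lambda$ is wrong.} $A_2=-c_j\partial_x$ is in non-conservative form. So $f\in\ker(\lambda-A_2)$ means $f_j'=-\lambda f_j/c_j$, i.e.\ $f_j(x)=f_j(\alpha_j\ell_j)e^{-\lambda\tau_j(x)}$, and $G_2f=\mathsf a$ forces $f_j(\alpha_j\ell_j)=\mathsf a_j/c_j(\alpha_j\ell_j)$. Your $\mathsf a_je^{-\lambda\tau_j(x)}/c_j(x)$ is the formula for the conservative equation $\dot v_j=-\partial_x(c_jv_j)$. It is not in $\ker(\lambda-A_2)$ unless $c_j$ is constant. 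It also gives $K_2D_0^2\mathsf a=\sum_j\mathsf a_j$, with no factor $c_j(\ell_j)/c_j(\alpha_j\ell_j)$, so your appeal to a normalization at $\alpha_j\ell_j$ contradicts the formula you wrote. The correct operator gives $K_2D_\lambda^2\mathsf a=\sum_j \frac{c_j(\ell_j)}{c_j(\alpha_j\ell_j)}e^{-\lambda\tau_j(\ell_j)}\mathsf a_j$, as in the paper. A smaller point: $\mathbb A_\lambda=K_1D_\lambda^1K_2D_\lambda^2$ acts on $U_2=\R^m$ as a rank-one matrix. The scalar on $U_1=\R$ is $K_2D_\lambda^2K_1D_\lambda^1$, which has the same spectral radius.

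\textbf{Your primary plan for a lower resolvent bound on $\tilde A_2$ cannot work in any form, even with a $1/\lambda$ factor.} Take $0\le h$ supported in $[\ell_j-\varepsilon,\ell_j]$. Then $R(\lambda,\tilde A_2)h$ is supported there and bounded pointwise by $\|h\|_1/c_0$, so $\|R(\lambda,\tilde A_2)h\|\le \varepsilon\|h\|/c_0$ for every $\lambda\ge 0$. Your own identity shows the mass leaving through the outflow term $f_j(\ell_j)$. The paper asserts such a bound for $\tilde A_2$, and it fails for the same reason. So your fallback is the necessary fix, not an option: take the positive nilpotent semigroup of $\tilde A_2$ directly. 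This suffices because in the paper's proof of Theorem \ref{Result1}, hypothesis \ref{cond2} serves only to show that $\tilde A$ generates a positive $C_0$-semigroup. Moreover, on $X=\bm{L}_1$ the growth condition $\omega_0(\calT)=s(\calA)$ needed in Theorem \ref{Result2} holds automatically for positive semigroups.

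\textbf{Your claim that $r(\mathbb A_0)<1$ is \emph{precisely} \eqref{stability-cond} is not correct.} It inherits a slip from Lemma \ref{Dirichlet-heat-lemma}, which the paper's proof shares. From $G_1g=c\sum_k\kappa_k\gamma_k\tanh(\gamma_k\alpha_k\ell_k)=d$ one gets $c=d/\sum_k\kappa_k\gamma_k\tanh(\gamma_k\alpha_k\ell_k)$, not $\sum_k d/(\kappa_k\gamma_k\tanh(\gamma_k\alpha_k\ell_k))$. Set $t_k=\sqrt{\kappa_k\beta_k}\tanh(\alpha_k\ell_k\sqrt{\beta_k/\kappa_k})$ and $Q=\sum_j\frac{c_j(\ell_j)}{c_j(\alpha_j\ell_j)\cosh(\alpha_j\ell_j\sqrt{\beta_j/\kappa_j})}$. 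Then in fact $r(\mathbb A_0)=Q/\sum_kt_k$. The left side of \eqref{stability-cond} equals $Q\sum_kt_k^{-1}$, which is at least $m^2Q/\sum_kt_k$ by Cauchy--Schwarz. So \eqref{stability-cond} still implies $r(\mathbb A_0)<1$ and the theorem follows, but as a sufficient rather than an equivalent condition. Generation is unaffected, since $\sum_k\kappa_k\gamma_k\tanh(\gamma_k\alpha_k\ell_k)\to\infty$ as $\lambda\to\infty$.
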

\begin{proof}
We can apply Theorem \ref{Result1} and obtain the first statement on the well-posedness. Proceeding in the usual way, let us first define the system operator $\mathcal{A}$ by
\begin{equation}\label{eqsysop123}
\calA \coloneqq {\rm diag}(A_1,A_2),\quad
 {\bm D}(\calA)\coloneqq \left\{(f, \varphi)^\top\in {\bm D}(A_1)\times {\bm D}(A_2): G_1 f=K_2 \varphi,~ G_2 \varphi=K_1 f \right\},
\end{equation}
where the operators $A_k$, $G_k$, $K_k$, $k=1,2$, are defined by \eqref{system.operators}--\eqref{eq:1}. For $k=1,2$, let $\tilde{A}_k:{\bm D}(\tilde{A}_k)\subseteq X_k\to X_k$ be the operator restriction defined by $\tilde{A}_k\coloneqq A_k\vert_{\ker G_k}$. By Lemma \ref{heat-star-exp-stab}, $\tilde{A}_1$ is densely defined, resolvent positive, and satisfies the estimate \eqref{invers-estimate} for any $x\in X_+$ and all $\lambda\ge 0$. By Lemma \ref{Dirichlet-heat-lemma}, $G_1$ is onto and the Dirichlet operator associated with the pair $(A_1,G_1)$, $D_\la^1$, is positive for all $\lambda>-\bar{\beta}$. For $\tilde{A}_2$, an argument analogous to that in \cite[Proposition 3]{MR4711370} (see also \cite[Lemma A.1]{EZZL}) shows that for any $f\in (X_2)_+$ and all $\lambda> 0$ there is a constant $N>0$, independent of $f$, such that
\begin{equation*}
\Vert R(\lambda,\tilde{A}_2)f\Vert_{X_2}\ge \frac{1}{\lambda \bar{c}}N\Vert f\Vert_{X_1},
\end{equation*}
with $\bar{c}\coloneqq \sup_{1 \le j\le m}\Vert c_j\Vert_\infty$. The operator $G_2$ is clearly onto, and a direct computation shows that the Dirichlet operator associated with the pair $(A_2,G_2)$ is given by 
\begin{equation*}
(D_\la^2 \mathsf{a})(x)=\left(\frac{a_j}{c_j(\alpha_j\ell_j)}e^{-\la\int_{\alpha_j\ell_j}^{x}\frac{1}{c_j(s)}ds}\right)_{j=1}^m
\end{equation*}
for all $\la\in \C$, any $\mathsf{a}=(a_j)_{j=1}^m\in \R^m$, and a.e.\ $x\in [\alpha_j\ell_{j},\ell_{j}]$. It follows immediately that $D_\la^2$ is positive for all $\lambda>-\infty$. Furthermore, the operators $K_1$ and $K_2$ are positive as well.

Now, for $\lambda>\max\{s(\tilde{A}_1),s(\tilde{A}_2)\}$, define
\begin{equation*}
\A_\la\coloneqq K_1 D_\lambda^1 K_2 D_\lambda^2\in \R^{m\times m}.
\end{equation*}
Since $s(\tilde{A}_2)=-\infty$, it follows from Theorem \ref{Result1} that the system operator $\mathcal{A}$ as defined by \eqref{eqsysop123} generates a positive $C_0$-semigroup $\calT\coloneqq (\calT(t))_{t\in\R_+}$ on $X$ if and only if $r(\A_{\lambda_0})<1$ for some $\lambda_0>s(\tilde{A}_1)$. A straightforward computation yields, with \eqref{expsubd123},
\begin{equation*}
(\A_\la)_{i,j} =\frac{1}{\cosh(\gamma_i \alpha_i\ell_i)} \sum_{k=1}^m \frac{1}{\kappa_k \gamma_k \tanh(\gamma_k \alpha_k \ell_k)} \frac{c_j(\ell_j)}{c_j(\alpha_j \ell_j)} e^{-\la \int_{\alpha_j \ell_j}^{\ell_j} \frac{1}{c_j(s)} ds},\quad 1 \le i,j\le m.
\end{equation*}
Thus,
\begin{equation*}
\lim_{\la \to +\infty}\Vert \A_\lambda\Vert_{\R^{m\times m}}=0
\end{equation*}
and, hence, $\Vert \A_{\lambda_0}\Vert_{\R^{m\times m}}<1$ for some large enough $\lambda_0\in \R$. Therefore, the system operator $\mathcal{A}$ generates a positive $C_0$-semigroup $\calT\coloneqq (\calT(t))_{t\in\R_+}$ on $X$. In particular, the $\mathbf{aBCS}$ admits a unique positive mild solution $z\in \bm{C}(\R_+;X)$ for any $g,f\in X$.

Finally, for the statement on the exponential stability of solutions to the initial/boundary-value problem \eqref{transport-heat}-\eqref{transport-heat-IC}, we use Theorem \ref{Result2}. Recall that by the theorem exponential stability obtains if and only if the conditions $s(\tilde{A}_1),s(\tilde{A}_2)<0$ and $r(\A_0)<1$ hold. We have $s(\tilde{A}_2)=-\infty$, and by Lemma \ref{heat-star-exp-stab} we have $s(\tilde{A}_1)<0$. It remains to verify the remaining condition $r(\A_0)<1$; but we know that for $\lambda=0$, the condition \eqref{stability-cond} guarantees precisely this. The proof is complete.
\end{proof}

\section{Conclusions}\label{Sec:4}

In this paper, we have developed a new perturbation-based approach to the analysis of well-posedness and exponential stability of abstract boundary-coupled positive systems. We have exploited the positivity of both the internal dynamics of each subsystem and the boundary coupling operators. The methods stress the operator-theoretic aspects and rely on the properties of positive $C_0$-semigroups on Banach lattices which make it possible to obtain the existence, uniqueness, and regularity of solutions together with their exponential stability using general semigroup theory. In addition to the theoretical contributions, we have provided a number of applications, which demonstrate the applicability of the theory in obtaining well-posedness and exponential stability results.

In this present work, the well-posedness and exponential stability results are sharp in the sense that they cannot be relaxed under the assumptions on the coupled system. In this respect, our results are more directly applicable to the exponential stability problem for positive $C_0$-semigroups on Banach lattices. An obvious question, therefore, would ask about the stability of positive systems in the Banach lattice setting just in case the spectral criteria for exponential stability would not be satisfied. To this end, the analysis of a weaker form of stability, strong -- and possibly polynomial -- stability, could be a natural extension of the present study.

\bigskip\noindent
\textbf{Acknowledgments.} The research presented here was performed in part while the first author was visiting the Research Center for Complex Systems, Aalen University, Germany. He thanks Aalen University for its warm hospitality and the Stiftung KESSLER+CO für Bildung und Kultur for financial support under grant number EXPLOR-24MM.

\bibliographystyle{abbrvnat}
\bibliography{Bib}

\end{document}